\documentclass[smallextended,envcountsame]{svjour3}
\smartqed

\usepackage{amsmath,amssymb}
\usepackage{paralist}
\usepackage[T1]{fontenc}
\usepackage{microtype}
\usepackage{hyperref}
\usepackage{tikz,pgfplots}
\usepackage{graphicx}
\usepackage{subcaption}
\usepackage{enumitem}
\usepackage{nicefrac}
\usepackage{macros_perso}

\journalname{Set-Valued Var.\ Anal.}

\title{A new comparison principle for discrete Volterra equations with an application to convex sweeping processes with infinite delays}
\titlerunning{Sweeping processes with infinite delays}
\author{Thierno Mamadou Bald\'e \and Vuk Milisic \and Steffen Plunder}
\authorrunning{T. M. Bald\'e, V. Milisic, S. Plunder}
\institute{T.\,M.~Bald\'e \at Laboratoire de Math\'ematiques de Bretagne Atlantique,
Univ Brest, CNRS UMR 6205,
6, avenue Victor Le Gorgeu,
29200 Brest, France
\and V.~Milisic \at Laboratoire de Math\'ematiques de Bretagne Atlantique,
Univ Brest, CNRS UMR 6205,
6, avenue Victor Le Gorgeu,
29200 Brest, France
\and S.~Plunder \at Institute for Advanced Study of Human Biology (WPI-ASHBi), Kyoto University, Kyoto, 606-8303, Japan}
\newcommand{\kernel}{\varrho}
\newcommand{\tkernel}{\tilde{\varrho}}
\newcommand{\tp}{\tilde{p}}
\renewcommand{\tr}{R}
\newcommand{\tP}{\tilde{P}}
\newcommand{\tf}{\tilde{f}}
\DeclareMathOperator*{\argmin}{argmin}

\begin{document}

\maketitle

\begin{abstract}
Comparison principles for Volterra equations play a role analogous to maximum principles in PDEs: 
they provide positivity and stability information on the solution and allow one to control
the output of bounded inputs. In the continuous setting, such results often rely on Laplace-transform 
or spectral methods (see e.g. Gripenberg et al. \cite[Ch. 4. \& 7.]{grip}). However, these tools are not 
uniform in the discretization step  $h$ hence fail in discrete or semi-discrete approximations. 
The present note introduces a resolvent-free argument yielding uniform $L^\infty(0,T)$-bounds
for non-negative kernels.

Compactness is a key ingredient in order to show existence of sweeping processes.
While in the classical framework it is well established, adding an infinite 
distribution of delays complicates greatly the obtaining of such a result.
In a first step we show a general energy decay estimate, which is then used to
establish compactness. The argument is carried out in the discrete setting
and that necessitates the introduction of the  new comparison principle.

In the classical sweeping process the previous position of the particle
lies on the boundary of the constraint set, staying $O(h)$
close to the next projection point ($h$ is the discretization step). Our delay model projects the particle's
averaged (by a unit measure kernel) past positions  to the constraint set.
Numerical simulations show that the projected point can lie at $O(1)$
distance from the convex set's boundary.
\end{abstract}

\tableofcontents

\section{Introduction}

Volterra equations arise naturally in the modeling of systems with memory, where the present state depends
on the past history through a convolution kernel. In this work, we provide tools for analyzing a novel type of Volterra 
equations which is motivated by a problem from modelling of multi-cellular systems with adhesive memory and non-overlap 
conditions (see \cite{venel08} for the classical differential inclusion setting). 

Cells with adhesive memory have been modelled extensively using Volterra-type equations where the memory 
represents the forces of adhesion to past positions; 
see e.g.~\cite{OelzSch10,PreVi,Manhart2015,MiOel.1,MiOel.2,MiOel.3,MiOel.4} and Figure~\ref{fig.cells.example} (left). 
The system modelling the quasi-steady state of cells at each point in time reads 
\begin{align*}
  z(t) - \int_0^t \kernel(t-s) z(s)\,ds = f(t), \quad t > 0,
\end{align*}
where $z(t)$ represents the position of the cell at time $t$, $\kernel$ is a 
nonnegative kernel describing the adhesive memory, and $f(t)$ is an external force acting on the cell.

\begin{figure}[h]
	\centering
	\includegraphics[width=0.5\textwidth]{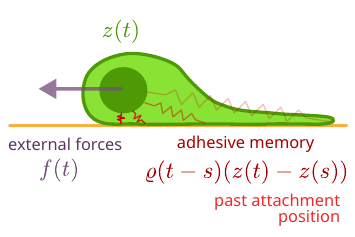}
	\includegraphics[width=0.4\textwidth,trim=60 60 60 60,clip]{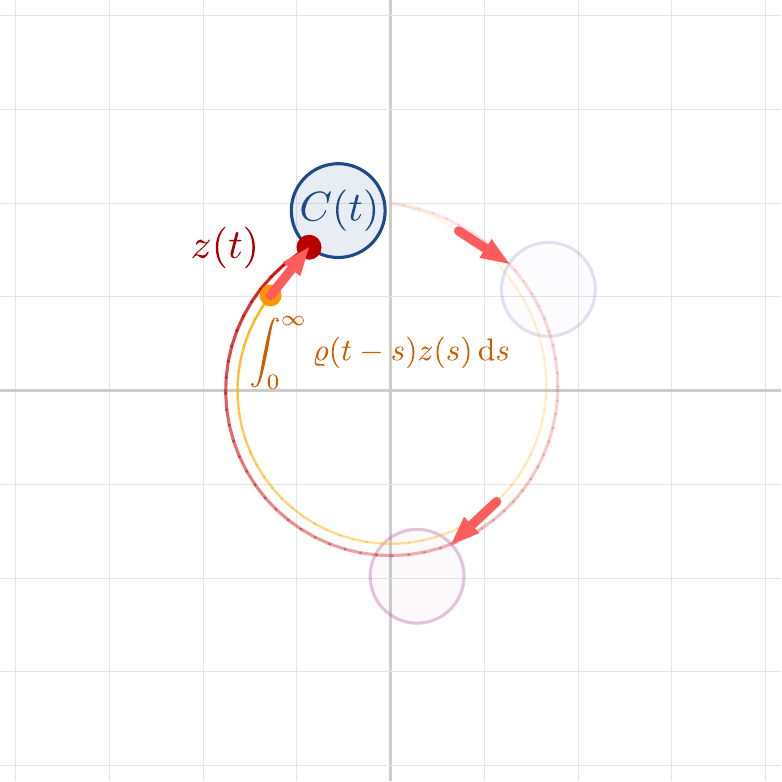}
	\caption{(left) Adhesive memory cell dynamics. (right) Dynamics of a constraint set moving in circles. The projection distance (red arrow) is not infinitesimal, which is an inherent property of the system and not a time-stepping artifact.}
	\label{fig.cells.example}
\end{figure}

When multiple cells are considered, one might want to impose non-overlapping constraints between cells, effectively constraining the state to a feasible set $C(t)$ at each time~$t$ (see Figure~\ref{fig.cells.example}, right, for an example with a moving circular constraint).
This leads to a new Volterra-type sweeping process: find $z(t) \in C(t)$ for $t>0$, such that
$$
z(t) - \int_0^t \kernel(t-s) z(s)\,ds - f(t) \in -N_{C(t)}(z(t)),\quad t > 0,
$$
where $C(t)$ can be for instance the 
interior convex approximation of the non-overlapping constraint \cite{venel08}
and simplified to a time-dependent convex set and $N_{C(t)}(z(t))$ is the normal cone to $C(t)$ at $z(t)$.

As the movement of the set $C(t)$ might be much faster than the natural dynamics of the unconstrained 
system, we experience here a challenging geometric setup. This becomes visible when the dynamics are 
recast as a projective system, such as 
\begin{align*}
  z(t) = P_{C(t)}\Big( \int_0^t \kernel	(t-s) z(s)\,ds + f(t) \Big), \quad t > 0.
\end{align*}
Here the projection distance depends crucially on the kernel and on the past trajectory $z(t)$.
This shows the fundamental difference with  classical sweeping processes. Indeed, for Moreau's model,
  the projection distance is infinitesimal due to the presence of a time-derivative term.
Dealing with non-infinitesimal projection distances (as illustrated in Figure~\ref{fig.cells.example} (right)) requires new bounds on the solution $z(t)$ in order to 
establish stability and compactness. Often  stability results 
are based on some kind of Gronwall's lemma (for ODEs for instance) or more generally on comparison principles.
For Volterra equations these are not straightforward to obtain, this is due to the non-local nature of the convolution operator
and from the physical perspective, the memory effect implies that the solution at time $t$ depends on the entire past history,
and can exhibit oscillations or growth that are not present in local differential equations.
Comparison principles for such equations play a 
role analogous to maximum principles in parabolic PDEs: they ensure that bounded or positive data produce 
bounded or positive solutions, and they provide essential \emph{a~priori} bounds for qualitative and numerical analysis.
 
In the continuous setting, stability results can often be obtained  thanks to
the concept of a {\em resolvent}  associated with the Volterra kernel. This is a special function $r$ that solves:
\[r(t) - \int_0^t \kernel	(t-s) r(s)\,ds = \kernel(t), \qquad t > 0.\]
Using the Laplace transform  provides a spectral
decomposition of $r$  as a sum of exponentials and a remainder term that
is often integrable or decays in a sufficient manner \cite[Ch.~4 \& 7]{grip}. 
However, in many applications—especially when the kernel is not exponentially decaying or when it depends on 
the discretization parameter—such a spectral decomposition is not directly available. Moreover, 
even if Wiener’s lemma can be invoked to provide a similar asymptotic description of the discrete resolvent, 
the corresponding representation is not uniform with respect to the discretization step $h$ 
and therefore does not provide uniform \emph{a~priori} bounds for the discrete solutions.

Classical convergence and stability analyses of discrete Volterra equations, 
such as those of McKee and Jones~\cite{JonesMcKee1982}, Bakke and Jackiewicz~\cite{BakkeJackiewicz1988}, 
and Lubich~\cite{Lu.88.1,Lu.88.2}, rely on spectral or Laplace-transform techniques that 
require exponentially decaying kernels or  j whose Laplace transform is well-behaved. 
These approaches provide powerful tools 
for these kernels but fail to yield uniform bounds when the memory kernel is not explicit 
or lacks analyticity. The present work develops a comparison principle that 
circumvents the use of any spectral decomposition and applies equally to continuous and 
discrete settings. The key novelty lies in the construction of an {\em initial layer} 
corrector compensating errors introduced by the tails of the leading order part of the super-solution.
Applying the Volterra operator to the complete ansatz (leading order plus initial layer corrector) is then able to 
dominate the right-hand side of the Volterra equation, 
whose solution we aim to control, thanks to a comparison principle specific to Volterra equations \cite{grip,MiOel.1}.
We underline that the comparison principle found in~\cite[Theorem~2.7]{MiOel.1} does not hold in the case of a 
non-exponential kernel that is constant in time.

Moreau's sweeping process (classical formulation) 
is the baseline model: a first-order differential inclusion 
  $-\dot X(t)\in N_{C(t)}(X(t))$ introduced by J.-J. Moreau \cite{Moreau1999}.
  Delay and history-dependent perturbations have been studied since: 
  existence results for sweeping processes with delays and history operators were 
  established in works by Castaing \& Monteiro Marques \cite{Castaing1997} and 
  Edmond \cite{Edmond2006}.
 Most results for delayed sweeping processes keep a time-derivative
 term (i.e. explicit $\dot X$ or its fractional analogue) in the formulation; 
 the analysis relies on this differential feature to obtain compactness and pass-to-the-limit.
Fractional-time derivative sweeping processes (Caputo-type) have been developed recently; 
  see e.g. Zeng \cite{Zeng2023} and Bouach \cite{Bouach2025}, 
  by a fractional derivative and yield existence/uniqueness 
   and—importantly—compactness properties coming from fractional regularizing effects.
By contrast, replacing the time-derivative by a general Volterra (integro-differential / history) 
operator leads to different analytic difficulties: compactness is generally lost unless very 
strong hypotheses are imposed. 
Recent papers prove well-posedness for Volterra-type integro-differential 
sweeping processes using tailored a priori estimates; 
see Vilches \cite{Vilches2024}, Godoy et al. \cite{Godoy2024}, and Haddad \cite{Haddad2025}.
State-dependent (moving set depends on $x$) and prox-regular (non-convex) extensions have been 
developed for both delay and fractional setups; proofs typically combine prox-regular geometry 
with fixed-point or reparametrization techniques.
Numerical and applied analyses (contact, frictionless impact, plasticity) 
motivate the integro-differential and fractional variants.

Our work contributes to this recent line of research by establishing a new comparison principle
for discrete integral Volterra equations with non-exponential non-negative kernels. 
To be more precise, our problem does not involve a time-derivative term,
rather it is replaced by a memory operator involving an infinite distribution of delays.
This operator has been introduced in \cite{OelzSch10} to account
for a microscopic friction mechanism mediated by transient elastic linkages to past positions.
Biological considerations motivate this model, as cells adhere to their past positions
through transient bonds that form and break over time.
In order to show existence of sweeping processes relying on  such principles, 
we use minimizing movements à la De Giorgi. The key step is to obtain
uniform $H^1$-bounds on the discrete solutions, which then provides
compactness by Ascoli-Arzelà's theorem. The new comparison principle
allows one to obtain such uniform bounds in the discrete setting.
This is crucial for passing to the limit and establishing existence of solutions.
To our knowledge, there is no such result in the literature yet,
even for the case of the simplest sweeping process where the moving convex set
is a sphere.

In order to illustrate the originality of our model, we present in Figures \ref{fig.simulations} 
a numerical simulation of the delayed sweeping process with a moving circular constraint following a Lissajous curve. 
In orange we plot the trajectory of the mean position mediated by linkages.
The projection distance (red arrow) is not infinitesimal, which is an inherent property of the system and not a 
time-stepping artifact. The red curve is the trajectory of the projection of the mean position onto the constraint set.
In order to show how complex the dynamics can be, we present in Figure
\ref{fig.simulation2} a simulation of the same system with a non-circular, rotating, stadium shaped constraint.
We plot in blue the classical sweeping process with a time-derivative term, and in red the delayed sweeping process.
\begin{figure}[h]
	\centering
	\begin{minipage}{0.45\textwidth}
		\centering
		\includegraphics[width=\textwidth,trim=60 60 60 60,clip]{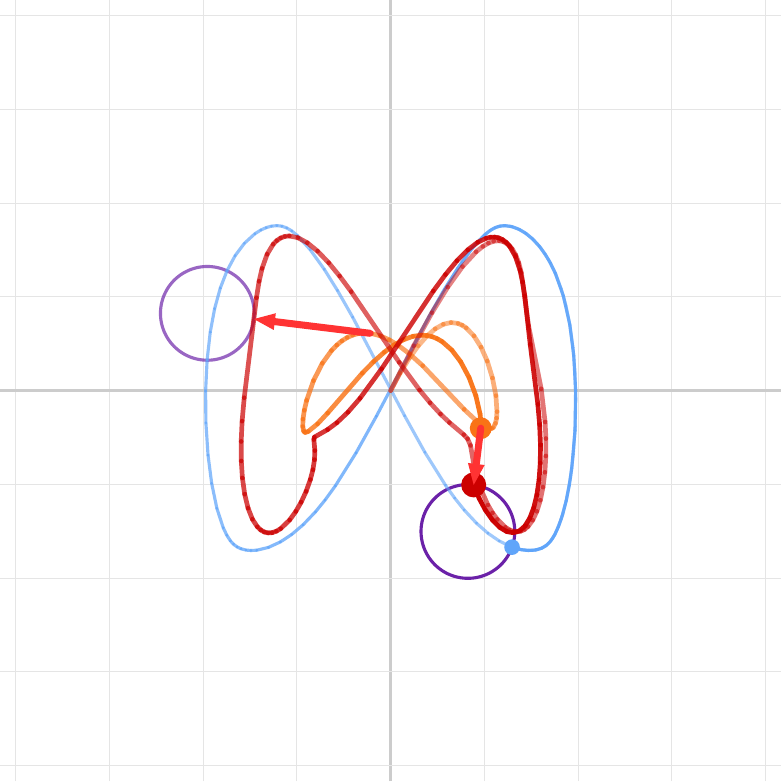}
		\subcaption{Simulation of the delayed sweeping process with a moving circular constraint following a 
		Lissajous curve.}
		\label{fig.simulation1}
	\end{minipage}
	\hfill
	\begin{minipage}{0.45\textwidth}
		\centering
		\includegraphics[width=\textwidth,trim=60 60 60 60,clip]{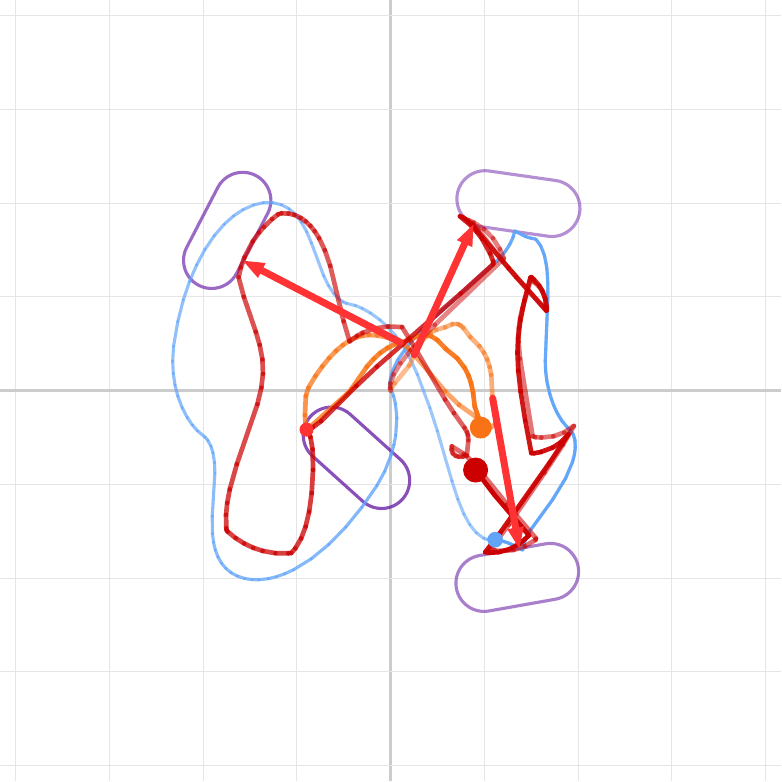}
		\subcaption{Simulation of the delayed sweeping process with a moving stadium-shaped constraint 
		following a Lissajous curve.}
		\label{fig.simulation2}
	\end{minipage}
	\caption{Simulations of the delayed sweeping process with moving constraints. The projection distance (red arrow) is not infinitesimal, which is an inherent property of the system and not a time-stepping artifact.}\label{fig.simulations}
\end{figure}

The paper is organized as follows. 
In Section \ref{sec:assumptions}, we gather all the assumptions and state the main results.
Section~\ref{sec:continuous} recalls the 
continuous framework and basic assumptions ensuring the existence of a unique solution. 
In Section~\ref{sec:initiallayer}, we study a canonical auxiliary problem that reveals 
the structure of the \emph{initial layer} associated with the convolution kernel. 
Building on this analysis, Section~\ref{sec:comparison} establishes a new comparison 
principle valid for non-exponential kernels. 
Section~\ref{sec:discrete} introduces a discrete counterpart of the continuous model, 
derives a discrete conservation relation, and proves uniform $L^\infty$ bounds independent 
of the discretization step $h$. Section~\ref{sec:application} applies these results to a 
delayed sweeping process, establishing energy estimates and compactness properties needed 
for existence results. Then by standard arguments one passes to the limit in the discrete
approximations and recovers an existence result for the continuous delayed sweeping process.
Section~\ref{sec:numerics} describes the numerical algorithm for computing the discrete sweeping sequence,
including the projection via Newton's method, and presents numerical examples.
Finally, we detail conclusions and perspectives of this work. 
The Appendices \ref{sec:moments},  \ref{sec:discrete_convolution} and \ref{sec:decay_lipschitz} contain auxiliary technical results.

\section{Assumptions and main results}\label{sec:assumptions}

In this section, we gather the assumptions used 
throughout the paper and state the main results.
Assumptions concerning the Volterra equations that
we consider are stated hereafter.
\begin{hypo}[Data assumptions]\label{hypo.data}
Throughout this paper, we assume the following conditions on the data:
	\begin{compactenum}[i)]
		\item the source term  $f \in L^\infty (\rr)$,  
		\item the kernel $\kernel$ is non-negative unit measure and satisfies $\kernel \in L^1(\rr,(1+a)^2)\cap \Lip(\rr)$,
		\item the kernel satisfies the monotonicity condition: $\da \kernel (a) \leq 0$,
		\item there exists a constant $\zeta > 0$ such that $- \da \kernel(a) / \kernel(a) \leq \zeta$ for all $a>0$,
		\item the past condition reads: $\zp \in \Lip (\RR^-)$.
	\end{compactenum}
\end{hypo}

In the rest of the paper the renormalization of the kernel is made for
the sake of conciseness and all the results hold for generic non-negative
integrable kernels.
\renewcommand{\tkernel}{\kernel}

\noindent For the sweeping process, we require some regularity on the 
moving convex sets:
\begin{hypo}[Convex sets assumptions]\label{hypo.convex.main}
For the application to delayed sweeping processes, we consider a family of 
closed convex sets $(C^n)_{n\in\N}$ in $\mathbb{R}^d$, satisfying:
\begin{compactenum}[i)]
	\item  there exists $R>0$ such that for all $n\in\N$, $C^n \subset B(0,R)$,
	\item compactness of the sequence: for all $h<h_0$, 
	setting $N=\lfloor T/h \rfloor$, the sequence of convex sets $(C^n)_{0\leq n \leq N}$ satisfies the $H^1$-like estimate:
	\begin{equation}\label{eq.bv.c.main}
\sum_{n=1}^N d_H(C^{n},C^{n-1})^2/h \leq C < \infty,
\end{equation}
where the constant $C$ is uniform with respect to $h$,
\end{compactenum}
\end{hypo}

\subsection{Comparison principles for Volterra equations: continuous and discrete settings}
In a first step, and for the sake of clarity, we show the 
theoretical construction leading to the comparison principle 
in the continuous setting.
\begin{theorem}[Comparison principle for continuous Volterra equations]\label{thm.main.comparison}
	Assume that $z$ solves:
	\begin{equation}\label{eq.z.main}
		\left\{\begin{aligned}
			&	 \int_\rr (z(t)-z(t- a)) \kernel(a) da = f(t), & t>0 \\
			&	z(t) = \zp(t), & t<0,
		\end{aligned}\right.
	\end{equation}
	under Hypothesis \ref{hypo.data}. Then $z \in L^\infty (0,T)$ for any finite $T$, and the bound is independent of the spectral decomposition of the resolvent.
\end{theorem}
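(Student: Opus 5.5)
The plan is to rewrite \eqref{eq.z.main} as a renewal equation on $(0,\infty)$, build an explicit affine super-solution, and conclude with a positivity (comparison) argument for the Volterra operator $\mathcal{L}u := u - \kernel * u$. Nothing spectral is used, so the resulting bound cannot depend on the resolvent's spectral decomposition.

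\emph{Step 1 (reduction).} Since $\kernel$ has unit mass, splitting the integral at $a=t$ gives
\[
z(t) - \int_0^t \kernel(t-s) z(s)\,ds = F(t), \qquad F(t):= f(t) + \int_t^\infty \kernel(a)\, \zp(t-a)\,da .
\]
As $\zp\in\Lip(\RR^-)$, we have $|\zp(s)|\le |\zp(0)| + L|s|$. Combined with $\kernel\in L^1(\rr,(1+a)^2)$, this gives
\[
\Big|\int_t^\infty \kernel(a)\zp(t-a)\,da\Big| \le \int_0^\infty \kernel(a)\big(|\zp(0)| + L a\big)\,da <\infty
\]
uniformly in $t$. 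Hence $F\in L^\infty(\rr)$, with $\|F\|_\infty\le \|f\|_\infty + C(\zp,\kernel)$.

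\emph{Step 2 (positivity of $\mathcal{L}^{-1}$ on $[0,T]$).} The kernel $\kernel$ is Lipschitz and integrable, hence bounded, say $\kernel\le M$. For $w\in L^\infty(0,T)$, the Picard iteration $w=g+\kernel*w$ converges: the $k$-fold convolution satisfies $\kernel^{*k}(t)\le M^k t^{k-1}/(k-1)!$. Consequently $w=\sum_{k\ge0}\kernel^{*k}*g$ is the unique solution, which also gives existence and uniqueness of $z$. Since $\kernel\ge 0$, it follows that $g\ge 0$ implies $w\ge 0$. This yields the comparison principle: if $\mathcal{L}u\ge |F|$, then $\mathcal{L}(u\mp z)\ge 0$, so $|z|\le u$ on $[0,T]$.

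\emph{Step 3 (super-solution: leading order plus initial layer).} This is the main step. Because $\int\kernel=1$, the problem is critical: constants are not strict super-solutions, and the natural growth is linear in $t$. I take the ansatz $u(t)=A+Bt$. A direct computation gives
\[
\mathcal{L}u(t) = A\int_t^\infty\kernel(a)\,da + B\Big( t\int_t^\infty \kernel(a)\,da + \int_0^t a\kernel(a)\,da\Big) \ge A\,\Phi(t) + B\,\Psi(t),
\]
where $\Phi(t)=\int_t^\infty\kernel$ is nonincreasing with $\Phi(0)=1$, and $\Psi(t)=\int_0^t a\kernel$ is nondecreasing with $\Psi(0)=0$.

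The linear part $Bt$ is the leading-order term. It fails only in an initial layer near $t=0$, where $\Psi$ is small. The constant $A$ acts as the initial layer corrector there, while its own tail $A\Phi(t)$ decays.

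To choose the constants, fix $t_0>0$ with $\Phi(t_0)\ge 1/2$ (possible by continuity of $\Phi$ at $0$). Then for $t\le t_0$ we have $\mathcal{L}u\ge A/2$. For $t\ge t_0$ we have $\mathcal{L}u\ge B\Psi(t_0)$, and $\Psi(t_0)>0$ unless $\kernel$ vanishes on $(0,t_0)$. That degenerate case is excluded by the monotonicity assumption iii) together with unit mass, which force $\kernel(0)>0$. Choosing $A=2\|F\|_\infty$ and $B=\|F\|_\infty/\Psi(t_0)$ gives $\mathcal{L}u\ge\|F\|_\infty$ on $(0,\infty)$.

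\emph{Conclusion.} Step 2 then gives
\[
|z(t)|\le \|F\|_\infty\big(2 + t/\Psi(t_0)\big), \qquad t\in[0,T],
\]
which is the claimed $L^\infty(0,T)$ bound. It depends only on $\|f\|_\infty$, $\zp$, the moments of $\kernel$ and $T$.

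The delicate point is the uniform lower bound in Step 3, i.e.\ making the tail of the leading part and the decay of the corrector overlap. In the discrete setting this is where the work is: one needs the analogous quantities $\sum_{j>n}\kernel_j h$ and $\sum_{j\le n} jh\,\kernel_j h$ to be bounded below uniformly in $h$, and this is where iii)--iv) would be used.
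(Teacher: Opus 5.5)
Your proof is correct, and it takes a genuinely different and lighter route than the paper's. Both arguments combine a super-solution with positivity of the resolvent.

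The paper keeps the leading-order profile at the critical slope $\|f\|_\infty/\mu_1$ and adds the initial-layer corrector $\frac{\|f\|_\infty}{\mu_1}w$, where $w-\varrho*w=\int_t^\infty(a-t)\varrho(a)\,da$. Most of its effort goes into showing that $w$ is bounded and non-negative (Theorem~\ref{thm.w}). That uses the elongation/transport reformulation, the second moment of $\varrho$, the ratio bound iv), and resolvent positivity quoted from \cite{grip} under $\int_0^t\varrho<1$.

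Your exact identity $\mathcal{L}(A+Bt)=A\Phi(t)+B\bigl(t\Phi(t)+\Psi(t)\bigr)$ shows why the paper needs $w$. At the critical slope $B=\|F\|_\infty/\mu_1$, the deficit with respect to $\|F\|_\infty$ is $B\int_t^\infty(a-t)\varrho(a)\,da$. A constant term $A\Phi(t)$ absorbs it only if $\int_t^\infty(a-t)\varrho/\int_t^\infty\varrho$ stays bounded, which fails for $\varrho\propto(1+a)^{-4}$ (the paper's remark). With your supercritical slope $\|F\|_\infty/\Psi(t_0)$, the linear part alone handles $t\ge t_0$ and the constant only has to cover $[0,t_0]$, so that ratio never enters.

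Your Neumann-series positivity for a bounded non-negative kernel is also more elementary than the citation, and needs no condition on $\int_0^t\varrho$. If $z$ is only assumed locally integrable, the same bound on $\varrho^{*k}$ gives uniqueness in $L^1(0,T)$, so nothing is lost there.

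\emph{What each approach buys.} Your argument uses only $f\in L^\infty$, unit mass, a finite first moment, boundedness of $\varrho$ and the Lipschitz past. Even iii) can be dropped: take any $t_0$ with $0<\int_0^{t_0}\varrho<1$ and set $A=\|F\|_\infty/\Phi(t_0)$. In particular compactly supported kernels are covered, which the paper's remark says need a separate argument. In exchange, the paper's construction gives the sharp growth $\|f\|_\infty t/\mu_1$ (the renewal-theorem rate), whereas yours has the larger slope $\|F\|_\infty/\Psi(t_0)$.

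For the statement as written this difference is immaterial. Your Step 2 alone already gives $|z|\le\|F\|_\infty e^{MT}$ on $[0,T]$. What either super-solution really adds is growth that is linear rather than exponential in $T$.

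Finally, your closing remark about the discrete setting is too pessimistic. Since the $R_j$ are cell averages, $1-h\sum_{j\le n}R_j=\Phi((n+1)h)$ exactly, and $h\sum_{j\le n}jhR_j\ge\Psi((n+1)h)-h$. So the discrete analogue of your Step 3 is uniform in $h$ without using iii)--iv). It suffices to take $hR_0<1$, $h\le\Psi(t_0)/2$, and $B=2\|F\|_\infty/\Psi(t_0)$.
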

This result is made possible by introducing an initial layer corrector.
For the sake of clarity, technical aspects are further detailed in section \ref{sec:initiallayer}. 
\noindent Mimicking the continuous setting, we now turn to the discrete framework.
We discretize the time interval $[0,T]$ with a step $h>0$ and
 we denote $t_n:= n h$ for all $n \in \{0, \ldots, N\}$ where $N:= T/h$. 
 We assume for the sake of simplicity that $T/h \in \N$.
 Then we use piecewise constant approximations. 
 Within this framework, we state the following corresponding result.
\begin{theorem}[Uniform discrete comparison principle]\label{thm.comp.disc.main}
	Consider the discrete problem:
	\begin{equation}\label{eq.disc.z.main}
	\left\{\begin{aligned}
		&	Z^n - h \sum_{j\geq 0} Z^{n-j} \tr_j = f^n , & n \in \{0, \ldots, N\} \\
		&	Z^n = Z_p^n , & n<0,
	\end{aligned}\right.
	\end{equation}
	where $R_j:= \frac{1}{h} \int_{jh}^{(j+1)h} \kernel(a) da$.
	Assume that 
	Hypotheses \ref{hypo.data} holds. Then the solution $(Z^n)_{n\in \{0,\dots,N\}}$ of \eqref{eq.disc.z.main} is bounded uniformly with respect to $h$.
\end{theorem}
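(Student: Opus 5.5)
The plan is to reduce everything to a discrete \emph{positivity} statement and then build an explicit super-solution that is uniform in $h$, mirroring the continuous construction behind Theorem~\ref{thm.main.comparison}.

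\textbf{Step 1: rewriting the scheme and a discrete comparison lemma.} Since $\kernel$ has unit mass, $h\sum_{j\ge0}\tr_j=1$, so \eqref{eq.disc.z.main} reads $\mathcal L_h Z^n:=h\sum_{j\ge1}\tr_j\,(Z^n-Z^{n-j})=f^n$. By Hypothesis~\ref{hypo.data}\,iii), $h\tr_0\le h\kernel(0)<1$ for $h<h_0$. Hence
\[
Z^n=\frac{1}{1-h\tr_0}\Big(f^n+h\sum_{j\ge1}\tr_j Z^{n-j}\Big).
\]
This gives existence and uniqueness by induction on $n$. It also exhibits $Z^n$ as a convex combination of past values plus a source. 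From this I get the comparison lemma: if $\mathcal L_h W^n\ge \mathcal L_h Z^n$ for $0\le n\le N$ and $W^n\ge Z^n$ for $n<0$, then $W^n\ge Z^n$ for all $n$. Indeed, at a first index where this fails, $(1-h\tr_0)(W^n-Z^n)\ge h\sum_{j\ge1}\tr_j(W^{n-j}-Z^{n-j})\ge0$, a contradiction. Applying the lemma to $\pm Z$ reduces the theorem to constructing one super-solution $W$ with $\mathcal L_h W^n\ge\|f\|_\infty$, $W^n\ge|Z_p^n|$ for $n<0$, and $\sup_{n\le N}W^n$ bounded independently of $h$. Note that the naive bound from the convex-combination form only yields growth of order $N\|f\|_\infty=T\|f\|_\infty/h$. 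This is why a super-solution is needed.

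\textbf{Step 2: leading-order ansatz.} For $n\ge0$ I take $W^n_0=A+Bt_n$. For $n<0$ I take $W^n_0=A+L|t_n|$, where $A\ge|Z_p(0)|$ and $L$ is the Lipschitz constant of $Z_p$, so that the past condition holds. For $n\ge0$ split $\mathcal L_hW_0^n=B\,h\sum_{j\ge1}\tr_j t_j-E^n$. The first term is $B\mu_1^h$, a discrete first moment with $\mu_1^h\ge \mu_1/2>0$ for small $h$, because $\kernel\in L^1((1+a)^2)$. The tail error $E^n=(B+L)\,h\sum_{j>n}\tr_j(t_j-t_n)$ comes from the past part not being the linear extension. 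It satisfies $E^n\lesssim(B+L)\int_{t_n}^\infty(a-t_n)\kernel(a)\,da$. By the second-moment assumption, $h\sum_{n\ge0}E^n\le C(B+L)\int_0^\infty a^2\kernel(a)\,da$ uniformly in $h$. Choosing $B\mu_1^h\ge 2\|f\|_\infty$ handles $f$. It remains to absorb $E^n$.

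\textbf{Step 3: initial layer corrector (main obstacle).} I would add a nonnegative corrector $V^n$, vanishing for $n<0$, with $\mathcal L_hV^n\ge E^n$ and $\sup_nV^n\le C\sum_n hE^n$. My first attempt is the discrete analogue of the continuous initial layer of Section~\ref{sec:initiallayer}, namely $V^n=K\,h\sum_{k\le n}E^k/\mu_1^h$, a primitive of the defect. Then $\mathcal L_hV^n=K\,h\sum_{j\ge1}\tr_j\,h\sum_{n-j<k\le n}E^k/\mu_1^h$. Lower-bounding this by $E^n$ requires $E^k$ to be essentially nonincreasing in $k$ (true, since the tails of $\kernel$ decrease) and the memory weights near $j=1,\dots$ to be comparable to $\kernel(0)$. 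This is exactly where Hypothesis~\ref{hypo.data}\,iii)--iv) enter: iv) gives $\tr_j\ge e^{-\zeta h}\tr_{j-1}$-type bounds, hence $h\sum_{1\le j\le m}\tr_j\gtrsim \min(1,t_m)$ uniformly in $h$. If this direct bound is too lossy near $n=0$, I would instead take $V^n$ equal to a multiple of the discrete tail mass $h\sum_{j>n}\tr_j$ integrated once. I would then use the discrete convolution identities of Appendix~\ref{sec:discrete_convolution} to compute $\mathcal L_hV$ exactly. Once $\mathcal L_h(W_0+V)\ge\|f\|_\infty$ holds, Step~1 gives $|Z^n|\le A+BT+C(B+L)\int a^2\kernel$ for all $n\le N$, uniformly in $h$.
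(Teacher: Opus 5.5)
Your Steps 1 and 2 are sound. The induction-based comparison lemma is a more elementary substitute for the paper's resolvent-positivity argument, and $\mathcal L_hW_0^n=B\mu_{1,h}-E^n$ is computed correctly.

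The gap is Step 3, which carries the whole difficulty. It is left as a plan, and both of your concrete candidates fail. At $n=0$ every past value of $V$ vanishes, so $\mathcal L_hV^0=(1-hR_0)V^0$. On the other hand $E^0=(B+L)\,h\sum_{j\ge1}R_jt_j=(B+L)\mu_{1,h}$ is of order one. Any admissible corrector therefore needs $V^0\ge(B+L)\mu_{1,h}/(1-hR_0)$, i.e.\ an $O(1)$ jump at $n=0$.

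Both of your candidates are discrete primitives started at $n=0$, so $V^0=KhE^0/\mu_1^h$, resp.\ $V^0=Kh(1-hR_0)$. More generally $\mathcal L_hV^n\le K\,t_{n+1}E^0/\mu_1^h$ (resp.\ $\le K\,t_{n+1}$), while $E^n\ge E^0-(B+L)t_n$. Admissibility thus forces $K\gtrsim 1/h$, and then $V^N$ is of order $1/h$. This is not a lossy estimate but a genuine obstruction. It is exactly the initial layer: the continuous $w$ is not a primitive of the defect; it solves \eqref{eq.w} with the tail as source and has $w(0^+)=\mu_1\neq0$. (Also, Appendix~\ref{sec:discrete_convolution} contains no exact convolution identities, only an $L^p$ convergence result.)

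The paper takes $V=(B+L)\,W\mathbf 1_{\{n\ge0\}}$ with $W$ from \eqref{eq.disc.w}. Since $\mathcal L_hW^n=0$ for $n\ge0$ and $W^n=|t_n|$ for $n<0$, one gets $\mathcal L_hV^n=E^n$ exactly, and $V\ge0$. The substance is the $h$-uniform bound on $W$ (Proposition~\ref{prop.W.bounded}), obtained as follows:
\begin{itemize}
\item pass to the elongations $U^n_j=W^n-W^{n-j}$ and sum them in time;
\item subtract $h\sum_{k\le j}U^0_k$ to remove the source;
\item show that $h\sum_kR_k|\tilde P^n_k|$ is non-increasing in $n$;
\item recover $W$ from $W^{n+1}-W^n=-h\sum_{j\ge1}(R_j-R_{j-1})U^{n+1}_j$, using $R_{j-1}-R_j\le\zeta' hR_j$ and the second moment.
\end{itemize}
Your heuristic invocation of Hypotheses~\ref{hypo.data}~iii)--iv) does not substitute for this argument.

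For the statement as posed, on a fixed horizon, a much cheaper corrector does close your argument. Take $V^n=c\,\mathbf 1_{\{n\ge0\}}$. Then for $n\le N$, Hypothesis~\ref{hypo.data}~iv) gives
\[
\mathcal L_hV^n=c\int_{t_{n+1}}^\infty\varrho\ \ge\ c\,\varrho(0)e^{-\zeta(T+h)}/\zeta ,
\]
while $E^n\le(B+L)\mu_{1,h}$. So $c=(B+L)\mu_{1,h}\,\zeta e^{\zeta(T+h)}/\varrho(0)$ works. The price is a constant that may grow like $e^{\zeta T}$; this is the finite-horizon boundedness of $A[\varrho]$ from the remark after Theorem~\ref{thm.comparison}. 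The initial layer is what avoids dividing by the tail mass.
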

\noindent Again the key ingredient is the discrete initial layer corrector and its 
control (cf section \ref{subsec:discrete}).

\subsection{Energy estimates for delayed sweeping processes}

We formulate our sweeping process as the projection of the averaged 
past position onto the convex set:
$$
X^n:= P_{C^n} (\overline{X}^n), \quad 
\overline{X}^n :=  \frac{h \sum_{j\geq 1} X^{n-j} \tr_j}{h \sum_{j\geq 1} \tr_j}, \quad n \in \{0, \ldots, N\}.	
$$
where the kernel's weights $(\tr_j)_{j\in \N}$ are defined as above.
Then we show that this sweeping process can be equivalently reformulated as the following
delayed gradient flow \cite{Mi.5} and that it satisfies the energy dissipation estimates:
\begin{proposition}[Energy dissipation]\label{prop.nrj.main}
\begin{equation}\label{eq.sweeping.main}
	 X^n := \argmin_{W \in C^n} \cE_n(W), \quad \cE_n(W) := \frac{h}{2}\sum_{j\geq1} (W-X^{n-j})^2 \tr_j.
	\end{equation}
	Assume that $(\tr_j)_{j\in \N}$ is decreasing and Hypothesis \ref{hypo.convex.main} holds. 
	Then:	
	\[
	\cE_{N}(X^{N}) +  \sum_{n=0}^{N-1} D_n \le e^{2 T} \cE_{0}(X^0) + e^{2 T} \sum_{n=0}^{N-1} e(C^n,C^{n+1})^2/h,
	\]
	where $D_n= \frac{h}{2} \sum_{j\ge1} (\tr_{j}-\tr_{j+1}) | X^{n} - X^{n-j}|^2 \geq 0$ is the dissipation term.
\end{proposition}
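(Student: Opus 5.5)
We compare consecutive energies by testing the minimization defining $X^{n+1}$ against a well-chosen competitor, and then close with a discrete Gronwall argument. Set $W := P_{C^{n+1}}(X^n)$, which is admissible for $\cE_{n+1}$, with $|W - X^n| \le e(C^n,C^{n+1})$ since $X^n\in C^n$. By minimality, $\cE_{n+1}(X^{n+1}) \le \cE_{n+1}(W)$.

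The first step is to rewrite $\cE_{n+1}(X^n)$ by shifting the summation index. Since $X^{n+1-j}$ with $j\ge1$ equals $X^{n-(j-1)}$,
\begin{equation*}
\cE_{n+1}(X^n) = \frac h2 \sum_{j\ge1} |X^n - X^{n-j}|^2 \tr_{j+1} = \cE_n(X^n) - D_n ,
\end{equation*}
where the $j=1$ term vanishes and we used $\tr_j - \tr_{j+1} \ge 0$ (the decreasing assumption), so that $D_n\ge 0$. This identity is the source of the dissipation and is exactly where monotonicity of the weights enters.

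The second step handles the perturbation $W \ne X^n$. Using $|a+b|^2 \le (1+h)|a|^2 + (1+1/h)|b|^2$ termwise, with $a = X^n - X^{n+1-j}$ and $b = W - X^n$, we get
\begin{equation*}
\cE_{n+1}(W) \le (1+h)\,\cE_{n+1}(X^n) + \frac{1+h}{2h}\Big(h\sum_{j\ge1}\tr_j\Big) e(C^n,C^{n+1})^2 .
\end{equation*}
The factor $h\sum_{j\ge1}\tr_j \le \int_0^\infty \kernel = 1$, so for $h\le1$ the last term is at most $e(C^n,C^{n+1})^2/h$. Combining with the first step,
\begin{equation*}
\cE_{n+1}(X^{n+1}) + (1+h) D_n \le (1+h)\cE_n(X^n) + e(C^n,C^{n+1})^2/h .
\end{equation*}

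Finally, we drop the factor $1+h\ge1$ in front of $D_n$ and iterate, multiplying by $(1+h)^{-(n+1)}$ and summing over $n$ (discrete Gronwall). Using $(1+h)^N \le e^{T} \le e^{2T}$, this yields the claimed estimate. Boundedness of the $C^n$ (Hypothesis \ref{hypo.convex.main}) guarantees that all sums are finite, provided the past data $\zp$ is bounded.

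The main obstacle is the choice of the Young constant. A naive $(a+b)^2\le 2a^2+2b^2$ gives an exponential growth of order $2^N$, which is not uniform in $h$. The weight $1+h$ is what balances the $1/h$ in front of the displacement term and produces the exponential $e^{T}$ (or $e^{2T}$ with the slack kept) in the statement.
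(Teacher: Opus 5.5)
Your proof is correct and follows the paper's own route: the competitor $P_{C^{n+1}}(X^n)$, the index-shift identity $\cE_{n+1}(X^n)=\cE_n(X^n)-D_n$ (where monotonicity of the weights enters), and a discrete Gronwall iteration. The only difference is cosmetic. You apply $|a+b|^2\le(1+h)|a|^2+(1+1/h)|b|^2$ to the whole square, whereas the paper expands it and applies Young's inequality only to the cross term; this gives you the factor $1+h$ (hence $e^{T}$, at the price of the harmless restriction $h\le 1$) where the paper gets $1+2h$ (hence $e^{2T}$).
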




\subsection{Convergence to continuous delayed sweeping process}
Define the piecewise linear interpolant $X_h$ of the discrete solution $X^n$ on $(0,T)$.
In the same way, define the piecewise constant interpolant $\overline{X}_h$ of the discrete averages $\overline{X}^n$ on $(0,T)$.
Then, we have the following convergence result. 
\begin{theorem}[Limit of the discrete sweeping process]\label{thm:limit_sweeping.main}
	Under Hypotheses \ref{hypo.data} and \ref{hypo.convex.main}, assume the uniform estimate
	$ \| X_h \|_{H^1(0,T)}\le C$ for all $h>0$, for any finite $T>0$.
	
	Then, as $h\to0$, there exist $X\in H^1([0,T];\mathbb R^m)$ and a subsequence such that 
	$X_h \to X$ in $C^0 (0,T;\mathbb R^m)$, and the discrete averages converge 
	$\overline{X}_h \to \overline{X}$ in $L^p(0,T;\mathbb R^m)$ for $1\le p<\infty$, where
	\[
	\overline{X}(t) := \int_0^\infty \varrho(s)\,X(t-s)\,ds.
	\]
	Moreover, the limit function $X$ satisfies the Moreau-type integral inclusion:
	\begin{equation}\label{eq:limit_inclusion.main}
		\overline{X}(t)-X(t)\in N_{C(t)}\big(X(t)\big), \quad\text{for a.e. }t\in(0,T),
	\end{equation}
	equivalently, $X(t)=P_{C(t)}\big(\overline{X}(t)\big)$ for a.e. $t\in(0,T)$.
\end{theorem}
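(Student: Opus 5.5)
We prove Theorem~\ref{thm:limit_sweeping.main} by a standard compactness and identification argument. Throughout, the uniform bound $\|X_h\|_{H^1(0,T)}\le C$ is taken as given (it is assumed in the statement, and in practice supplied by Proposition~\ref{prop.nrj.main} together with Theorem~\ref{thm.comp.disc.main}).

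\textbf{Step 1: compactness of $X_h$.} The $H^1$ bound gives equi-Hölder continuity, $|X_h(t)-X_h(s)|\le C|t-s|^{1/2}$. Uniform boundedness follows from $X^n\in C^n\subset B(0,R)$, and the past datum $\zp$ is Lipschitz. By Ascoli--Arzelà we can extract a subsequence with $X_h\to X$ in $C^0([0,T])$. By weak compactness in $H^1$ we also get $X\in H^1$ and $\dot X_h\rightharpoonup \dot X$. We extend $X$ by $\zp$ for negative times.

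\textbf{Step 2: convergence of the averages.} For $t\in[t_n,t_{n+1})$ we rewrite
\[
\overline{X}_h(t)=\frac{\sum_{j\ge1}h\tr_j X^{n-j}}{h\sum_{j\ge1}\tr_j}=\frac{1}{1-\int_0^h\kernel}\int_h^\infty \kernel(a)\,\tilde X_h(t_n-a)\,da,
\]
where $\tilde X_h$ is the piecewise constant interpolant. The difference $|\tilde X_h-X|$ is bounded by $\|X_h-X\|_\infty+Ch^{1/2}$ on $[-T_0,T]$ for any finite $T_0$. On the far past, both functions coincide with (discretizations of) the Lipschitz datum $\zp$. The normalizing factor satisfies $1-\int_0^h\kernel\to1$, since $\kernel\in L^\infty$. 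Together these give $\overline X_h\to\overline X$ uniformly, hence in every $L^p$. One point needs care: $\zp$ may be unbounded at $-\infty$. This is handled by the moment condition $\kernel\in L^1((1+a)^2)$, because a Lipschitz $\zp$ grows at most linearly, so $\int\kernel(a)|\zp(t-a)|\,da<\infty$ and the tails are uniformly small.

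\textbf{Step 3: passage to the limit in the inclusion.} This is the main obstacle, because the sets $C^n$ move. By definition $X^n=P_{C^n}(\overline X^n)$. We interpret $C(t)$ as the Hausdorff limit of the piecewise constant family $C_h(t)=C^n$; Hypothesis~\ref{hypo.convex.main} gives Hölder-$1/2$ equicontinuity in $d_H$ by Cauchy--Schwarz, so this limit exists along a subsequence. We then use the projection estimate for convex sets,
\[
|P_{C}(x)-P_{D}(y)|\le |x-y|+ \bigl(2(|x|+|y|+2R)\,d_H(C,D)\bigr)^{1/2},
\]
which follows from the variational inequalities characterizing $P_C$ and $P_D$. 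Applying it with $C=C_h(t)$, $D=C(t)$, $x=\overline X_h(t)$, $y=\overline X(t)$ shows that $P_{C_h(t)}(\overline X_h(t))\to P_{C(t)}(\overline X(t))$ for a.e.\ $t$. Since the piecewise constant interpolant of $X^n$ also converges uniformly to $X$, we conclude $X(t)=P_{C(t)}(\overline X(t))$ for a.e.\ $t$.

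\textbf{Step 4: equivalence with the normal-cone form.} For a closed convex set, $X=P_C(\overline X)$ is equivalent to $\overline X-X\in N_C(X)$. This gives \eqref{eq:limit_inclusion.main}.
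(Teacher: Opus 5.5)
Your argument is correct. Step~1 matches the paper's compactness step, but Steps~2 and~3 take a different route.

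\textbf{Averages.} The paper applies the abstract Young-inequality lemma (Proposition~\ref{prop:discrete_convolution_convergence}) on $(0,T)$ and splits off the past contribution $X_p\int_t^\infty\varrho$. It obtains only $L^p$ convergence, and hence a.e.\ convergence along a subsequence. You instead write $\overline X_h$ exactly as $\bigl(1-\int_0^h\kernel\bigr)^{-1}\int_h^\infty\kernel(a)\,\tilde X_h(t_n-a)\,da$, using the backward convention $\tilde X_h=X^k$ on $(t_{k-1},t_k]$. This gives uniform convergence and treats explicitly the normalisation and the past tail, two points the lemma does not cover. If the past is incompatible, $X$ may jump at $t=0$, so your pointwise bound on $|\tilde X_h-X|$ fails on one cell of length $h$. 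It should be read in integrated form, where this only costs $O(h\|\kernel\|_\infty)$.

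\textbf{Inclusion.} The paper passes to the limit in a distance-function characterisation of the normal cone. It uses $|d_C(x)-d_D(y)|\le|x-y|+d_H(C,D)$ and Corollary~\ref{coro.lag} to keep $\overline X_h-\tilde X_h$ bounded. You use instead the joint continuity of $(C,x)\mapsto P_C(x)$. This does follow from the two variational inequalities, in the sharper form $|P_Cx-P_Dx|^2\le\bigl(d_C(x)+d_D(x)\bigr)\,d_H(C,D)$. It identifies $X(t)=P_{C(t)}(\overline X(t))$ directly, gives a rate, and avoids the energy estimate behind Corollary~\ref{coro.lag}.

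\textbf{The limit set $C(t)$.} The paper gets $C_h(t)\to C(t)$ from the $H^1$ regularity of the ball centres, so really only in the circular setting. You build $C(t)$ from Hypothesis~\ref{hypo.convex.main} itself, via $d_H(C^n,C^m)\le\sqrt{C\,|t_n-t_m|}$. To close this step you should:
\begin{itemize}
\item cite the Blaschke selection theorem (compactness of nonempty closed convex subsets of $\overline B(0,R)$ for $d_H$);
\item use an Ascoli argument that tolerates jumps of size $O(\sqrt h)$;
\item observe that when $C(\cdot)$ is prescribed with $C^n=C(nh)$ and is Hausdorff continuous, as for moving balls, your subsequential limit coincides with that $C(t)$.
\end{itemize}
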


\begin{remark}
	The general convergence result stated in Theorem \ref{thm:limit_sweeping.main} 
	requires an $H^1(0,T)$ bound on the discrete solution, which is difficult to 
	establish for general convex sets. The following result provides such a bound 
	for the special case of moving circular sets. In order to obtain these 
	uniform $H^1$ bounds, we crucially rely on the new comparison principle
	presented above.
\end{remark}

\begin{theorem}[Convergence for moving circular sets]\label{thm:circular.main}
	Consider the case where $C(t) := \overline{B}(c(t), R)$ with $c \in H^1([0,T];\mathbb{R}^m)$ and $R \in \mathbb{R}_+$.
	Discretize the sets as $C^n := C(nh)$ for $n=0,\dots,N$ with $N = \lfloor T/h \rfloor$.
	Assume that $(\tr_j)_{j\geq 0}$ is non-increasing with mass one, and that $X^{-j} = X^{-1} \in C_0$ for all $j\geq 2$.
	Moreover we suppose that the discrete kernel satisfies the following estimate:
	\begin{equation}\label{eq.kernel.estimate.circular}
		h \sum_{j\geq 1} \left( \frac{R_{j-1}-R_j}{h}\right)^2 \frac{1}{R_j} \leq C_R < \infty,
	\end{equation}
	where the constant $C$ is independent of $h$.

	Then the discrete sweeping sequence $(X^n)_{n\ge0}$ defined by \eqref{eq.sweeping} satisfies the uniform bounds:
	\[
	\sup_{n\in \{0,\dots,N\}}|X^n|\le C,\qquad
	\left\| \dt X_h \right\|^2_2 := \frac{1}{h}\sum_{n=0}^{N-1} |X^{n+1}-X^n|^2  \le C,
	\]
	where $X_{h}$ is the piecewise-linear interpolant of $X^n$ on $(0,T)$, and $C>0$ is independent of $h$.
	
	Consequently, by Theorem \ref{thm:limit_sweeping.main}, the discrete solutions converge to a continuous delayed sweeping process satisfying $X(t)=P_{C(t)}\big(\overline{X}(t)\big)$ for a.e. $t\in(0,T)$.
\end{theorem}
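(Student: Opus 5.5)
We aim at the two uniform bounds separately: first the $L^\infty$ bound on $X^n$, then the discrete $H^1$ bound. The $L^\infty$ bound is almost immediate. Since $X^n \in C^n = \overline{B}(c(t_n),R)$ and $c\in H^1(0,T)\subset C^0([0,T])$, we get $|X^n|\le \|c\|_{L^\infty(0,T)}+R$ uniformly in $h$; the initial data $X^{-j}=X^{-1}\in C_0$ are bounded as well.

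The real work is the $H^1$ bound. We exploit the circular geometry: for a ball the projection is explicit,
\[
X^n = c^n + R\,\frac{\overline{X}^n-c^n}{|\overline{X}^n-c^n|}\ \text{ if } \overline{X}^n\notin C^n,\qquad X^n=\overline{X}^n \text{ otherwise},
\]
where $c^n:=c(t_n)$. Set $Y^n:=X^n-c^n$, so that $|Y^n|\le R$ and $Y^n = P_{\overline{B}(0,R)}(\overline{X}^n - c^n)$. Projections are $1$-Lipschitz, which gives
\[
|Y^{n+1}-Y^n|\le |\overline{X}^{n+1}-\overline{X}^n| + |c^{n+1}-c^n|.
\]
Since $\sum_n |c^{n+1}-c^n|^2/h\le \|\dot c\|_{L^2}^2$, it suffices to control $\sum_n|\overline{X}^{n+1}-\overline{X}^n|^2/h$. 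By a summation by parts on the kernel (mass one, so the normalisation $h\sum_{j\ge1}R_j = 1-hR_0$ is uniformly close to $1$),
\[
\overline{X}^{n+1}-\overline{X}^n \simeq h\sum_{j\ge1} (R_{j-1}-R_j)\,(X^{n+1-j}-X^{n})
\]
up to a harmless $O(h)$ boundary term involving $R_0$. Cauchy--Schwarz with the weights $R_j$ then gives
\[
\frac{|\overline{X}^{n+1}-\overline{X}^n|^2}{h}\lesssim \Big(h\sum_{j\ge1}\Big(\frac{R_{j-1}-R_j}{h}\Big)^2\frac1{R_j}\Big)\,h\sum_{j\ge1} R_j\,|X^{n+1-j}-X^n|^2 \le C_R\, h\sum_{j\ge 1}R_j|X^{n+1-j}-X^n|^2,
\]
where the last step uses \eqref{eq.kernel.estimate.circular}. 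The right-hand side is essentially the energy $\cE$ evaluated at $X^n$. By the uniform $L^\infty$ bound and the unit mass, this energy is bounded by $C(\|c\|_\infty+R)^2$, but only pointwise in $n$; summing over $n$ would give an $O(N)=O(1/h)$ bound, which is too weak.

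Closing the sum is therefore the main obstacle. I would rewrite $|X^{n+1-j}-X^n|^2$ via telescoping increments and obtain a discrete Volterra inequality for $e^n:=|X^{n+1}-X^n|^2/h$ of the form
\[
e^n \le h\sum_{j\ge0} e^{n-j}\tilde R_j + g^n,
\]
with a nonnegative kernel $\tilde R$ of unit-type mass. Here $g^n$ collects the $c$-increments, which are summable by the $H^1$ regularity of $c$, together with the dissipation terms $D_n$, which are summable thanks to Proposition~\ref{prop.nrj.main} (using $e(C^n,C^{n+1})\le |c^{n+1}-c^n|$ for translated balls). Theorem~\ref{thm.comp.disc.main} then yields a bound on $e^n$ uniform in $h$, applied in the super-solution form of the comparison principle with positivity of the kernel. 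Summing against $h$ gives the $L^2$ bound on $\dt X_h$. The delicate point is to show that the loss coming from the kernel is exactly compensated by the discrete initial layer corrector, i.e.\ that the effective mass of $\tilde R$ does not exceed one. If that fails, I would instead use the dissipation $D_n$ directly: the weight $(R_j-R_{j+1})$ in $D_n$ is exactly what is paired with $(R_{j-1}-R_j)^2/R_j$ in the hypothesis.

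Finally, the $L^\infty$ and $H^1$ bounds verify the hypothesis of Theorem~\ref{thm:limit_sweeping.main}, which gives the convergence statement.
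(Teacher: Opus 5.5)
There is a genuine gap, but it comes from an arithmetic slip: you dropped a factor $h$ in your Cauchy--Schwarz step, and that is why you concluded the direct route fails. Since $h\sum_{j\ge1}R_j=1-hR_0$, the summation by parts gives exactly
\[
\overline{X}^{n+1}-\overline{X}^n=-\frac{h}{1-hR_0}\sum_{j\ge1}(R_{j-1}-R_j)\,(X^{n+1-j}-X^n)
=-\frac{h}{1-hR_0}\;h\sum_{j\ge1}\frac{R_{j-1}-R_j}{h}\,(X^{n+1-j}-X^n).
\]
Applying Cauchy--Schwarz to the last sum together with \eqref{eq.kernel.estimate.circular} gives
\[
\frac{|\overline{X}^{n+1}-\overline{X}^n|^2}{h}\le\frac{h\,C_R}{(1-hR_0)^2}\,h\sum_{j\ge1}R_j|X^{n+1-j}-X^n|^2\le\frac{4M^2C_R}{(1-hR_0)^2}\,h ,
\]
where $M:=\sup_{[0,T]}|c|+R$ bounds every $|X^k|$, $k\le N$, past values included. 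The right-hand side is $O(h)$, so summing over the $N=T/h$ steps gives $O(T)$, not $O(1/h)$. Combine this with your nonexpansiveness bound $|X^{n+1}-X^n|\le|\overline{X}^{n+1}-\overline{X}^n|+2|c^{n+1}-c^n|$ and with $\sum_n|c^{n+1}-c^n|^2/h\le\|\dot c\|_{L^2(0,T)}^2$. This already yields $\frac1h\sum_{n<N}|X^{n+1}-X^n|^2\le C(T,M,C_R,\|\dot c\|_{L^2})$, provided $h$ is small enough that $hR_0\le 1/2$. Here $R_0=\sum_{j\ge1}(R_{j-1}-R_j)\le\sqrt{C_R}$ by the same Cauchy--Schwarz argument. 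In fact monotonicity alone suffices: telescoping gives $|\overline{X}^{n+1}-\overline{X}^n|\le 2MR_0h/(1-hR_0)$.

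As written, however, you abandon this route and replace the conclusion with a plan that is not a proof and would not give the claimed estimate. The kernel $\tilde R$, its mass, and the way $D_n$ enters are all left open, and you yourself note that the mass condition might fail. More seriously, a uniform pointwise bound on $e^n=|X^{n+1}-X^n|^2/h$ only yields $\sum_{n<N}e^n\le CN=CT/h$. Moreover, ``summing against $h$'' produces $\sum_n|X^{n+1}-X^n|^2$, not $\|\partial_t X_h\|_2^2=\sum_n e^n$.

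The paper avoids exactly this problem by applying Theorem~\ref{thm.comp.disc.main} to the partial sums $z^n=\sum_{k\le n}y^k$, with $y^k=|X^{k+1}-X^k|^2/h$. Their source $c\sum_{k\le n}\omega_k$, where $\omega_k=|c^{k+1}-c^k|^2/h+h$, is bounded uniformly. The underlying Volterra inequality $y^n\le c\,\omega_n+h\sum_{j\ge0}R_jy^{n-j}$ comes from the KKT system: one tests with $X^{n+1}-X^n$ and performs a circle-specific sign analysis of the projection-step product.

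Once the factor $h$ is restored, your route is genuinely different from the paper's and considerably shorter. It needs neither the multipliers nor the comparison principle. It also only uses that $C^n$ is a translate of a fixed bounded convex set, so it is not tied to balls.
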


\begin{remark}
	The kernel condition \eqref{eq.kernel.estimate.circular} 
	is satisfied for a large class of kernels, including those 
	with algebraic decay such as $\kernel(a) = (1+a)^{-\alpha}$ for $\alpha > 0$.
\end{remark}

\section{The continuous setting}\label{sec:continuous} 

We start with the problem: find $z \in L^1_\loc(\rr)$ solving  
\begin{equation}\label{eq.z}
	\left\{\begin{aligned}
& 	\int_\rr (z(t)- z(t-a)) \kernel(a) da = f(t), & t>0 \\
& 	z(t) = \zp(t), & t<0,
	\end{aligned}\right.
\end{equation}
where the data satisfy Assumptions \ref{hypo.data}. 
By standard arguments:
\begin{theorem}
	Under assumptions \ref{hypo.data}, there exists a unique $z \in L^1_\loc(\rr)$ solving \eqref{eq.z}.
\end{theorem}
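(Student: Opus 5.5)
Because the kernel has unit mass, I will first rewrite \eqref{eq.z} as a Volterra equation of the second kind on $(0,\infty)$. For $t>0$ split the memory term at $a=t$:
\[
 z(t) - \int_0^t \kernel(t-s) z(s)\,ds = f(t) + \int_t^\infty \kernel(a)\,\zp(t-a)\,da =: g(t).
\]
Since $\zp\in\Lip(\RR^-)$ grows at most linearly and $\kernel\in L^1(\rr,(1+a)^2)$, we have $|g(t)|\le \|f\|_\infty + C\int_t^\infty (1+a)\kernel(a)\,da$. Hence $g\in L^\infty(\rr)$, and $g$ is continuous as soon as $f$ is. The problem is now $z-\kernel * z = g$ on $(0,\infty)$ with a bounded right-hand side.

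Existence and uniqueness in $L^1_\loc$ will come from a Picard/Banach fixed point on $[0,T]$. Consider the operator $\Phi(z):=g+\int_0^t\kernel(t-s)z(s)\,ds$ and equip $L^1(0,T)$ (or $L^\infty(0,T)$) with the weighted norm $\|z\|_\lambda:=\operatorname{ess\,sup}_t e^{-\lambda t}|z(t)|$, respectively $\int_0^T e^{-\lambda t}|z|\,dt$. One checks
\[
\|\Phi(z)-\Phi(w)\|_\lambda\le \Big(\int_0^\infty e^{-\lambda a}\kernel(a)\,da\Big)\|z-w\|_\lambda .
\]
The Laplace transform $\hat\kernel(\lambda)$ is strictly less than $1$ for every $\lambda>0$, because $\kernel\ge0$ has unit mass and is not concentrated at $0$. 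So $\Phi$ is a contraction for any $\lambda>0$. This gives a unique solution on each $[0,T]$, and by uniqueness these solutions patch together into a unique $z\in L^1_\loc(\rr)$. Uniqueness in the larger class $L^1_\loc$ follows in the same way: the difference of two solutions satisfies the homogeneous equation, and the $L^1$ weighted estimate forces it to vanish. Alternatively one can invoke the standard resolvent theory \cite[Ch.~2]{grip}, since $\kernel\in L^1_\loc$.

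The only delicate point is making sure the tail term $\int_t^\infty\kernel(a)\zp(t-a)\,da$ is finite and locally integrable. This is exactly where the moment assumption $\kernel\in L^1(\rr,(1+a)^2)$, together with the linear growth of a Lipschitz $\zp$, is used. No monotonicity of $\kernel$ (Hypothesis \ref{hypo.data}(iii)–(iv)) is needed for this step; those assumptions only enter the later uniform bounds.
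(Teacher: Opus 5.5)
Your argument is correct, and it reaches the statement by a different (though equally classical) route than the paper. The paper gives no argument of its own. It appeals to the resolvent theory of \cite{grip}: it constructs $r$ with $r-\kernel*r=\kernel$ and represents the solution as $z=f+r*f$.

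You instead fold the history into the forcing $g=f+\int_t^\infty\kernel(a)\zp(t-a)\,da$. You then obtain existence and uniqueness directly from a Picard iteration in the exponentially weighted norm, with contraction constant $\hat\kernel(\lambda)=\int_0^\infty e^{-\lambda a}\kernel(a)\,da<1$.

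This gives a self-contained proof that is explicit about the role of the past data. The paper's formula $z=f+r*f$ leaves the tail term implicit. Your estimate shows that only the first moment of $\kernel$ and the linear growth of $\zp$ are needed; the second moment and the monotonicity assumptions play no role.

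A further bonus is that your map $\Phi$ is order preserving because $\kernel\ge0$. Suppose $v=S-y$ satisfies $v\ge\kernel*v$ on $[0,T]$. Then $v^-\le\kernel*v^-$, and the same weighted estimate gives $v^-=0$. This is exactly the comparison step the paper later imports from \cite{grip} through positivity of the resolvent. Your argument gives it for any non-negative integrable kernel, without the side condition $\int_0^t\kernel<1$.

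What the resolvent route offers in exchange is the explicit representation $z=g+r*g$. That is useful if one wants finer properties of $r$, but it is not needed for the existence and uniqueness claimed here.
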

\noindent The proof is classical and relies on the construction of a resolvent and  can be found for instance in \cite[Theorem 3.5, p. 44]{grip}.

The solution can be represented by the 
formula $z(t) = f(t) + \int_0^t r(t-a) f(a) da$ 
where $r$ is the resolvent associated with $\kernel$.
Then techniques based on the Laplace transform lead to 
a spectral decomposition of the resolvent giving 
sharp $L^p$ estimates of the solution in a direct way.
Our aim is to provide $L^\infty$-bounds on $z$ that do not rely on 
the knowledge of $r$.
Indeed, we expose here ideas that should be efficient at the discrete level
where the spectral decomposition does not provide uniform asymptotic characterization
with respect to $h$. 
Our approach relies on the concept of a comparison principle.
For the sake of clarity, we detail the basic ideas first in the continuous setting and then
transpose these to the discrete level.
For this purpose, we first study an initial layer corrector.

Some of the arguments presented in this section are formal. This is done for the sake of clarity and to expose the main ideas.
All the computations can be made rigorous using the concept of Lipschitz regularity along the characteristics of the transport
equation (see for instance the detailed computations of \cite[Lemma 3.1]{MiOel.1}). On the other hand,
since the discrete counterpart follows with full rigor, the presentation of the continuous setting was optimized 
in this sense.

\subsection{An initial layer}\label{sec:initiallayer}
We solve for the time being 
a given problem whose importance will be explained later on. 
We denote by $\tkernel$ the renormalized version of the convolution kernel above:
$\tkernel (a) := \kernel(a)/ \int_\rr \kernel(\tia) d\tia$.
\begin{equation}\label{eq.w}
		 w(t)-\int_0^t w(t-a) \tkernel(a) da = \int_t^\infty \left(a-t \right) \tkernel(a) da, \;  t>0.
\end{equation}
This problem can also be  reformulated as 
\begin{equation}\label{eq.L.conserved}
	\left\{
	\begin{aligned}
		& \int_\rr (w(t)-w(t-a))\kernel(a) da = 0, & t\geq 0,\\
		& w(t) = -t, & t< 0.\\
	\end{aligned}
	\right.
\end{equation}
Setting the elongation variable as $u(a,t) := w(t)-w(t-a)$, one can 
also reformulate the problem as a non-local PDE on this new variable 
\cite{MiOel.1}: find $u \in L^\infty((0,T)\times\rr,(1+a)^{-1})$ solving 

\begin{equation}\label{eq.elongation.u}
	\left\{
\begin{aligned}
	& \dt u + \da u = - \int_\rr \da \tkernel(\tia) u(\tia,t)d \tia, & a>0,\; t>0,\\
	& u(0,t)=0, & a=0,\; t>0,\\
	& u(a,0)= w(0^+) - w(-a)=:u_I(a), & a>0,\; t=0.
\end{aligned}
\right.
\end{equation}

Indeed, formally one has: $\dt u + \da u = \dot{w}$ by definition 
of $u$, then one writes:
\begin{align}
\ddt{} \int_\rr \tkernel(a) u(a,t) da &- \int_\rr \da \tkernel (a ) u(a,t) da = \left(\int_\rr \tkernel(a) da\right) \dot{w} = \dot{w}
\end{align}
but with this new definition \eqref{eq.L.conserved} becomes simply  
$\int_\rr \tkernel(a) u(a,t) da=0$ for a.e. $t>0$, this shows that 
$\dot{w} = - \int_\rr \da \tkernel(a) u(a,t) da$.  
Then using the first transport equation above shows how, starting from $w$ solving \eqref{eq.w},
one recovers $u$ solving \eqref{eq.elongation.u}.
Conversely, starting from $u$ solving \eqref{eq.elongation.u}, one recovers $w$ solving \eqref{eq.w} by setting 
$w(t) := w(0^+) + \int_0^t \dot{w}(s) ds$ with $\dot{w}(s) = - \int_\rr \da \tkernel(a) u(a,s) da$. 
Although these computations (and the rest of formal computations of this section) 
are formal, they can be made rigorous using the 
method of characteristics (see for instance the detailed computations of \cite[Lemma 3.1]{MiOel.1}).
A simple computation shows that $w(0^+)= \mu_1/\mu_0 = \mu_1$ where 
$\mu_k := \int_\rr a^k \kernel(a) da$.
Using \cite[Theorem 6.1]{MiOel.2}, there exists a unique solution 
$u \in L^\infty((0,T)\times\rr,(1+a)^{-1})$ of \eqref{eq.elongation.u}.

We are interested in a bound on $w$ so it seems natural to integrate 
\eqref{eq.elongation.u} in time. This leads to define 
$p(a,t) := \int_0^t u(a,s) ds$ which solves:
\begin{equation}\label{eq.p}
	\left\{
	\begin{aligned}
		& \dt p + \da p= - \int_\rr \da \tkernel(\tia) p(\tia,t)d \tia + u_I(a),& a>0,\; t>0,\\
		& p(0,t)=0, & a=0,\; t>0,\\
		& p(a,0)= 0, & a>0,\; t=0,
	\end{aligned}
	\right.
\end{equation}
we then make a change of variables to get rid of the $a$-dependent extra source term 
that appears on the right hand side. 
Setting $\tp(a,t) := p(a,t) - \int_0^a u_I(s)ds$, it solves:
 \begin{equation}\label{eq.tp}
 	\left\{
 	\begin{aligned}
 		& \dt \tp + \da \tp= - \int_\rr \da \tkernel(\tia) \tp(\tia,t)d \tia  - \int_\rr \da \tkernel  \int_0^a u_I(s) ds, & a>0,\; t>0\\
 		& \tp(0,t)=0 & a=0,\; t>0\\
 		& \tp(a,0)= - \int_0^a  u_I(s)ds & a>0,\; t=0
 	\end{aligned}
 	\right.
 \end{equation}
 Using then Proposition \ref{prop:decay_weighted}, the integration by parts formula is well defined and we have:
 $$
 \int_\rr \da \tkernel \int_0^a u_I(s)ds da = - \int_\rr \tkernel(a) u_I(a) da = 0.
 $$ 
 so that \eqref{eq.tp} reduces simply to: 
$$
\left\{
\begin{aligned}
	& \dt \tp + \da \tp= - \int_\rr \da \tkernel(\tia) \tp(\tia,t)d \tia ,& a>0,\; t>0\\
	& \tp(0,t)=0 & a=0,\; t>0\\
	& \tp(a,0)= - \int_0^a  u_I(s)ds & a>0,\; t=0
\end{aligned}
\right.
$$

\begin{theorem}\label{thm.w}
	Assume that $\da \kernel (a) \leq 0$ and that there exists a constant $\zeta > 0$ s.t. $- \da \kernel / \kernel < \zeta$, then 
	$w \in L^\infty (\rr)$ and $w \geq 0$ for a.e. $t\geq 0$.
\end{theorem}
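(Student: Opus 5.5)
The plan is to obtain the two properties of Theorem \ref{thm.w} by two separate arguments. Nonnegativity will come from the Volterra form \eqref{eq.w}. The $L^\infty$ bound will come from a monotone weighted $L^1$ quantity for the reduced transport problem satisfied by $\tp$.

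\emph{Nonnegativity.} In \eqref{eq.w} the kernel $\tkernel$ is nonnegative. The source $g(t):=\int_t^\infty (a-t)\tkernel(a)\,da$ is nonnegative, bounded by $\mu_1$, and integrable, since $\int_0^\infty g=\mu_2/2<\infty$ by Hypothesis \ref{hypo.data}. On any finite interval $(0,T)$ the solution is therefore given by the Neumann series
\[
w=\sum_{k\ge 0}\tkernel^{*k}*g .
\]
This series converges in $L^\infty(0,T)$ because $\|\tkernel^{*k}\|_{L^1(0,T)}\le (\int_0^T\tkernel)^k$ and $(\tkernel^{*k})(t)$ vanishes factorially for fixed $T$ (standard Volterra estimate). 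Every term is nonnegative, so $w\ge0$ a.e. This step uses no spectral information.

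\emph{Representation of $w$ through $\tp$.} Integrating $\dot w=-\int_\rr\da\tkernel\,u\,da$ in time and using $p(a,t)=\int_0^tu(a,s)\,ds$ gives
\[
w(t)=\mu_1-\int_\rr\da\tkernel(a)\,p(a,t)\,da .
\]
Since $p=\tp+\int_0^a u_I$, and $\int_\rr\da\tkernel\int_0^au_I\,da=-\int_\rr\tkernel u_I=0$ by Proposition \ref{prop:decay_weighted}, this becomes
\[
w(t)=\mu_1+K(t),\qquad K(t):=-\int_\rr\da\tkernel(a)\,\tp(a,t)\,da .
\]
We have $|K(t)|\le\int_\rr|\da\tkernel|\,|\tp|\,da$. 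Using $-\da\tkernel\le\zeta\tkernel$, it follows that $|K(t)|\le\zeta\int_\rr\tkernel\,|\tp(a,t)|\,da$. It therefore suffices to bound $E(t):=\int_\rr\tkernel(a)|\tp(a,t)|\,da$ uniformly in time.

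\emph{Key estimate: $E$ is nonincreasing.} The function $\tp$ solves $\dt\tp+\da\tp=K(t)$ with $\tp(0,t)=0$. Multiply by $\tkernel(a)\,\mathrm{sign}(\tp)$ and integrate in $a$. The boundary term at $a=0$ vanishes because $\tp(0,t)=0$, and the term at infinity vanishes by the decay of $\tkernel$. This gives
\[
\ddt{}E(t)=\int_\rr\da\tkernel\,|\tp|\,da+K(t)\int_\rr\tkernel\,\mathrm{sign}(\tp)\,da\le\int_\rr\da\tkernel\,|\tp|\,da+|K(t)|\le 0 .
\]
The last inequality uses $\da\tkernel\le0$ and $|K(t)|\le-\int_\rr\da\tkernel|\tp|\,da$. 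Hence $E(t)\le E(0)$. Since $u_I(a)=\mu_1-a$, we have $\tp(a,0)=a^2/2-\mu_1 a$, so
\[
E(0)\le\mu_2/2+\mu_1^2<\infty .
\]
Consequently $0\le w(t)\le\mu_1+\zeta(\mu_2/2+\mu_1^2)$ for a.e. $t\ge0$.

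\emph{Main obstacle.} I expect the difficulty to lie in justifying the weighted $L^1$ computation rigorously rather than in the algebra. The points to handle are the differentiation of $|\tp|$, which is only Lipschitz along characteristics; the vanishing of the boundary terms at $a=\infty$ when $\tp$ grows quadratically in $a$; and the integration by parts involving $u_I$. I would first carry out the computation along characteristics, $\tp(a,t)=\tp(a-t,0)+\int_{t-a}^tK$ for $a>t$ (and the analogous formula for $a<t$), in the spirit of \cite[Lemma 3.1]{MiOel.1}. The integrability at infinity would be controlled by $\kernel\in L^1(\rr,(1+a)^2)$ together with Proposition \ref{prop:decay_weighted}, applied to $\tkernel$ and $|\da\tkernel|$ against weights of order $(1+a)^2$.
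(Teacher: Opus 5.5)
Your proposal is correct and follows essentially the same route as the paper. The $L^\infty$ bound comes from the weighted-$L^1$ contraction $\frac{d}{dt}\int_{\mathbb{R}_+}\varrho\,|\tilde p|\,da\le 0$ (the idea of \cite[Lemma 5.1]{MiOel.2}) combined with $-\partial_a\varrho\le\zeta\varrho$, and nonnegativity comes from positivity of the resolvent in the representation formula. The differences are minor: you derive the exact identity $w=\mu_1+K(t)$ where the paper splits $p$ with the triangle inequality, and your Neumann series gives the resolvent's positivity directly for a nonnegative bounded kernel, instead of citing \cite[Proposition 9.8.1]{grip} under the condition $\int_0^t\varrho<1$.
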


\begin{proof}[Proof of Theorem \ref{thm.w}]
 	We apply the key idea of \cite[Lemma 5.1]{MiOel.2} and write:
 	\begin{align}
 	\ddt{} \int_\rr \tkernel (a) |\tp(a,t)| da &- \int_\rr \da \tkernel (a) |\tp(a,t) | da \leq - \int_\rr \da \tkernel (a) |\tp(a,t) | da,
 	\end{align}
 	providing that 
	\begin{align}
	\int_\rr \tkernel(a) | \tp(a,t) | da &\leq \int_\rr \tkernel(a) | \tp(a,0) | da 
	= \int_\rr \tkernel(a) \left| \int_0^a u_I\right| da \lesssim \| \tkernel (1+a)^2\|_{1}.
	\end{align}
 	Then writing that 
 	$$
 	\begin{aligned}
 		w(t) & = w(0) + \int_0^t \dot{w}(s)\,ds
		= w(0) - \int_0^t \int_\rr \da \tkernel(a) u(a,s)\, da\, ds  \\
		& = w(0) -  \int_\rr \da \tkernel(a) p(a,t)\, da,
 	\end{aligned}
 	$$
 	this provides that:
 	\begin{align}
 			\left|w(t)\right| &\leq \frac{\mu_1}{\mu_0} + \int_\rr \frac{-\da \tkernel}{\tkernel} \tkernel(a) | p(a,t) | da\leq \frac{\mu_1}{\mu_0} + \zeta \int_\rr   \tkernel(a) | p(a,t) | da \\
 			&\leq C + \zeta \int_\rr | \tp(a,t) | \tkernel(a) da +  \zeta \int_\rr \left| \int_0^a u_I(s)ds \right| \tkernel(a) da \\
			&\leq C \left(1 + \| \tkernel \|_{L^1(\rr,(1+a)^2)}\right).
 	\end{align}
The initial layer corrector $w$ is thus bounded in $L^\infty(\rr)$. Moreover it is non-negative since the source term in \eqref{eq.w} is non-negative and the kernel is also non-negative.
Indeed, the resolvent associated with \eqref{eq.w} is positive: 
as we assumed that $0\leq -\da \kernel(a) / \kernel(a) \leq \zeta$, 
 	which gives immediately that 
 	$$
 	\int_0^t \tkernel (a) da = 1 - \int_t^\infty \tkernel(a) da \leq 1 - \frac{\kernel(0)\exp(-\zeta t)}{\zeta \mu_0} < 1.
 	$$
 	We are in the position to  apply \cite[Proposition 9.8.1]{grip}, which ensures the positivity of the resolvent. Next, one has the 
	representation formula for the solution $w$ of \eqref{eq.w}: $w(t)= \int_t^\infty(a-t)\tkernel(a) da + \int_0^t r(t-a) \int_a^\infty (s-a)\tkernel(s) ds da\geq 0$.
	\end{proof}
 
 \subsection{A new comparison principle}\label{sec:comparison} 

\begin{theorem}\label{thm.comparison}
	Assume that $z \in L^1_\loc(\rr)$ solves:
	\begin{equation}\label{eq.z.comparison}
		\left\{\begin{aligned}
			&	 \int_\rr (z(t)-z(t- a)) \kernel(a) da = f(t), & t>0 \\
			&	z(t) = \zp(t), & t<0,
		\end{aligned}\right.
	\end{equation}
	under Assumptions \ref{hypo.data}. 
	then $z \in L^\infty (0,T)$ for any finite $T$.   
\end{theorem}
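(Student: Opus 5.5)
The plan is to build an explicit super-solution out of a linear-in-time leading order part plus the initial layer corrector $w$ of Theorem~\ref{thm.w}, and then to conclude with the comparison principle for Volterra equations with a positive resolvent, \cite[Proposition 9.8.1]{grip}.

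\textbf{Step 1: reduction to a zero past and a resolvent-type equation.} Write $z=z_1+z_2$, where $z_2$ absorbs the past data $\zp$. Rewriting \eqref{eq.z.comparison} with $\mu_0=1$ gives
\[
z(t)-\int_0^t z(t-a)\kernel(a)\,da = f(t)+\int_t^\infty \zp(t-a)\kernel(a)\,da=:g(t).
\]
Since $\zp\in\Lip(\RR^-)$, we have $|\zp(s)|\le C(1+|s|)$, and hence
\[
|g(t)|\le \|f\|_\infty + C\int_t^\infty (1+a)\kernel(a)\,da .
\]
This is bounded because $\kernel\in L^1(\rr,(1+a)^2)$. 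Under the hypotheses $\da\kernel\le0$ and $-\da\kernel/\kernel\le\zeta$, the proof of Theorem~\ref{thm.w} shows that the resolvent $r$ is non-negative. Consequently the solution map $g\mapsto z$ is order preserving: if $\bar z-\int_0^t\bar z(t-a)\kernel(a)\,da\ge |g|$, then $|z|\le \bar z$. It therefore suffices to construct such a $\bar z$ that is bounded on $(0,T)$.

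\textbf{Step 2: the ansatz.} The constant function is useless as a super-solution, because applying the Volterra operator to $1$ gives $\int_t^\infty\kernel$, which decays. Instead take the leading order part $\bar z_0(t)=M(1+t)$. A direct computation gives
\[
\bar z_0(t)-\int_0^t \bar z_0(t-a)\kernel(a)\,da
= M\Big(\int_0^t a\kernel(a)\,da + (1+t)\int_t^\infty\kernel(a)\,da\Big).
\]
For $t\ge t_0$ this is at least $M\int_0^{t_0}a\kernel>0$, so it dominates $\|g\|_\infty$ once $M$ is large. Near $t=0$, however, it degenerates. This is the tail error produced by truncating the past of the leading order part, and it is exactly what the corrector $w$ of \eqref{eq.w} is designed to repair.

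Using \eqref{eq.L.conserved}, the function $\bar z_0 + M w$ is the solution of the problem with past data matching the linear extension. Concretely, the combination $t+w(t)$ satisfies
\[
(t+w)-\int_0^t (t-a+w(t-a))\kernel(a)\,da = t\int_t^\infty\kernel + \int_0^t a\kernel + \int_t^\infty (a-t)\kernel = \mu_1 = \int_0^\infty a\kernel(a)\,da .
\]
This is a positive constant, so the layer is exactly compensated. Set
\[
\bar z(t):=\frac{\|g\|_\infty}{\mu_1}\,\big(t+w(t)\big).
\]
Then $\bar z$ is a super-solution with right-hand side $\|g\|_\infty\ge|g|$, and comparison yields
\[
|z(t)|\le \frac{\|g\|_\infty}{\mu_1}\,\big(T+\|w\|_\infty\big)\quad\text{on }(0,T).
\]
The bound $\|w\|_\infty<\infty$ together with $w\ge0$ is supplied by Theorem~\ref{thm.w}.

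\textbf{Main obstacle.} The delicate point is the rigorous justification of the comparison step. It requires that the resolvent be non-negative and that the super-solution identity hold in the $L^1_\loc$ sense. Both rest on the formal transport computations behind Theorem~\ref{thm.w}, which must be made rigorous along characteristics as in \cite[Lemma 3.1]{MiOel.1}. If the sign of the resolvent proves problematic, the fallback is to compare $\bar z\pm z$ directly: their difference solves the homogeneous-with-nonnegative-source problem, and its positivity follows from the same $L^1$-contraction argument of \cite[Lemma 5.1]{MiOel.2} applied to the elongation variable.
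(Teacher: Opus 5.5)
Your proposal is correct and follows essentially the paper's route. You build a linear-in-time leading part completed by $\frac{\|\cdot\|_\infty}{\mu_1}w$, where the source $\int_t^\infty (a-t)\kernel(a)\,da$ of \eqref{eq.w} exactly cancels the negative tail of the linear part, and you conclude with the positive-resolvent comparison of \cite{grip}. Your identity $(t+w)-\int_0^t\bigl(t-a+w(t-a)\bigr)\kernel(a)\,da=\mu_1$ is a tidier form of the paper's inequality chain, which carries an extra constant $C$ and the refinement $\|f\|_{L^\infty(s,T)}$.

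One heuristic remark is inaccurate, though it plays no role in the proof. The ansatz $M(1+t)$ does not degenerate near $t=0$: its Volterra source is at least $M\inf_{t\ge0}\bigl(\int_t^\infty\kernel(a)\,da+\int_0^t a\kernel(a)\,da\bigr)>0$. So what the corrector $w$ buys is the sharp slope $1/\mu_1$, not boundedness itself.
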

 
 \begin{proof}[Proof of Theorem \ref{thm.comparison} (stated in Section \ref{sec:assumptions} as Theorem \ref{thm.main.comparison})]
 	Our aim is now to construct a super-solution of our problem, for this purpose we start as in the proof of \cite[Theorem 1.1]{MiOel.1} and we define:
 	\begin{equation}\label{eq.def.U}
 	U_f(t) :=  C  + \frac{\mu_0}{\mu_1} \begin{cases}
 		\int_0^t \nrm{f}{L^\infty(s,T)} ds & t > 0 \\
 		t \nrm{f}{L^\infty(0,T)}& \text{otherwise}
 	\end{cases}
 	\end{equation}
 	and we complete this profile by adding an initial layer, 
	defining $S_f(t) := U(t) +\frac{ \|f\|_\infty }{\mu_1} w(t)$, where we denote 
	$\|f\|_\infty := \|f\|_{L^\infty(0,T)}$. 
	We compute 
 	\begin{align}
 		&S_f(t) - \int_0^t S_f(t-a) \tkernel(a)\, da \notag\\
 		&= \int_0^\infty \left(U_f(t) - U_f(t-a)\right) \tkernel (a)\, da
		+ \int_t^\infty U_f(t-a) \tkernel(a)\, da \notag\\
		&\quad +\frac{ \|f\|_\infty  }{\mu_1}  \int_t^\infty (a-t) \tkernel(a)\, da,
 	\end{align}
 	by the same arguments as in the proof of \cite[Theorem 1.1]{MiOel.1}, one writes:
	$$
	\begin{aligned}
	& \left(\int_0^t + \int_t^\infty \right)\left( U(t)-U(t-a)\right) \tkernel(a)\,da \\
	& \geq
	\frac{\mu_0}{\mu_1} \biggl\{\int_0^t \|f\|_{L^\infty(t,T)} a \tkernel(a)\, da \\
	& \quad + \int_t^\infty \bigl(t \|f\|_{L^\infty(t,T)}  + (a-t) \|f\|_{L^\infty(0,T)}\bigr) \tkernel(a)\, da \biggr\}
	\geq |f(t)|.
	\end{aligned}
	$$
	it remains to take care of the non-positive tail $\int_t^\infty U_f(t-a) \tkernel(a) da$ added in order to obtain the latter inequality.	
	one concludes that
 	\begin{align}
 	&S_f (t) - \int_0^t S_f(t-a) \tkernel(a)\, da \notag\\
	&\geq | f(t) | + \int_t^\infty \left(C+ \frac{\nrm{f}{L^\infty(0,T)} }{\mu_1} (t-a) \right)\tkernel(a)\, da \notag\\
	&\quad +  \frac{ \|f\|_\infty  }{\mu_1}  \int_t^\infty (a-t) \tkernel(a)\, da  \geq | f(t) |
 	\end{align}
 	the major improvement is here the compensation that our initial layer corrector $w$ provides: the linear part of $U$'s tail is negative and exactly 
 	matching the positive contribution from $w$. 
	As in the proof of Theorem \ref{thm.w},  the resolvent being positive one can apply \cite[Lemma 9.8.2 p. 257]{grip} in order to conclude. 
	Namely 
	$$
 	\begin{aligned}
 	|z(t)| - \int_0^t & |z(t-a)| \tkernel(a) da \\
	&  \leq \frac{|f(t)| }{\mu_0}
	+ \int_t^\infty \left|  \zp(t-a) \right|\tkernel(a)\, da =: \tilde{f} \\
	& \leq S_{\tilde f} (t)- \int_0^t S_{\tilde f}(t-a) \tkernel(a)\, da
 	\end{aligned}
 	$$
 	which implies that $|z(t)| \leq S_{\tilde{f}}(t)$, as soon as $|z(0)| \leq S(0)$, the latter condition is fulfilled by tuning the value of $C$ in the definition of $U$ in \eqref{eq.def.U}.
 \end{proof}
 
 \begin{remark}
 	The condition $-\da \kernel (a) / \kernel(a) \leq \zeta$  allows to consider polynomial decrease of the kernel at infinity. The method of recovering the tails introduced in \cite{MiOel.1}
 	in Lemma 2.4 fails for the latter kernels. If one considers only $U$ as a super-solution, then applying the integral operator implies:
 	$$
 	\begin{aligned}
 		U(t) - &\int_0^t U(t-a) \tkernel(a) da \geq |f(t)| + C \int_t^\infty \tkernel(a) da + \frac{\|f\|_\infty}{\mu_1} \int_t^\infty (t-a) \tkernel(a) da \\
 		& =  |f(t)| + \int_t^\infty \tkernel(a) da \left(C + \frac{\|f\|_\infty}{\mu_1} \frac{\int_t^\infty (t-a) \tkernel(a) da}{\int_t^\infty  \tkernel(a) da} \right)  \\
 			& = :
 			|f(t)| + \int_t^\infty \tkernel(a) da \left(C - \frac{\|f\|_\infty}{\mu_1} A [\tkernel](t) \right) 
 	\end{aligned}
 	$$

	Here one could ask that $A[\tkernel](t)$ is uniformly bounded in time, as in  \cite[Lemma 2.4]{MiOel.1}. However, there are simple counterexamples for this, such as $\kernel(a) = (1+a)^{-4}$ for which $A[\rho]$ is not uniformly bounded:
 	$$
 	A[\kernel](t) = \frac{\int_0^\infty a \kernel(a+t)da }{\int_0^\infty \kernel(a+t)da} \sim \frac{t}{2}.
 	$$
 	
	The motivation of our approach is that there is no need anymore 
to compute the bound on this latter quantity since the negative term of the right hand side in the comparison principle is compensated by the contribution of the initial layer corrector.

 	Another argument should be used in the case of a compactly supported kernel. Indeed in this case there exists a time $t_0$ great enough s.t.
 	$\int_0^{t} \tkernel(a) da = 1$, which prevents from using \cite[Lemma 9.8.2]{grip}. 
 \end{remark}

 \section{Numerical counterpart}\label{sec:discrete}
 In order to apply the previous ideas to the discrete setting, we first need to
 introduce a discrete framework that approximates the solutions of \eqref{eq.z}.
 We use piecewise constant approximations. Indeed, for any locally integrable function $g$ defined on $\rr$, we set:
 $ g^n := \frac{1}{h} \int_{t_n}^{t_{n+1}} g(t) dt$,  $\forall n \in \N$.
 Using the rectangular rule, the discrete counterpart of \eqref{eq.z} reads: find $(Z^n)_{n\in\{0, \ldots, N\}}$ solving 
 \begin{equation}\label{eq.disc.z}
 	\left\{\begin{aligned}
 		&	Z^n - h \sum_{j\geq 0} Z^{n-j} \tr_j = f^n , & n \in \{0, \ldots, N\} \\
 		&	Z^n = Z_p^n , & n<0
 	\end{aligned}\right.
 \end{equation}
 where the discrete kernel $(\tr_j)_{j\in\N}$ is defined as in Section \ref{subsec:discrete} below.		
The sequence $(\tr_j)_{j\in\N}$ discretizes the kernel $\tkernel$:
 $
 R_j := \frac{1}{h} \int_{jh}^{(j+1)h} \kernel(a) da 
 $.

\begin{lemma}
	Under the assumptions of Theorem \ref{thm.comparison}, the discrete kernel $(\tr_j)_{j\in\N}$ is a non-increasing sequence satisfying
	$ - \frac{\tr_j - \tr_{j-1}}{h \tr_j} \leq \zeta'$ for all $j \in \N^*$ and all $h \leq h_0$,
	where $\zeta' := 2\zeta$ and $h_0 := 1/(2\zeta)$.
\end{lemma}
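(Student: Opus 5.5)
The plan is to reduce both claims to pointwise properties of $\kernel$, using the representation
$$
\tr_j = \frac1h\int_0^h \kernel(jh+s)\,ds ,
$$
which lets us compare $\tr_j$ and $\tr_{j-1}$ by comparing $\kernel(jh+s)$ with $\kernel((j-1)h+s)$ for each fixed $s\in(0,h)$.

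\emph{Monotonicity.} Since $\kernel\in\Lip(\rr)$, it is absolutely continuous. Hypothesis~\ref{hypo.data}~iii) gives $\da\kernel\le 0$, so $\kernel$ is non-increasing. Hence $\kernel(jh+s)\le\kernel((j-1)h+s)$ for every $s\in(0,h)$. Integrating in $s$ gives $\tr_j\le\tr_{j-1}$ for all $j\ge1$.

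\emph{Multiplicative lower bound on $\kernel$.} Hypothesis~\ref{hypo.data}~iv) implicitly requires $\kernel>0$, and we check this explicitly. Because $\kernel$ has unit mass and is non-increasing, $\kernel(0)>0$. On any interval where $\kernel>0$, the function $\log\kernel$ is absolutely continuous with $(\log\kernel)'=\da\kernel/\kernel\ge-\zeta$. Integrating gives
$$
\kernel(b)\ge \kernel(a)\,e^{-\zeta(b-a)}, \qquad b\ge a .
$$
A continuation argument (Grönwall applied to $\da\kernel\ge-\zeta\kernel$) then shows $\kernel$ never vanishes, so the inequality holds on all of $\rr$. Applying it with $a=(j-1)h+s$ and $b=jh+s$, and integrating over $s\in(0,h)$, yields
$$
\tr_j\ge e^{-\zeta h}\,\tr_{j-1}, \qquad\text{hence}\qquad 0\le \tr_{j-1}-\tr_j\le (e^{\zeta h}-1)\,\tr_j .
$$

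\emph{Elementary bound.} Dividing by $h\tr_j>0$, and using the mean value theorem in the form $e^{x}-1\le x e^{x}$ for $x\ge0$, we get
$$
-\frac{\tr_j-\tr_{j-1}}{h\tr_j}\le \frac{e^{\zeta h}-1}{h}\le \zeta e^{\zeta h}.
$$
For $h\le h_0=1/(2\zeta)$ this is at most $\zeta e^{1/2}<2\zeta=\zeta'$, which is the claimed bound.

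The only delicate point is the strict positivity of $\kernel$, needed to divide by $\kernel$ and $\tr_j$. Everything else is monotonicity plus a one-line exponential estimate.
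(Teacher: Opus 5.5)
Your proof is correct, but it reaches the ratio bound by a different mechanism than the paper.

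\textbf{The paper's route.} It stays linear. It writes $R_{j+1}-R_j=\frac1h\int_{jh}^{(j+1)h}\int_a^{a+h}\varrho'(s)\,ds\,da$, uses $\varrho'\ge-\zeta\varrho$, and then uses monotonicity a second time ($\int_a^{a+h}\varrho\le h\varrho(a)$). This gives the additive bound $R_j-R_{j+1}\le\zeta hR_j$. Since the earlier index appears on the right-hand side, it must then trade $R_j$ for $R_{j+1}$ via $R_j\le R_{j+1}/(1-\zeta h)$. The factor $1/(1-\zeta h)\le 2$ is exactly what forces $h\le 1/(2\zeta)$.

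\textbf{Your route.} You integrate the differential inequality exactly (Gr\"onwall), obtaining the multiplicative comparison $\varrho(b)\ge e^{-\zeta(b-a)}\varrho(a)$ and hence $R_{j-1}\le e^{\zeta h}R_j$. This has several advantages:
\begin{itemize}
\item it already has the later index in the denominator;
\item it does not use monotonicity in this step;
\item it holds for every $h>0$.
\end{itemize}
The resulting bound $(e^{\zeta h}-1)/h\le\zeta e^{\zeta h}$ is sharper than the paper's $\zeta/(1-\zeta h)$, and the restriction $h\le h_0$ only fixes the constant $e^{1/2}<2$.

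\textbf{The cost, and a shortcut.} The price is the positivity discussion for $\varrho$, which the paper leaves implicit in hypothesis iv). You can shorten your continuation argument: $a\mapsto e^{\zeta a}\varrho(a)$ is absolutely continuous with a.e.\ non-negative derivative, hence non-decreasing. That gives both the Gr\"onwall bound and $\varrho>0$ at once, starting from $\varrho(0)>0$.
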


\begin{proof}
	Writing the difference: $R_{j+1}-R_j = \frac{1}{h} \int_{jh}^{(j+1)h} (\kernel(a+h) - \kernel(a)) da$,
	then using the monotonicity of $\kernel$ provides that $R_{j+1} \leq R_j$ for all $j\in\N$.
	Next, using the hypothesis $\da \kernel(s) \geq -\zeta \kernel(s)$ and
	the monotonicity of $\kernel$ (which gives $\kernel(s) \leq \kernel(a)$ for $s\geq a$):
	\begin{align}
				R_{j+1}- R_{j} &= \frac{1}{h} \int_{jh}^{(j+1)h} (\kernel(a+h) - \kernel(a)) da
				= \frac{1}{h} \int_{jh}^{(j+1)h} \int_a^{a+h} \da \kernel(s)\, ds\, da \\
				&\geq - \zeta \frac{1}{h} \int_{jh}^{(j+1)h} \int_a^{a+h} \kernel(s)\, ds\, da
				\geq - \zeta \frac{1}{h} \int_{jh}^{(j+1)h} h\, \kernel(a)\, da
				= -\zeta h R_{j}
	\end{align}
	Rearranging: $(R_j - R_{j+1})/(h R_{j+1}) \leq \zeta/(1-\zeta h) \leq 2\zeta$ for $\zeta h \leq 1/2$.
	Setting $j \to j-1$ provides the desired result (with $\zeta$ replaced by $\zeta' := 2\zeta$).			
\end{proof}

 \subsection{The initial layer}\label{subsec:discrete}
We solve:
 \begin{equation}\label{eq.disc.w}
 	\left\{\begin{aligned}
 		&	 W^n - h\sum_{j\geq 0} W^{n-j} \tr_j =0 & n \geq 0 \\
 		& W^n = - nh & n< 0.
 	\end{aligned}
 	\right.
 \end{equation} 
 The discrete elongation variable can be defined as in the continuous setting: $U^n_j := W^n - W^{n-j}$, and the analog of \eqref{eq.elongation.u} becomes
 \begin{equation}\label{eq.disc.elongation}
 	U^{n+1}_{j+1} = U^n_j + \delta W^{n+\nud} ,\quad \delta W^{n+\nud} := W^{n+1}-W^n, \quad j\geq0,\quad n\geq 0.
 \end{equation}
Expressing \eqref{eq.disc.w} with the elongation variable leads to the conservation principle $h \sum_{j\ge0} U^n_j \tr_j = 0$ for all $n\ge0$. 
		
\begin{lemma}
	The discrete finite differences satisfy:
	$$
	\delta W^{n+\nud} = - h \sum_{j\geq 1} {(\tr_j - \tr_{j-1})} U^{n+1}_j,\quad n\geq 0
	$$
\end{lemma}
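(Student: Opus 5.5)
The plan is to derive the identity purely algebraically from two ingredients: the discrete conservation principle $h\sum_{j\ge0}U^n_j\tr_j=0$ for $n\ge0$, and the shift relation \eqref{eq.disc.elongation}. Both are used at two consecutive time levels $n$ and $n+1$.

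\emph{Step 1: the conservation principle.} Since $\kernel$ has unit mass, $h\sum_{j\ge0}\tr_j=\int_\rr\kernel=1$. Hence $W^n = h\sum_{j\ge0}W^n\tr_j$, and \eqref{eq.disc.w} becomes $h\sum_{j\ge0}(W^n-W^{n-j})\tr_j=0$, i.e.\ $h\sum_{j\ge0}U^n_j\tr_j=0$ for every $n\ge0$. This splitting of sums must be justified. For $j>n$ we have $W^{n-j}=-(n-j)h$, so $|U^n_j|\lesssim 1+jh$. The sums therefore converge absolutely, because $\kernel\in L^1(\rr,(1+a)^2)$ gives $h\sum_j (1+jh)\tr_j<\infty$. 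Note also that $U^n_0=0$.

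\emph{Step 2: combining the two time levels.} At level $n+1$, since $U^{n+1}_0=0$, conservation reads $h\sum_{j\ge1}U^{n+1}_j\tr_j=0$. At level $n$, shift the index $j\mapsto j-1$ to get $0=h\sum_{j\ge1}U^n_{j-1}\tr_{j-1}$. Then substitute $U^n_{j-1}=U^{n+1}_j-\delta W^{n+\nud}$ from \eqref{eq.disc.elongation}, which is valid for $j\ge1$ and $n\ge0$. This gives
\[
0=h\sum_{j\ge1}U^{n+1}_j\tr_{j-1}-\delta W^{n+\nud}\,h\sum_{j\ge0}\tr_j
= h\sum_{j\ge1}U^{n+1}_j\tr_{j-1}-\delta W^{n+\nud}.
\]

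\emph{Step 3: conclusion.} Subtract the level-$(n+1)$ identity $h\sum_{j\ge1}U^{n+1}_j\tr_j=0$ from the last expression. This yields $\delta W^{n+\nud}=h\sum_{j\ge1}(\tr_{j-1}-\tr_j)U^{n+1}_j$, which is the claimed formula.

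The only delicate point is the absolute convergence needed to split and recombine the infinite sums. This follows from the linear growth of $U^n_j$ in $j$ together with the first-moment bound on $(\tr_j)$.
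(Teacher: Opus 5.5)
Your proof is correct and follows essentially the same route as the paper: sum the shift relation \eqref{eq.disc.elongation} against $hR_j$, use the conservation principle at levels $n$ and $n+1$ (with $U^{n+1}_0=0$), and perform a discrete summation by parts, which you carry out as an index shift followed by subtraction. Your absolute-convergence remark, based on $|U^n_j|\lesssim 1+jh$ and a finite first moment, supplies a justification that the paper leaves implicit.
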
 
 
\begin{proof}
	Summing \eqref{eq.disc.elongation} against $h \tr_j$ for $j\geq 0$  
	provides: 
	$h \sum_{j\geq 0} \tr_j U^{n+1}_{j+1} = h \sum_{j\geq 0} \tr_j U^n_j + \delta W^{n+\nud}$, 
	then adding and subtracting $h \sum_{j\geq 0} \tr_j U^{n+1}_j$, one obtains:
	\begin{equation}\label{eq.elo.dw}
		h \sum_{j\geq 0} \tr_j U^{n+1}_j + h \sum_{j\geq 0} \tr_j (U^{n+1}_{j+1}- U^{n+1}_j) = h \sum_{j\geq 0} \tr_j U^n_j + \delta W^{n+\nud}.
	\end{equation}
	Performing an integration by parts transforms the second term on the left hand side into:
	$$
	h \sum_{j\geq 0} \tr_j (U^{n+1}_{j+1}- U^{n+1}_j)  = - h \sum_{j\geq 1} {(\tr_j - \tr_{j-1})} U^{n+1}_j
	$$
	and the conservation principle cancels the first terms on both sides of \eqref{eq.elo.dw}.
\end{proof} 
 
\begin{corollary}
	The elongation variable satisfies a closed equation:
	\begin{equation}\label{eq.closed.disc.elongation}
		U^{n+1}_{k+1} = U^n_k - h \sum_{j\geq 1} {(\tr_j - \tr_{j-1})} U^{n+1}_j, \quad k\geq 0 ,\quad n\geq 0
	\end{equation}
	which is the discrete version of \eqref{eq.elongation.u}.
\end{corollary}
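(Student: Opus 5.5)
The plan is to insert the expression for $\delta W^{n+\nud}$ given by the preceding Lemma into the transport relation \eqref{eq.disc.elongation}. No further estimates are needed.

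First, I would check \eqref{eq.disc.elongation} directly from the definition $U^n_j := W^n - W^{n-j}$. For any $k\geq 0$ and $n\geq 0$,
\[
U^{n+1}_{k+1} - U^n_k = \bigl(W^{n+1} - W^{n-k}\bigr) - \bigl(W^n - W^{n-k}\bigr) = W^{n+1} - W^n = \delta W^{n+\nud}.
\]
This identity is purely algebraic. It holds for every index, including those for which $n-k<0$ falls in the past region where $W^{n-k} = -(n-k)h$.

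Second, I would invoke the previous Lemma, which gives $\delta W^{n+\nud} = - h \sum_{j\geq 1} (\tr_j - \tr_{j-1}) U^{n+1}_j$ for $n\geq 0$. Substituting this into the identity above yields \eqref{eq.closed.disc.elongation} for all $k\geq 0$ and $n\geq 0$. The right-hand side now involves only $U^n$ and $U^{n+1}$, so the system no longer refers to $W$ and is closed. The term $-h\sum_{j\ge1}(\tr_j-\tr_{j-1})U^{n+1}_j$ is the exact discrete analogue of $-\int_\rr \da\tkernel\, u\, d\tia$ in \eqref{eq.elongation.u}.

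The only point needing care, and thus the main (minor) obstacle, is that the series makes sense; this is also what justifies the summation by parts in the Lemma. For $j>n+1$ one has $W^{n+1-j} = (j-n-1)h$, so $U^{n+1}_j$ grows at most linearly in $j$. Since $\tr_j$ is non-increasing, $\tr_{j-1}-\tr_j\geq 0$. Moreover $h\sum_j j(\tr_{j-1}-\tr_j)$ is controlled by $h\sum_j \tr_j$ plus a vanishing boundary term $j\tr_j\to 0$, which is finite under the moment assumption $\kernel\in L^1(\rr,(1+a)^2)$ of Hypothesis \ref{hypo.data}. Hence the series converges absolutely and the substitution is legitimate.
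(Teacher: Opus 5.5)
Your proposal is correct and follows the paper's route. The paper gives no separate proof of this corollary: it simply substitutes the preceding Lemma's expression for $\delta W^{n+\nud}$ into the transport relation \eqref{eq.disc.elongation}, exactly as you do. Your extra check that $h\sum_{j\ge1}(\tr_{j-1}-\tr_j)|U^{n+1}_j|$ converges, using the linear growth of $U^{n+1}_j$ for $j>n+1$ and the monotonicity and summability of $(\tr_j)$, is a valid supplement that the paper leaves implicit.
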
 
 
\begin{proposition}\label{prop.W.bounded}
	Assuming that there exists $\zeta'>0$ s.t. for all $j \in \N^*$,  
	$$
	- \frac{R_j-R_{j-1}}{h R_{j}}  \leq \zeta' 
	$$
	The discrete initial layer $(W^n)_{n\in\N}$ solving \eqref{eq.disc.w} is a bounded non-negative sequence (and the bound is uniform with respect to $h$).
\end{proposition}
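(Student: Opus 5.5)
We mimic the proof of Theorem \ref{thm.w} at the discrete level, working with the elongation variable $U^n_j$ and its time primitive. Since $W^n=-nh$ for $n<0$, the initial elongation is
\[
U^0_j = W^0 - W^{-j} = W^0 - jh .
\]
The conservation principle $h\sum_{j\ge0}\tr_j U^0_j=0$ then gives $W^0 = h\sum_j jh\,\tr_j/(h\sum_j \tr_j)$, a discrete first moment, which is bounded uniformly in $h$ thanks to $\kernel\in L^1(\rr,(1+a)^2)$.

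Next define the discrete primitive $P^n_j := \sum_{m=0}^{n-1} U^m_j$ (more convenient: sum over the indices appearing on the right of \eqref{eq.closed.disc.elongation}). Summing the closed equation \eqref{eq.closed.disc.elongation} in $n$ gives a shifted transport equation for $P$ with source $U^0_k$. Then subtract the discrete antiderivative $Q_k := \sum_{i<k} U^0_i$ to obtain $\tP^n_k := P^n_k - Q_k$. The extra source becomes $-h\sum_{j\ge1}(\tr_j-\tr_{j-1})Q_j$, and by discrete summation by parts (Abel) together with the conservation $h\sum_j \tr_j U^0_j=0$ this vanishes, up to boundary terms that go to zero at infinity (the discrete analogue of Proposition \ref{prop:decay_weighted}, in Appendix \ref{sec:discrete_convolution}). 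Hence $\tP$ solves the homogeneous closed scheme with initial datum $-Q_k$ and boundary value $0$ at $k=0$.

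The core step is a discrete $\ell^1(\tr)$ contraction: show that $h\sum_k \tr_k|\tP^{n}_k|$ is non-increasing in $n$.

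1. Write the scheme as $\tP^{n+1}_{k+1} = \tP^n_k + \Lambda^{n+1}$, where $\Lambda^{n+1} := -h\sum_{j\ge1}(\tr_j-\tr_{j-1})\tP^{n+1}_j$.
2. Multiply by $\operatorname{sign}(\tP^{n+1}_{k+1})\,h\tr_{k+1}$ and sum. This gives
\[
h\sum_k \tr_{k+1}|\tP^{n+1}_{k+1}| \le h\sum_k \tr_{k+1}|\tP^n_k| + h\sum_k \tr_{k+1}|\Lambda^{n+1}| .
\]
3. Use $\tr_{k+1}\le\tr_k$, i.e.\ $h\sum_k\tr_{k+1}|\tP^n_k| = h\sum_k \tr_k|\tP^n_k| - h\sum_k(\tr_k-\tr_{k+1})|\tP^n_k|$, so the monotonicity of the kernel produces a dissipation term.
4. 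Bound $|\Lambda^{n+1}|$ by $h\sum_j(\tr_{j-1}-\tr_j)|\tP^{n+1}_j|$ and multiply it by the mass $h\sum\tr_{k+1}\le1$.

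This is the main obstacle. The dissipation from step 3 is evaluated at time $n$ with index shift $(k,k+1)$, while $\Lambda^{n+1}$ involves time $n+1$ and the pair $(j-1,j)$. The two do not match directly as in the continuous formal computation, where the terms cancel exactly. I would first try the implicit form: move $\Lambda^{n+1}$'s contribution to the left, absorb it using $h\sum_{k\ge1}\tr_k|\tP^{n+1}_k|$ together with $\tP^{n+1}_0=0$, and use that $h\sum\tr_k<1$ (positivity of the tail, guaranteed by the ratio condition with $\zeta'$). If an exact cancellation fails, a factor $(1+Ch)$ per step would still yield a Gronwall bound $e^{CT}$, uniform in $h$ though not in $T$. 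Either way we obtain
\[
h\sum_k\tr_k|\tP^n_k| \le h\sum_k \tr_k|Q_k| \lesssim \|\kernel(1+a)^2\|_1 .
\]

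Finally, summing the formula for $\delta W^{n+\nud}$ from the preceding Lemma gives
\[
W^n = W^0 - h\sum_{j\ge1}(\tr_j-\tr_{j-1})\,P^n_j .
\]
The hypothesis gives $(\tr_{j-1}-\tr_j)\le \zeta' h\tr_j$, so
\[
|W^n| \le W^0 + \zeta' h\sum_j \tr_j\bigl(|\tP^n_j|+|Q_j|\bigr),
\]
which is bounded uniformly in $h$. For non-negativity, note that \eqref{eq.disc.w} is equivalent to $(1-h\tr_0)W^n - h\sum_{j=1}^n\tr_jW^{n-j} = h\sum_{j>n}(j-n)h\,\tr_j\ge0$, with $1-h\tr_0>0$ for $h\le h_0$. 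Induction on $n$ gives $W^n\ge0$, the discrete analogue of resolvent positivity.
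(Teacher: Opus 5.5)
You have the paper's architecture: elongation variable, time primitive, subtraction of the antiderivative of $U^0$ (removed by Abel summation and conservation), a weighted $\ell^1$ estimate, recovery of $W^n$ via $R_{j-1}-R_j\le\zeta' hR_j$, and positivity by induction. \textbf{The gap is the step that carries the proof:} the exact contraction $E^{n+1}\le E^n$ for $E^n:=h\sum_{k\ge0}R_k|\tilde P^n_k|$ is never established. The time mismatch you flag is caused by your test weight $hR_{k+1}$. Test $\tilde P^{n+1}_{k+1}=\tilde P^n_k+\Lambda^{n+1}$ against $hR_k$ instead, i.e.\ the weight at the departure point of the characteristic. Since $h\sum_{k\ge0}R_k=1$ and $|\Lambda^{n+1}|\le h\sum_{j\ge1}(R_{j-1}-R_j)|\tilde P^{n+1}_j|$, you get
\[
h\sum_{k\ge0}R_k|\tilde P^{n+1}_{k+1}|\le E^n+h\sum_{j\ge1}(R_{j-1}-R_j)|\tilde P^{n+1}_j| .
\]
Re-indexing and using $\tilde P^{n+1}_0=0$ gives
\[
h\sum_{k\ge0}R_k|\tilde P^{n+1}_{k+1}|=h\sum_{k\ge1}R_{k-1}|\tilde P^{n+1}_k|=E^{n+1}+h\sum_{k\ge1}(R_{k-1}-R_k)|\tilde P^{n+1}_k| .
\]
Both dissipation terms now sit at time level $n+1$ with the same pairing, so they cancel exactly and $E^{n+1}\le E^n$. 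This is the discrete form of $\int\varrho\,\partial_a|\tilde p|\,da=-\int\partial_a\varrho\,|\tilde p|\,da$. It is exactly the paper's $I^n\le J^n$; note that the dissipation term in $J^n$ must be read at time $n+1$.

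Your fallback does not prove the statement. With the weight $hR_{k+1}$, the left-hand side is $E^{n+1}$ (using $\tilde P^{n+1}_0=0$). Absorbing $(1-hR_0)|\Lambda^{n+1}|\le\zeta' hE^{n+1}$ then only gives $(1-\zeta' h)E^{n+1}\le E^n$, i.e.\ $E^n\le(1-\zeta' h)^{-n}E^0$, a bound that grows exponentially in $nh$. This is uniform in $h$ for $nh\le T$. It does not show that the whole sequence $(W^n)_{n\in\mathbb N}$ is bounded, which is what the proposition asserts (the discrete counterpart of $w\in L^\infty(\mathbb R_+)$). So your sentence ``either way we obtain $h\sum_kR_k|\tilde P^n_k|\le h\sum_kR_k|Q_k|$'' does not follow from the fallback.

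Some bookkeeping slips, none fatal:
\begin{itemize}
\item $U^0_0=W^0-W^0=0$, so conservation gives $W^0=\mu_{1,h}/(1-hR_0)$ rather than $\mu_{1,h}/\mu_0$; this is still uniformly bounded.
\item Your exclusive antiderivative $Q_k=\sum_{i<k}U^0_i$ makes the shifted equation homogeneous, and $W^n=W^0-h\sum_{j\ge1}(R_j-R_{j-1})P^n_j$ holds, only if $P^n_j=\sum_{m=1}^{n}U^m_j$, not $\sum_{m=0}^{n-1}U^m_j$.
\item Without a factor $h$ in $P$, $h\sum_kR_k|Q_k|$ is of order $1/h$, not $O(\|\varrho(1+a)^2\|_1)$. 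This is compensated by the factor $h$ missing in your final bound on $|W^n|$.
\end{itemize}
Your non-negativity argument by induction coincides with the paper's.
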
 
 
\begin{proof}[Proof of Proposition \ref{prop.W.bounded}]
 	We set $P^n_j:= h \sum_{\ell=0}^n U^\ell_j$, and we notice again that, as $\sum_{j\geq 0}\tr_j U^{\ell}_j \equiv 0$, 
	$\sum_{j\geq 0} P^n_j \tr_j \equiv 0$. Next using \eqref{eq.closed.disc.elongation}, one obtains that:
 	$$
 	P^{n+1}_{k+1} - h U^0_{k+1} = P^n_k - h\sum_{j\geq 1} (\tr_j-\tr_{j-1})  (P^{n+1}_j-h U^0_j), \quad k\geq 0 
 	$$
 	which is the discrete equivalent of \eqref{eq.p}. 
	Now we want to avoid constant source terms in the left hand side of 
	the previous expression so that we define $\tP_j^n := P^n_j - h \sum_{k\in\{0,\dots,j\}} U^0_k$  
	which transforms the previous equality into:
 	$$
 	\tP_{k+1}^{n+1} = \tP_{k}^n - h \sum_{j\geq 1} (\tr_j-\tr_{j-1}) 
	 \left(\tP^{n+1}_j+ h \sum_{\ell = 0}^{j-1}U^0_\ell\right)
 	$$
 	But $\sum_{j\geq 1}( \tr_j-\tr_{j-1}) \sum_{\ell=0}^{j-1} U^0_{\ell} 
	= - \sum_{j\geq 1} \tr_j U^0_j - \tr_0 U^0_0 = 0$, leading to 
 	$$
 	\tP_{k+1}^{n+1} = \tP_{k}^n - h \sum_{j\geq 1} (\tr_j-\tr_{j-1})  \tP^{n+1}_j , \quad \forall k \geq 0, \; \forall n\geq 0.
 	$$

 	Applying the absolute value to the previous expression and integrating against $h \tr_k$ yields:
 	$$
 	I^n:= h \sum_{k\geq 0} \tr_k |\tP^{n+1}_{k+1}|  \leq h \sum_{k\geq 0} \tr_k |\tP_k^n|   
	- h \sum_{j\geq 1} (\tr_j-\tr_{j-1}) | \tP_j^n |  =: J^n
 	$$ 
 	But 
 	$I^n = h \sum_{k\geq 0} \tr_k |\tP^{n+1}_{k}| - h \sum_{k\geq 1}( \tr_k-\tr_{k-1}) |\tP^{n+1}_{k}| $, 
	and as $I^n \leq J^n$, this simplifies into:
 	$$
 	h \sum_{k\geq 0} \tr_k |\tP^{n+1}_{k}| \leq h \sum_{k\geq 0} \tr_k |\tP^{n}_{k}| 
 	$$
 	and by induction this provides:
 	$$
 	h \sum_{k\geq 0} \tr_k |\tP^{n+1}_{k}|  
 	\leq h \sum_{k\geq 0} \tr_k |\tP^{0}_{k}| 
 	\leq  h \sum_{k\geq 0} \tr_k \left|  h \sum_{\ell=0}^k U^0_\ell \right| 
 	\leq C(\mu_{2,h}, \mu_{1,h},\mu_0) 
 	$$
 	Returning to the definition of $(W^n)_{n\in\N}$,  one has 
 	$$
 	\delta W^{n+\nud} = - h \sum_{j\geq 1} (\tr_j-\tr_{j-1})U^{n+1}_j
 	$$
 	leading to 
 	$$
 	W^{n+1}-W^0 = -  \sum_{j\geq 1} (\tr_j-\tr_{j-1})(P^{n+1}_j- h U^0_j)
 	$$
 	and then 
 	$$
 \begin{aligned}
 		| W^{n+1}| \leq&  | W^0 | - h \sum_{j\geq 1} 
 		\frac{\tr_j-\tr_{j-1}}{h \tr_j} \tr_j \left( | P^{n+1}_j | +h | U_j^0 |\right) \\
 	& \leq | W^0 | + \zeta' \left\{ h \sum_{j\geq 0}  \tr_j  | P^{n+1}_j | +h^2 \sum_{j\geq 0}  \tr_j    | U_j^0 |\right\}\\
 	& \leq | W^0 | + h \zeta' \left\{ \sum_{j\geq 0}  \tr_j  | \tP^{n+1}_j | +h \sum_{j\geq 0}  \tr_j    \sum_{k=0}^{j} \left| U_k^0 \right| + c_1 \right\} \\
 	& \leq | W^0 | + h \zeta' \left\{ 2 h \sum_{j\geq 0}  \tr_j    \sum_{k=0}^{j} \left| U_k^0 \right| + c_1 \right\} 	
 	\leq C(\mu_{0,h},\mu_{1,h},\mu_{2,h}) 
 \end{aligned}
 	$$
Next we prove that $W^n \geq 0$ for all $n\geq 0$ by induction. 
First $W^0=\mu_{1,h}/(1-h \tr_0) \geq 0$ for $h$ 
small enough (and the bound is uniform by Lebesgue's Theorem for $h<h_0$). 
By definition of $W$, 
$W^{n} =  -nh \geq 0$ for $n<0$. 
Assume now that $W^k \geq 0$ for all $k \leq n$, then using \eqref{eq.disc.w}, 
$
(1-h\tr_0) W^{n+1} = h \sum_{j\geq 1} \tr_j W^{n+1-j} \geq 0
$, 
which concludes the proof.
 \end{proof}
 
 \subsection{Discrete resolvent and comparison principle}

First we discretise \eqref{eq.z}. To do so we recall that the meshsize is $h>0$ and the discrete kernel reads $(R_j)_{j\in\N}$ as in the previous section.

 By using successive convolutions of $(\tr_j)_{j\in\N}$, one can construct a non-negative $(Q_j)_{j\in\N}$ resolvent that solves 
 $$
 Q_n - h \sum_{j=0}^n Q_{n-j} \tr_j = \tr_n, \quad \forall n \geq  0.
 $$
 The discrete convolution of two sequences $(U_j)_{j\in\N}$ and $(V_j)_{j\in\N}$ is defined as:
 $$
 (U \star V)_n := h \sum_{j=0}^n U_{n-j} V_j , \quad \forall n \geq 0.
 $$
 The solution of \eqref{eq.disc.z} can be expressed as $Z = f + f \star Q$.

\begin{theorem}\label{thm.comp.disc}
	Assume that $-(R_j-R_{j-1})/(hR_{j}) \leq \zeta'$ for all $h>0$ and all $j\geq 1$, 
	and assumptions \ref{hypo.data}, then the solution of \eqref{eq.disc.z} 
	$(Z^n)_{n\in \{0,\dots,N\}}$ is bounded uniformly with respect to $h$.
\end{theorem}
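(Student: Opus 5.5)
The plan is to copy the continuous proof of Theorem \ref{thm.comparison} at the discrete level. The two ingredients are a discrete super-solution $S^n = U^n + \kappa W^n$, built from a linear profile and the initial layer of Proposition \ref{prop.W.bounded}, and a discrete comparison lemma that follows from the non-negativity of the resolvent $(Q_j)$.

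\textbf{Comparison lemma.} First I prove that if $V^n - h\sum_{j=0}^n V^{n-j}\tr_j \le G^n$ for $n\in\{0,\dots,N\}$, then $V^n \le G^n + (G\star Q)^n$. If moreover $S$ satisfies the reverse inequality with right-hand side $H\ge G$, then $V\le S$. I would argue directly by induction on $n$, without the resolvent:
\[
(1-h\tr_0)(S^n-V^n)\ \ge\ h\sum_{j\ge1}\tr_j(S^{n-j}-V^{n-j}) + (H^n-G^n)\ \ge\ 0 .
\]
This uses $1-h\tr_0>0$ for $h<h_0$, which holds because $\tr_0\le \kernel(0)$ by monotonicity.

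\textbf{Reduction to a non-negative equation.} Taking absolute values in \eqref{eq.disc.z} gives
\[
|Z^n| - h\sum_{j=0}^n |Z^{n-j}|\tr_j \le |f^n| + h\sum_{j>n}|Z_p^{n-j}|\tr_j =: \tf^n .
\]
Since $Z_p$ is Lipschitz, $|Z_p^{n-j}|\le c(1+(j-n)h)$. Hence $\tf^n \le \|f\|_\infty + c\,h\sum_{j>n}(1+(j-n)h)\tr_j$, which is bounded uniformly in $h$ by the discrete moments $\mu_{0,h}$ and $\mu_{1,h}$. These are controlled by $\|\kernel(1+a)^2\|_1$.

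\textbf{Super-solution.} Set $M:=\sup_n \tf^n$ and $U^n := C + \frac{M}{\mu_{1,h}}\, nh$ for all $n\in\mathbb Z$, which is negative for very negative $n$. For $n\ge0$ the full-line difference splits as
\[
U^n - h\sum_{j=0}^n U^{n-j}\tr_j = h\sum_{j\ge0}(U^n-U^{n-j})\tr_j + h\sum_{j>n} U^{n-j}\tr_j .
\]
The first term equals $\frac{M}{\mu_{1,h}}\,h\sum_j jh\,\tr_j = M$, where $\mu_{1,h}:=h\sum_j jh\tr_j$ is bounded below uniformly. The second term equals $C h\sum_{j>n}\tr_j - \frac{M}{\mu_{1,h}}h\sum_{j>n}(j-n)h\,\tr_j$, and its negative part is the troublesome discrete tail.

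The initial layer compensates this tail exactly. Rewriting \eqref{eq.disc.w} with $W^{n-j}=-(n-j)h$ for $j>n$ gives, for $n\ge0$,
\[
W^n - h\sum_{j=0}^n W^{n-j}\tr_j = h\sum_{j>n}(j-n)h\,\tr_j .
\]
So $S^n := U^n + \frac{M}{\mu_{1,h}}W^n$ satisfies $S^n - h\sum_{j=0}^n S^{n-j}\tr_j \ge M + C h\sum_{j>n}\tr_j \ge \tf^n$. The comparison lemma then gives $|Z^n|\le S^n \le C + \frac{M}{\mu_{1,h}}(T + \sup_n W^n)$. This bound is uniform in $h$ because Proposition \ref{prop.W.bounded} bounds $W$ uniformly, and the assumption on $-(\tr_j-\tr_{j-1})/(h\tr_j)$ is exactly what that proposition requires.

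\textbf{Main obstacle.} The hard part is the uniformity in $h$ of the discrete moment quantities: the lower bound on $\mu_{1,h}$, the bound on $h\sum_j (jh)^2\tr_j$ used inside Proposition \ref{prop.W.bounded}, and $1-h\tr_0$ staying away from $0$. I would handle these by comparing $h\sum_j (jh)^k\tr_j$ with $\int a^k\kernel$. This works because $\kernel$ is non-increasing and $\kernel\in L^1((1+a)^2)$, so the Riemann sums differ from the integrals by $O(h)$ times bounded quantities.
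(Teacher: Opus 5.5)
Your proposal is correct and follows the paper's proof. It uses the same super-solution: a linear profile plus $\frac{M}{\mu_{1,h}}W^n$, where the source of the initial layer exactly cancels the negative linear tail. It makes the same reduction of $|Z^n|$ to a non-negative datum $\tilde f$, and it relies in the same way on Proposition~\ref{prop.W.bounded} for uniformity in $h$.

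The differences are minor improvements:
\begin{itemize}
\item You prove the comparison step by a direct induction using only $1-h\tr_0>0$, instead of invoking positivity of the Neumann-series resolvent $Q$. This also makes the paper's tuning $C\ge|Z^0|$ unnecessary.
\item You use the global bound $M$ rather than the running maximum $\max_{k\ge j}|f^k|$.
\item You check explicitly, via the Lipschitz past and $\mu_{0,h},\mu_{1,h}$, that $\tilde f$ is bounded uniformly in $h$, which the paper leaves implicit.
\end{itemize}
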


 \begin{proof}[Proof of Theorem \ref{thm.comp.disc}
	(stated in Section \ref{sec:assumptions} as Theorem \ref{thm.comp.disc.main})]
 	First notice that thanks to the first hypothesis of the claim, 
	$h\sum_{j=0}^n \tr_j = 1 - h\sum_{j=n+1}^\infty  \tr_j \leq 1 - (1+\zeta' h)^{-n+1} \tr_0/\zeta' < 1$.
 	Then let's define $Q^m := \sum_{k=0}^m R^{k\star}$,  in the sense of sequences in $\ell^1$ such that 
	$(R\star R)_n = h\sum_{j=0}^n R_{n-j} R_j$ . 
 	One has that $Q = \lim_{m\to \infty} Q^m$ remains non-negative and finite for any $n\geq 0$. 
 	Indeed 
 	$|Q^m_n| \leq \sum_{k= 0}^m (h \sum_{j=0}^n R_j )^k \leq  \sum_{k= 0}^m  (1-\delta(n))^k \leq \delta(n)^{-1} < \infty$. 
	The solution $Z$ can then be expressed as $Z=f+f \star Q$ 
 	and as the verification is purely algebraic it follows the same steps as in \cite[Theorem 2.3.5 p. 44]{grip}. 
 	In the same spirit, one can extend \cite[Lemma 9.8.3]{grip} and show that if $S \geq S\star \tr + f$ 
	in the sense of convolution of sequences, then $S^n\geq Z^n$  for every $n\geq 0$, 
	where $Z$ is the sequence solving the equation $Z=\tr \star Z +f$.  

	The purpose is now to construct $S$. We set 
 	$$
 	S^n_f := \frac{|f|_\infty}{\mu_{1,h}} W^n + V^n ,
	$$
	where
	$$
	V^n:=  C + \frac{1}{\mu_{1,h}}  \begin{cases}
 		h \sum_{j=0}^n \max_{k \in \{ j,\dots,N\}} |f^k| & \text{if } n \geq 0, \\
 		h n |f|_\infty & \text{otherwise.}
 	\end{cases}
 	$$
 	then applying the integral operator gives: 
 	$$
 	\begin{aligned}
 		I^n &:=  S^n_f - h \sum_{j=0}^{n} S^{n-j}_f \tr_j \\
		& \geq h \sum_{j=1}^n (V^n-V^{n-j})\tr_j
 				+ h \sum_{j=n+1}^\infty (V^n-V^{n-j})\tr_j
 				+  h \sum_{j= n+1}^\infty V^{n-j} \tr_j \\
 				& \qquad
 				+ h \frac{|f|_\infty}{\mu_{1,h}} \sum_{j=n+1}^\infty h(j-n) \tr_j\\
 		& \geq \frac{h}{\mu_{1,h}} \sum_{j=1}^n h \!\sum_{k=n-j}^n \max_{\ell \in \{ k,\dots,N\}} |f^\ell| \tr_j \\
		& \qquad + \frac{h}{\mu_{1,h}} \sum_{j=n+1}^\infty \sum_{k=0}^n h \max_{\ell \in \{ k,\dots,N\}} |f^\ell| \tr_j
			+ \sum_{j=n+1}^\infty (V^0-V^{n-j} )\tr_j  \\
 		& \qquad+  h \sum_{j= n+1}^\infty V^{n-j} \tr_j  + h \frac{|f|_\infty}{\mu_{1,h}} \sum_{j=n+1}^\infty h(j-n) \tr_j\\
 		& \geq
		\frac{h}{\mu_{1,h}} \sum_{j=1}^n h j |f^n| \tr_j
		+ \frac{h}{\mu_{1,h}} \sum_{j=n+1}^\infty h n |f^n| \tr_j \\
		& \qquad + \frac{|f|_\infty}{\mu_{1,h}}\sum_{j=n+1}^\infty h({j-n} )\tr_j
 		+  h \sum_{j= n+1}^\infty V^{n-j} \tr_j \\
		& \qquad + h \frac{|f|_\infty}{\mu_{1,h}} \sum_{j=n+1}^\infty h(j-n) \tr_j \\
		& \geq |f^n|\\
 	\end{aligned}
 	$$
 	The first line in the last inequality is then greater than $|f^n|$ (the three summations with respect to $j$ reduce to 1) 
	and the second line cancels since the initial layer corrector's right hand side compensates exactly the
 	 tail of $U$ that was added and subtracted in the first lines above. One then compares:
 	 $$
 	 \left|Z^n\right|- h \sum_{j=0}^n \left| Z^{n-j} \right| \tr_j 
	 \leq 
	 |\tf^n|  \leq S_{\tf} - h \sum_{j=0}^n S^{n-j}_{\tf} \tr_j
 	 $$
 	where $\tf^n := f^n + h \sum_{j=n+1}^\infty |Z^{n-j}| \tr_j$, which implies that $|Z^n| \leq S^n$ as soon as $C \geq |Z^0|$.
 \end{proof}

\section{Application  to delayed sweeping processes}\label{sec:application}

We now apply the comparison principle developed in the previous sections to study delayed sweeping processes.
Recall Hypothesis \ref{hypo.convex.main} on the family of closed convex sets.

In this section we solve the following discrete problem: for any $n\geq 0$,
 \begin{equation}\label{eq.sweeping.projection}
	 X^n := P_{C^n}(\overline{X}^n), \quad \overline{X}^n := \frac{h \sum_{j\geq 1} X^{n-j} \tr_j}{h \sum_{j\geq 1} \tr_j}.
\end{equation}
where $P_{C^n}$ is the projection onto the convex set $C^n$ defined above. 

This problem can be reformulated as the following minimization problem: for any $n\geq 0$,
\begin{equation}\label{eq.sweeping}
	 X^n := \argmin_{W \in C^n} \cE_n(W), \quad \cE_n(W) := \frac{h}{2}\sum_{j\geq1} (W-X^{n-j})^2 \tr_j.
\end{equation}
 
\begin{proposition}[Minimization principle]
	The two minimizing problems \eqref{eq.sweeping.projection} and \eqref{eq.sweeping} are equivalent.
\end{proposition}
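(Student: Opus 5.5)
The plan is to expand the quadratic energy $\cE_n$ and show that, up to a positive factor and an additive constant independent of $W$, it equals $\tfrac12|W-\overline{X}^n|^2$. Once this is done, minimizing $\cE_n$ over $C^n$ is the same as minimizing the distance to $\overline{X}^n$ over $C^n$. By definition, that minimizer is $P_{C^n}(\overline{X}^n)$.

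Set $m_h := h\sum_{j\ge1}\tr_j$. Since $\tr_j\ge0$ and $\kernel$ has unit mass, $m_h\le 1$. We have $m_h>0$ because $\kernel$ is non-increasing, non-negative and of unit mass, so it cannot vanish on $(h,\infty)$ (otherwise $m_h=0$ would force all the mass into $[0,h]$, which is impossible for a monotone Lipschitz kernel with $\kernel\in L^1(\rr,(1+a)^2)$ once $h<h_0$; at worst I would simply assume $h<h_0$ here). We also use that the past values $(X^{n-j})_{j\ge1}$ are bounded: $X^{n-j}\in C^{n-j}\subset B(0,R)$ for $n-j\ge 0$, and the past data are bounded. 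Together with $h\sum_j \tr_j<\infty$, this makes all the series below absolutely convergent.

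Expanding $|W-X^{n-j}|^2=|W|^2-2W\cdot X^{n-j}+|X^{n-j}|^2$ and summing against $\tfrac h2\tr_j$ gives
\[
\cE_n(W)=\frac{m_h}{2}\Bigl(|W|^2-2W\cdot\overline{X}^n\Bigr)+\frac h2\sum_{j\ge1}\tr_j|X^{n-j}|^2
=\frac{m_h}{2}|W-\overline{X}^n|^2+K_n,
\]
where $K_n$ does not depend on $W$. This is the discrete parallel-axis (Huygens) identity.

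Consequently $\argmin_{C^n}\cE_n=\argmin_{C^n}|W-\overline{X}^n|$. Because $C^n$ is closed, convex and nonempty, the function $W\mapsto|W-\overline{X}^n|^2$ is strictly convex and coercive. The minimizer therefore exists, is unique, and equals $P_{C^n}(\overline{X}^n)$, so the two definitions produce the same sequence $(X^n)$ for every $n\ge0$. An equivalent route is through the first-order conditions: $0\in\nabla\cE_n(X^n)+N_{C^n}(X^n)$ reads $m_h(X^n-\overline{X}^n)\in -N_{C^n}(X^n)$, and the normal cone is invariant under positive scaling. This gives the projection characterization directly.

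The only real obstacle is justifying convergence of the infinite sums, and positivity of $m_h$, so that the expansion is legitimate. Both follow from the boundedness of the past values and the summability of $\tr$ noted above.
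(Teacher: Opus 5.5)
Your proposal is correct and is essentially the paper's argument: the paper adds and subtracts $\overline{X}^n$ inside $\cE_n$, so the cross term vanishes by the definition of the weighted average, and it is left with $\frac12(1-hR_0)|W-\overline{X}^n|^2$ plus a $W$-independent remainder — exactly your $\frac{m_h}{2}|W-\overline{X}^n|^2+K_n$, since unit mass gives $m_h=1-hR_0$. Your side justifications go beyond the paper, with two small corrections: positivity of $m_h$ follows directly from hypothesis (iv), which forces $\kernel(a)\ge \kernel(0)e^{-\zeta a}>0$, and the past data are only assumed Lipschitz rather than bounded, though the finite second moment of $\kernel$ still makes all the series converge.
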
 
 
\begin{proof}
	By definition of the projection onto a convex set: given $\overline{X}^n \in \mathbb{R}^d$,
	there exists a unique $X^n \in C^n$ s.t.
	$$
	X^n = \argmin_{W \in C^n} | W - \overline{X}^n |^2 
	$$
	Now we start from the energy functional $\cE_n$ and we compute:
	$$
	\begin{aligned}
		\cE_n(W) & = \frac{h}{2} \sum_{j\geq 1} 
		\left( W - \overline{X}^n + \overline{X}^n - X^{n-j} \right)^2 \tr_j \\
		& = \frac{h}{2} \sum_{j\geq 1} \left( W - \overline{X}^n \right)^2 \tr_j
		+ h \sum_{j\geq 1} (W - \overline{X}^n)(\overline{X}^n - X^{n-j}) \tr_j \\
		& \quad + \frac{h}{2} \sum_{j\geq 1} (\overline{X}^n - X^{n-j})^2 \tr_j  \\
		& = \frac{1}{2}(1-h \tr_0)| W - \overline{X}^n |^2 +  R((X^{n-j})_{j\geq 1})
	\end{aligned}
	$$
	where the remainder term $R$ does not depend on $W$. This concludes the proof.
\end{proof}

 \subsection{Energy estimates}
 
\begin{proposition}\label{prop.nrj}
	Assume that the sequence $(\tr_j)_{j\in \N}$ is decreasing and that the convex sets satisfy
	Hypothesis \ref{hypo.convex.main}. 
	Then the discrete sweeping sequence $(X^n)_{n\ge0}$ defined by \eqref{eq.sweeping}	 
	satisfies the energy estimate:	
	\[
	\cE_{N}(X^{N}) +  \sum_{n=0}^{N-1} D_n \le e^{2 T} \cE_{0}(X^0) + e^{2 T} \sum_{n=0}^{N-1} e_n,
	\]
	where $D_n= \frac{h}{2} \sum_{j\ge1} (\tr_{j}-\tr_{j+1}) | X^{n} - X^{n-j}|^2 \geq 0$ is the dissipation term and
	$e_n:= e(C^n,C^{n+1})^2/h$.
\end{proposition}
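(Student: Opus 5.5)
The plan is to derive a one-step energy inequality comparing $\cE_{n+1}(X^{n+1})$ with $\cE_n(X^n)$ and then iterate it with a discrete Gronwall argument. The minimality of $X^{n+1}$ is tested against the competitor $Y^{n+1} := P_{C^{n+1}}(X^n)\in C^{n+1}$. Since $X^n\in C^n$, we have $|Y^{n+1}-X^n| = d(X^n,C^{n+1}) \le e(C^n,C^{n+1})$, where $e$ is the excess, i.e. the one-sided Hausdorff distance. Hence
\[
\cE_{n+1}(X^{n+1}) \le \cE_{n+1}(Y^{n+1}).
\]

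\textbf{Step 1: the exact dissipation identity.} Evaluate $\cE_{n+1}$ at the old position $X^n$. Shifting the index $j = k+1$ gives
\[
\cE_{n+1}(X^n) = \frac h2\sum_{k\ge0} \tr_{k+1}|X^n-X^{n-k}|^2 = \frac h2\sum_{k\ge1}\tr_{k+1}|X^n-X^{n-k}|^2 ,
\]
since the $k=0$ term vanishes. Therefore
\[
\cE_{n+1}(X^n) = \cE_n(X^n) - D_n, \qquad D_n = \frac h2\sum_{j\ge1}(\tr_j-\tr_{j+1})|X^n-X^{n-j}|^2 \ge 0,
\]
where nonnegativity uses that $(\tr_j)$ is decreasing. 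This is exactly where the memory kernel plays the role of the time derivative: the ``shift in age'' of the past positions produces the dissipation.

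\textbf{Step 2: moving from $X^n$ to $Y^{n+1}$.} Expand $|Y^{n+1}-X^{n+1-j}|^2 = |(Y^{n+1}-X^n)+(X^n-X^{n+1-j})|^2$ and apply Young's inequality with parameter $h$:
\[
|a+b|^2 \le (1+h)|b|^2 + (1+1/h)|a|^2 .
\]
Using the mass bound $h\sum_{j\ge1}\tr_j\le 1$ (unit mass of $\kernel$), this yields
\[
\cE_{n+1}(Y^{n+1}) \le (1+h)\,\cE_{n+1}(X^n) + \frac{1+h}{2h}\, e(C^n,C^{n+1})^2 .
\]
Combining with Step 1 gives
\[
\cE_{n+1}(X^{n+1}) + (1+h) D_n \le (1+h)\,\cE_n(X^n) + (1+h)\, e_n ,
\]
with $e_n = e(C^n,C^{n+1})^2/h$. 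Dropping the harmless factor $1+h\ge1$ in front of $D_n$ and bounding $(1+h)\le e^{h}$ gives
\[
\cE_{n+1}(X^{n+1}) + D_n \le e^{h}\cE_n(X^n) + e^{h} e_n .
\]

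\textbf{Step 3: iteration.} Multiply the one-step inequality by $e^{-(n+1)h}$ and telescope from $n=0$ to $N-1$. Then $\cE_N(X^N)$ plus the weighted sum of the $D_n$ is bounded by $\cE_0(X^0)+\sum_n e_n$, up to factors $e^{Nh}\le e^{T}$. Since all $D_n\ge 0$ and $e^{T}\le e^{2T}$, the weights can be removed crudely to reach the stated bound with constant $e^{2T}$; the slack in $e^{2T}$ absorbs any extra $(1+h)$ factors, e.g. if one prefers Young with parameter $2h$.

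\textbf{Main obstacle.} I expect the delicate point to be Step 2, namely controlling the cross term without losing a factor $1/h$ on the energy itself. The Young parameter must be chosen of order $h$, so that the energy is only multiplied by $1+O(h)$, which is what makes Gronwall work over $N=T/h$ steps. The price is paid entirely by the set-motion term, which then appears as $e(C^n,C^{n+1})^2/h$. That is precisely the $H^1$-like quantity controlled by \eqref{eq.bv.c.main} in Hypothesis \ref{hypo.convex.main}, because the excess is bounded by the Hausdorff distance. One should also check that the mass bound $h\sum_{j\ge1}\tr_j\le1$ holds uniformly in $h$, which follows from the normalization of $\kernel$.
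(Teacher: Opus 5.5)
Your proof is correct and follows essentially the same route as the paper: you test the minimality of $X^{n+1}$ against $P_{C^{n+1}}(X^n)$, use the index-shift identity $\cE_{n+1}(X^n)=\cE_n(X^n)-D_n$, apply a Young inequality with weight of order $h$ so that the set motion enters as $e(C^n,C^{n+1})^2/h$, and close with a discrete Gronwall iteration. The differences are cosmetic. You apply Young directly to $|a+b|^2$ with parameter $h$, which yields $e^{T}$ instead of the paper's $e^{2T}$. Your telescoping with the weights $e^{-(n+1)h}$ also tracks the factors multiplying $D_n$ more carefully than the paper's iteration step does.
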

 
 \begin{proof}[Proof of Proposition \ref{prop.nrj}, stated in Section \ref{sec:assumptions} as Proposition \ref{prop.nrj.main}]
 	The minimization principle \eqref{eq.sweeping} gives
 	\[
 	\cE_{n+1}(X^{n+1}) \le \cE_{n+1}(P_{C^{n+1}}(X^n)).
 	\]
 	Adding and subtracting $X^n$ inside the energy functional leads to
 	\[
 	\begin{aligned}
 		\cE_{n+1}(X^{n+1})
 		& \le \frac{1-h\tr_0}{2}
 		| X^n - P_{C^{n+1}}(X^n)|^2 \\
		& \quad + h\sum_{j\ge1}( X^n - P_{C^{n+1}}(X^n),\,X^{n+1-j}-X^n)\tr_j \\
 		& \quad + \frac{h}{2} \sum_{j\geq 1} | X^{n+1-j} - X^n|^2 \tr_j \\
 	\end{aligned}
 	\]
 	We denote $e_n:= e(C^n,C^{n+1})^2/h$, and write accordingly
 	\[
 	\begin{aligned}
 		\cE_{n+1}(X^{n+1})
 		& \le \frac{h}{2} e_n
 		+ h \sum_{j\ge1} e(C^n,C^{n+1})|X^{n+1-j}-X^n|\tr_j \\
 		& \quad +  \frac{h}{2} \sum_{j\geq 1} | X^{n+1-j} - X^n|^2 \tr_j .
 	\end{aligned}
 	\] 
 	Using Young's inequality on the second term on the right-hand side gives
 	\[
 	\begin{aligned}	
 		\cE_{n+1}(X^{n+1})
 		& \le (1+2 h ) e_n 
 		+ (1+2 h) \cE_{n+1}(X^n).		
 	\end{aligned}
 	\]
 	Then using the definition of the energy functional and rearranging terms leads to
 	\[	
 	\cE_{n+1}(X^n) = \cE_{n}(X^n) - \frac{h}{2} \sum_{j\ge1} (\tr_{j+1}-\tr_j) | X^{n} - X^{n-j}|^2 =: \cE_{n}(X^n) - D_n,
 	\]
 	where $D_n= \frac{h}{2} \sum_{j\ge1} (\tr_{j}-\tr_{j+1}) | X^{n} - X^{n-j}|^2 \geq 0$ is the dissipation term.
 	Putting everything together, we obtain
 	\[
 	\cE_{n+1}(X^{n+1}) + (1+2 h) D_n \le (1+2 h) \cE_{n}(X^n) + (1+2 h ) e_n.
 	\]
 	Iterating this inequality from $n=0$ to $n=N-1$ gives
 	\[
 	\cE_{N}(X^{N}) + (1+2 h) \sum_{n=0}^{N-1} D_n \le (1+2 h)^N \cE_{0}(X^0) + (1+2 h ) \sum_{n=0}^{N-1} (1+2 h)^{N-1-n} e_n.
 	\]
 	Since $(1+2 h)^N \le e^{2 T}$ and $\sum_{n=0}^{N-1} (1+2 h)^{N-1-n} e_n \le e^{2 T} \sum_{n=0}^{N-1} e_n$, the proof is complete.	
 \end{proof}
 
 \begin{remark}
 	The energy $\cE_{N}(X^N)$ completes the dissipation term at step $n=N$ since one writes:
 	$$
 	\begin{aligned}
	h \sum_{p=0}^N & \sum_{j\geq1} (\tr_j - \tr_{j+1}) \left| X^{p}- X^{p-j}\right|^2 \\
	& \leq 2 \cE_{N}(X^{N})
	+ 2 h\sum_{p=0}^{N-1} \sum_{j\geq1} (\tr_j - \tr_{j+1}) \left| X^{p}- X^{p-j}\right|^2
	\end{aligned}
 	$$
 \end{remark}
 
\begin{corollary}\label{coro.dissip}
	Under the previous hypotheses, 
	$$
	\sum_{p= 0}^N h 
	\left( \sum_{j\ge1} (\tr_{j-1} - \tr_{j}) \left| X^{p} - X^{p-j}\right| \right)^2 \leq  C < \infty
	$$
	uniformly with respect to the discretization step $h$.
\end{corollary}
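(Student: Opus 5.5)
The plan is to apply Cauchy--Schwarz to the inner sum and then bound the result by the dissipation controlled in Proposition~\ref{prop.nrj}, together with the completion argument of the preceding remark. The weights $\tr_{j-1}-\tr_j$ are shifted by one index with respect to those appearing in $D_n$, so a reindexing step is needed.

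First I would fix $p$ and split each weight as $(\tr_{j-1}-\tr_j) = (\tr_{j-1}-\tr_j)^{1/2}(\tr_{j-1}-\tr_j)^{1/2}$. Cauchy--Schwarz then gives
\[
\Big(\sum_{j\ge1}(\tr_{j-1}-\tr_j)|X^p-X^{p-j}|\Big)^2 \le \Big(\sum_{j\ge1}(\tr_{j-1}-\tr_j)\Big)\sum_{j\ge1}(\tr_{j-1}-\tr_j)|X^p-X^{p-j}|^2 .
\]
The first factor telescopes to $\tr_0 - \lim_j \tr_j = \tr_0 \le \|\kernel\|_\infty$, since $\kernel$ is Lipschitz and integrable and hence $\tr_j\to0$. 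This factor is therefore bounded uniformly in $h$.

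Next I would compare the second factor with the dissipation. Reindexing gives $\sum_{j\ge1}(\tr_{j-1}-\tr_j)|X^p-X^{p-j}|^2 = (\tr_0-\tr_1)|X^p-X^{p-1}|^2 + \sum_{j\ge1}(\tr_j-\tr_{j+1})|X^p-X^{p-j-1}|^2$. I would handle the shifted term in one of two ways.
\begin{itemize}
\item Use $|X^p-X^{p-j-1}|^2\le 2|X^p-X^{p-j}|^2+2|X^{p-j}-X^{p-j-1}|^2$; after summing in $p$, the second piece is $\le 2\sum_{j}(\tr_j-\tr_{j+1})\sum_p|X^{p-j}-X^{p-j-1}|^2$.
\item More simply, observe that $(\tr_{j-1}-\tr_j)\le \zeta' h \tr_j$ by the discrete lemma, and bound directly using $|X^p-X^{p-j}|\le 2R$ from Hypothesis~\ref{hypo.convex.main}(i) for $p\ge0$, with the Lipschitz past data handling $p-j<0$.
\end{itemize}
With the second route, $h\sum_{j\ge1}(\tr_{j-1}-\tr_j)|X^p-X^{p-j}|^2\le \zeta' h\sum_j \tr_j |X^p-X^{p-j}|^2 \le 2\zeta'\cE_p(X^p)$. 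Summing over $p$ then gives $h\sum_{p=0}^N(\cdot)\le 2\tr_0\zeta'\, h\sum_{p=0}^N \cE_p(X^p)/h$. The factor $1/h$ here is dangerous.

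The honest route is therefore the first one: the weighted-square sum must be controlled by $\sum_p D_p$ plus $\cE_N(X^N)$, as in the remark. Proposition~\ref{prop.nrj} bounds these by $e^{2T}(\cE_0(X^0)+\sum e_n)$. This quantity is uniform in $h$: $\cE_0(X^0)$ is bounded by boundedness of the past data, and $\sum_n e_n \le \sum_n d_H(C^n,C^{n+1})^2/h$ is bounded by \eqref{eq.bv.c.main}.

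The main obstacle is the index mismatch between $\tr_{j-1}-\tr_j$ and $\tr_j-\tr_{j+1}$, which are the weights appearing in $D_p$. I would first try to bound $\tr_{j-1}-\tr_j \le C(\tr_j-\tr_{j+1})$. This is not implied by the hypotheses, so instead I would keep the $j=1$ term separately and use the triangle inequality argument above. That leaves me needing $h^{-1}\sum_p|X^p-X^{p-1}|^2$-type control weighted by $h(\tr_0-\tr_1)\le \zeta' h^2\tr_1$. This is harmless since $|X^p-X^{p-1}|\le 2R$ and there are $N=T/h$ terms, giving $O(h)$. I would also redo the energy estimate with the shifted dissipation $\cE_{n+1}(X^n)$ computed relative to $\cE_n$ if the constants do not close.
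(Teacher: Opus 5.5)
\textbf{Where the argument breaks.} The route you discard is exactly the paper's proof. You discard it because of a dropped factor of $h$. The ratio bound $\tr_{j-1}-\tr_j\le\zeta' h\,\tr_j$ gives
\[
h\sum_{j\ge1}(\tr_{j-1}-\tr_j)|X^p-X^{p-j}|^2\le\zeta' h^2\sum_{j\ge1}\tr_j|X^p-X^{p-j}|^2=2\zeta' h\,\cE_p(X^p),
\]
not $\zeta' h\sum_j\tr_j|X^p-X^{p-j}|^2$. Combined with your Cauchy--Schwarz step, the left-hand side of the corollary is at most
\[
2\zeta'\tr_0\,h\sum_{p=0}^N\cE_p(X^p)\le 2\zeta'\tr_0(T+h)\max_{p\le N}\cE_p(X^p).
\]
Here $\cE_p(X^p)$ is bounded uniformly in $p$ and $h$ by Proposition~\ref{prop.nrj} applied with $N$ replaced by $p$. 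No factor $1/h$ appears. The paper's display makes the same slip: it writes $=2\zeta'\cE_p(X^p)$ and ends with $\sum_{p=0}^N\cE_p(X^p)$. It is precisely the missing factor $h$ that makes that final sum uniform.

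\textbf{Your replacement argument.} The route through $D_p$ is unnecessary and, as written, not closed. After the split $|X^p-X^{p-j-1}|^2\le 2|X^p-X^{p-j}|^2+2|X^{p-j}-X^{p-j-1}|^2$, the second piece $2h\sum_{p=0}^N\sum_{j\ge1}(\tr_j-\tr_{j+1})|X^{p-j}-X^{p-j-1}|^2$ carries total weight $\tr_1=O(1)$, not $O(h)$. Nothing available at this point controls $h^{-1}\sum_q|X^q-X^{q-1}|^2$; that $H^1$ bound is the content of Theorem~\ref{thm:circular.main}, proved later and only for balls. Your final paragraph deals only with the $j=1$ term.

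This leftover piece can be closed only with the crude bound $|X^q-X^{q-1}|\le 2R$ from Hypothesis~\ref{hypo.convex.main}(i), together with the bounded past data and $h(N+1)\le T+h$. But once you allow that bound, the corollary is immediate and the dissipation plays no role. Indeed $\sum_{j\ge1}(\tr_{j-1}-\tr_j)|X^p-X^{p-j}|\le\tr_0\sup_j|X^p-X^{p-j}|$, so the whole sum is at most $(T+h)\,\tr_0^2\sup_{p,j}|X^p-X^{p-j}|^2$ for bounded past data. In particular, the claim that the weighted square sum \emph{must} be controlled by $\sum_p D_p$ is wrong. The index mismatch between $\tr_{j-1}-\tr_j$ and $\tr_j-\tr_{j+1}$ is an obstacle only if you insist on that route.
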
 
 
\begin{proof}
	Since the sequence $(\tr_{j})_{j\geq 0}$ is non-increasing and of mass 1 (indeed $h \sum_{j\geq 0} \tr_{j}=1$), one has that
	$$
	\sum_{j\ge1} \tr_{j-1} - \tr_{j} = \tr_0 \leq (h \Lip_\kernel/2  + \kernel(0))/\mu_{0} < \infty
	$$
 where we have used the Lipschitz hypothesis in Assumptions \ref{hypo.data}.
	Moreover, using the discrete ratio bound $(\tr_{j-1}-\tr_j) \leq \zeta' h \tr_j$ for $j\geq 1$,
	one has
	$$
	h \sum_{j\geq 1} (\tr_{j-1} -\tr_{j}) |X^p-X^{p-j}|^2
	\leq \zeta' h^2 \sum_{j\geq 1} \tr_j |X^p-X^{p-j}|^2 = 2\zeta' \cE_p(X^p).
	$$
	Then using Cauchy-Schwarz,
	one writes:
	$$
	\begin{aligned}
		\sum_{p= 0}^N & h \left( \sum_{j\geq1} (\tr_{j-1} -\tr_{j}) |X^p-X^{p-j}| \right)^2 \\
		& \leq \sum_{p=0}^N h \sum_{j\geq 1} (\tr_{j-1} -\tr_{j})
								\sum_{j\geq 1} (\tr_{j-1} -\tr_{j}) |X^p-X^{p-j}|^2\\
		& \leq  \tr_0 \sum_{p= 0}^N h \sum_{j\geq 1}  (\tr_{j-1} -\tr_{j}) |X^p-X^{p-j}|^2
		\leq  2 \zeta' \tr_0 \sum_{p= 0}^N  \cE_p(X^p)  < \infty
	\end{aligned}
	$$
	where the last bound follows from Proposition \ref{prop.nrj}.
\end{proof}

\begin{corollary}\label{coro.lag}
	Under the previous hypotheses, for $h$ small enough, one has the uniform bound:
	$$
	\left|\overline{X}^n - X^n\right| \leq C 
	$$
\end{corollary}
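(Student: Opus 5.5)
Since $C^n\subset B(0,R)$, we have $|X^n|\le R$. The only delicate point is therefore to bound $\overline{X}^n$. If the past condition were bounded, then $\overline{X}^n$ would be a convex combination of bounded values and the claim would be immediate. The difficulty is that the past is only Lipschitz, so $X^{n-j}$ may grow linearly in $j$ for $n-j<0$.

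The plan is therefore to split the average into present and past contributions. We write
\[
\overline{X}^n - X^n = \frac{h\sum_{j\geq1}(X^{n-j}-X^n)\tr_j}{h\sum_{j\geq1}\tr_j},
\]
and the denominator satisfies $h\sum_{j\ge1}\tr_j = 1-h\tr_0 \ge 1/2$ for $h$ small enough. Indeed, $\tr_0 \le (h\Lip_\kernel/2+\kernel(0))/\mu_0$ is bounded, as shown in the proof of Corollary \ref{coro.dissip}. It then suffices to bound the numerator.

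We split the numerator into two sums. The first runs over $1\le j\le n$: there both $X^{n-j}$ and $X^n$ lie in $B(0,R)$, so the contribution is at most $2R\,h\sum_{j}\tr_j\le 2R$. The second runs over $j>n$: there $X^{n-j}=\zp$ evaluated at a negative time, and the Lipschitz bound gives $|X^{n-j}|\le |\zp(0)|+L\,(j-n)h \le |\zp(0)|+L\,jh$. This contribution is therefore bounded by
\[
(|\zp(0)|+R)\,h\sum_{j}\tr_j + L\,h\sum_{j\ge1} jh\,\tr_j \le C\big(1+\mu_{1,h}\big).
\]
The discrete first moment $\mu_{1,h}=h\sum_j jh\,\tr_j$ is bounded uniformly in $h$ by comparison with $\int (1+a)\kernel(a)\,da$, because $jh\le a$ on $[jh,(j+1)h]$. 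This step uses $\kernel\in L^1(\rr,(1+a)^2)$.

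The main, if mild, obstacle is justifying this uniform control of the discrete moments, together with the case where the past is given only through $X^{-1}$, as in Theorem \ref{thm:circular.main}; in that case the second sum is trivially bounded. The hypothesis ``$h$ small enough'' is used only to keep $1-h\tr_0$ away from zero. As an alternative one could invoke the energy bound of Proposition \ref{prop.nrj} with Cauchy--Schwarz: by Jensen, $|\overline{X}^n-X^n|^2\le 2\cE_n(X^n)/(1-h\tr_0)^2$. This would also work, provided $\cE_0(X^0)$ is finite, which again rests on the second moment of $\kernel$. I would give the direct argument and mention the energy route as a cross-check.
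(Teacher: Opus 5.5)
Your direct argument is correct, and it follows a different route from the paper's. The paper's proof is exactly what you list as the cross-check. It writes $\overline{X}^n - X^n$ as a weighted average of $X^{n-j}-X^n$ with weights $hR_j/(1-hR_0)$ and applies Cauchy--Schwarz to get $|\overline{X}^n-X^n|^2\le 2\cE_n(X^n)/(1-hR_0)$. There is only one power of $1-hR_0$, because $h\sum_{j\ge1}R_j=1-hR_0$ cancels the other; your squared denominator is valid but loose. The paper then bounds $\cE_n(X^n)$ uniformly via the energy estimate of Proposition~\ref{prop.nrj}.

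Your route uses only three ingredients: $C^n\subset B(0,R)$ from Hypothesis~\ref{hypo.convex.main}\,i), the linear growth of the Lipschitz past, and $\mu_{1,h}\le\mu_1$. It therefore needs neither the energy estimate, nor the excess bound of Hypothesis~\ref{hypo.convex.main}\,ii), nor the second moment required for $\cE_0(X^0)<\infty$, and it gives an explicit constant. The paper's route has two advantages of its own. First, it is independent of the uniform boundedness of the sets, since the energy proof only uses the excesses $e(C^n,C^{n+1})$. Second, it ties the lag directly to $\cE_n(X^n)$, the quantity the paper reuses later, for instance to bound $d^{n+1}$ in the proof of Theorem~\ref{thm:circular.main}.
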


\begin{proof}
	Using Cauchy-Schwarz inequality, one writes:
	$$
	\begin{aligned}
		\left|\overline{X}^n - X^n\right|^2
		& = \left| \frac{h \sum_{j\geq 1} (X^{n-j} - X^n) \tr_j}{1-h \tr_0} \right|^2 \\
		& \leq  \frac{h \sum_{j\geq 1} (X^{n-j}-X^n)^2 \tr_j }{1-h \tr_0} \times \frac{h \sum_{j\geq 1} \tr_j}{1-h \tr_0}
		\leq \frac{2 \cE_n(X^n)}{1-h \tr_0} < \infty
	\end{aligned}
	$$
	where we have used the energy estimate provided by Proposition \ref{prop.nrj}.	
\end{proof}
\begin{remark}
	We notice that contrarily to the standard sweeping process without delay, 
	the distance between the projected point $X^n$ and the unconstrained point $\overline{X}^n$ 
	remains only bounded whereas in the standard case it is $h$ close. 
	This greatly complicates the 
	analysis and does not allow to obtain straightforward Lipschitz or $BV$ estimates in time as in the classical cases \cite{mor77}. 
\end{remark}

\subsection{The case of a moving circular set}\label{sec:circular}
 
In this section we prove Theorem \ref{thm:circular.main}. We consider the case where the convex sets are moving circles:
 \begin{equation}
 	C(t):= \overline{B}(c(t), R) = \{x \in \mathbb{R}^m:  \phi_t(x):= |x - c(t)|^2 -	 R^2 \leq 0\}
 \end{equation}
 with $c \in H^1([0,T];\mathbb{R}^m)$ and $R \in \mathbb{R}_+$.
We discretize the sets as $C^n:= C(nh)$ for $n=0,\dots,N$ with $N = \lfloor T/h \rfloor$.
The corresponding discrete sweeping process is defined by \eqref{eq.sweeping}. Under these assumptions,
Hypothesis \ref{hypo.convex.main} is satisfied since
 $$ d_H(C^{n},C^{n-1}) \leq |c(nh)-c((n-1)h)| $$ 
 and $c$ being in $H^1([0,T];\mathbb{R}^m)$ implies that 
 $$
 \sum_{n=1}^{N} d_H(C^{n},C^{n-1})^2/h \leq \sum_{n=1}^{N} |c(nh)-c((n-1)h)|^2/h \leq |c|_{H^1([0,T];\mathbb{R}^m)}^2 < \infty.
 $$
We denote by $\varphi_n(X):= |X - c^n|^2 - R^2$ where $c^n:= c(nh)$. These constraints
are qualified in the sense of \cite{Ciarlet89} since $\nabla \varphi_n(X) = 2 (X - c^n) \neq 0$ 
for all $X$ such that $\varphi_n(X) =0$.	This provides the necessary condition
in order to  write Euler Lagrange equations (in their Kuhn and Tucker version)  
associated to the minimization problem \eqref{eq.sweeping}, they read:
 \begin{equation}\label{eq.sweep.eul}
 	h \sum_{j\geq 0} (X^n - X^{n-j})\tr_j  + \lambda^n \nabla \varphi_n (X^n)=0, \quad \forall n\geq 0.
 \end{equation}
Since $\mu_0 = 1$, the Euler-Lagrange equation identifies $\lambda^n \nabla \varphi_n(X^n) = \overline{X}^n - X^n$.
We denote the \emph{projection displacement increment}
$$\delta P^{n+\nud} := \lambda^{n+1} \nabla \varphi_{n+1}(X^{n+1}) - \lambda^{n} \nabla \varphi_{n}(X^{n})$$
and recall the \emph{step} $\delta X^{n+\nud} := X^{n+1} - X^{n}$.

\begin{proof}[Proof of Theorem \ref{thm:circular.main}]
 	We recall the definition of $\varphi_n(X):= |X - c^n|^2 - R^2$ where $c^n:= c(nh)$.
 	From the Euler-Lagrange equation \eqref{eq.sweep.eul}, setting the elongation variable as 
 	$U^n_j:= X^n-X^{n-j}$ for $j\geq 1$, it  satisfies by definition 
 	$$
 	U^{n+1}_{j+1} = U^n_j + X^{n+1} - X^n 
 	$$
 	which summed against $\tr_j$ gives
 	\begin{equation}\label{eq:inner.prod}
 		h \sum_{j\geq 0} U^{n+1}_{j+1} \tr_j = h \sum_{j\geq 0} U^n_j \tr_j + \mu_{0} (X^{n+1}-X^n) = 
 		h \sum	_{j\geq 0} U^n_j \tr_j + X^{n+1}-X^n , 
 	\end{equation}
 	since $\mu_0 = h \sum_j \tr_j =1$. 
 	Adding and subtracting $U^{n+1}_j$ in the left hand side gives
 	$$
 	\begin{aligned}
	h \sum_{j\geq 0} U^{n+1}_{j+1} \tr_j
	& = h \sum_{j\geq 0} U^{n+1}_j \tr_j + h \sum_{j\geq 0} (U^{n+1}_{j+1}-U^{n+1}_j) \tr_j \\
 	& = h \sum_{j\geq 1} U^{n}_j \tr_j + (X^{n+1}-X^n).
	\end{aligned}
 	$$
 	By a change of indices one recovers that
	$$\sum_{j\geq 0} (U^{n+1}_{j+1}-U^{n+1}_j) \tr_j = \sum_{j\geq 1}
 	U^{n+1}_j(\tr_{j-1}-\tr_{j}) =: d^{n+1}.$$
	We use Cauchy-Schwarz inequality to estimate $d^{n+1}$:
	$$
	\begin{aligned}
		(d^{n+1})^2 & \leq
		h \sum_{j\geq 1} |U^{n+1}_j|^2 \tr_j \cdot
		h \sum_{j\geq 1} \left(\frac{R_{j-1}-R_j}{h}\right)^2 \frac{1}{\tr_j} \\
		& \leq 2 \cE_{n+1}(X^{n+1}) C_R =: C_R'
	\end{aligned}
	$$
	where we have used the definition of the energy functional $\cE_{n+1}$ and hypothesis \eqref{eq.kernel.estimate.circular}. 
	Here we underline that although $d^{n+1}$ seems to be related to the dissipation term of the energy estimates from Proposition \ref{prop.nrj},
 the index shift prevents from using Corollary \ref{coro.dissip} to obtain a uniform bound on $d^{n+1}$, 
 and we were forced to rely on the alternative above.

 	The Euler-Lagrange equation \eqref{eq.sweep.eul},
 	can be rephrased as 
 	$$
 	h \sum_{j\ge 0 } U^{n}_j \tr_{j}+ \lambda^n G^n = 0
 	$$
 	and provides using the equality:
 	$$
 	-\lambda^{n+1} G^{n+1} +  d^{n+1} =  -\lambda^n G^n + (X^{n+1}-X^n)
 	$$	
 	where we defined $G^n:= \nabla \varphi_n (X^n) = 2(X^n - c^n)$.
	Testing the previous equality by $X^{n+1}-X^n$ gives:
 	$$
 	|X^{n+1}-X^n|^2 + \left( \lambda^{n+1}  G^{n+1} - \lambda^n G^n, X^{n+1}-X^n \right) 
 	=  h \left( d^{n+1}, X^{n+1}-X^n \right).
 	$$
	The second term on the left-hand side is the projection-step product
	$$\langle \delta P^{n+\nud}, \delta X^{n+\nud} \rangle,$$
	for which we need a lower bound (see Remark~\ref{rem:projection_step} and Figure~\ref{fig.circle.fig8}).
 	We decompose it as:
 	$$
 	\begin{aligned}
 		2 & \left(\lambda^{n+1}  G^{n+1} - \lambda^n G^n, X^{n+1}-X^n \right) \\
		& = \left(\lambda^{n+1}  G^{n+1} - \lambda^n G^n, G^{n+1}-G^n \right) \\
		& \quad + 2 \left(\lambda^{n+1}  G^{n+1} - \lambda^n G^n, c^{n+1}-c^n \right) \\
 		& = \frac{1}{2} \left( \lambda^{n+1}+ \lambda^{n}\right) | G^{n+1}-G^n |^2
 		+ \frac{1}{2} (\lambda^{n+1}- \lambda^{n}) ((G^{n+1})^2 - (G^n)^2) \\
 		& \quad +
 		2 \left(\lambda^{n+1}  G^{n+1} - \lambda^n G^n, c^{n+1}-c^n \right)	 \\
 		& = \frac{1}{2} \left( \lambda^{n+1}+ \lambda^{n}\right) | G^{n+1}-G^n |^2 
 		+ 2 (\lambda^{n+1}- \lambda^{n}) (\varphi_{n+1}(X^{n+1}) - \varphi_n(X^n)) \\
 		& \quad +
 		2\left(\lambda^{n+1}  G^{n+1} - \lambda^n G^n, c^{n+1}-c^n \right)
 	\end{aligned}		
 	$$
 	Notice first that since $(\lambda^n)_{n\in \N}$ is non-negative, the first term is non-negative and we can neglect it in the following estimates. 
 	We focus now on the second term. 
 	\begin{itemize}[itemsep=0.05cm,parsep=0.1cm]
 		\item Assume that the constraint is active at step $n$ and $n+1$, i.e. $\lambda^n >0$ and $\lambda^{n+1} >0$. 
 		Then $\varphi_n(X^n) = 0$ and $\varphi_{n+1}(X^{n+1})= 0$ and the second term vanishes.
 		\item Assume that the constraint is inactive at step $n$ and $n+1$, i.e. $\lambda^n =0$ and $\lambda^{n+1} =0$. 
 		Then the second term vanishes as well.
 		\item Assume that the constraint is active at step $n$ and inactive at step $n+1$, i.e. 
 		$\lambda^n >0$, $\varphi_n(X^n) = 0$	 and $\lambda^{n+1} =0$, $\varphi_{n+1}(X^{n+1}) < 0$,
 		the second term reduces to $-\lambda^n \varphi_{n+1}(X^{n+1}) \geq 0$.
 		\item Assume that the constraint is inactive at step $n$ and active at step $n+1$, i.e. 
 		$\lambda^n =0$, $\varphi_n(X^n) < 0$ and $\lambda^{n+1} >0$, $\varphi_{n+1}(X^{n+1}) = 0$.
 		the second term reduces to $\lambda^{n+1} (-\varphi_n(X^n)) \geq 0$.
 	\end{itemize}

	In all cases, the second term is non-negative. To sum up, one has
 	$$
 	\begin{aligned}
 		(X^{n+1}-X^n)^2 & \leq - \langle \lambda^{n+1} G^{n+1} - \lambda^n G^n, c^{n+1}-c^n \rangle 
 		+  h \langle d^{n+1}, X^{n+1}-X^n \rangle
 	\end{aligned}	
 	$$
 	Using then that $\lambda^{n+1} G^{n+1} - \lambda^n G^n = - (X^{n+1}-X^n) + h \sum_{j\geq 0} (X^{n+1-j}-X^{n-j}) \tr_j$
 	from \eqref{eq.sweep.eul}, 
 	denoting $\delta X^{n+\nud} = X^{n+1}-X^n$, we have	using Young's inequality:
 	$$
 	\begin{aligned}			
 		|\delta X^{n+\nud}|^2 & \leq \left( \delta X^{n+\nud}, c^{n+1}-c^n \right) \\
		& \quad - \left( h \sum_{j\geq 0} \delta {X}^{n-j+\nud} \tr_j, c^{n+1}-c^n \right)
		+ h \left( d^{n+1}, \delta X^{n+\nud} \right) \\
 		& \leq |\delta X^{n+\nud}| |c^{n+1}-c^n|
		+ h \left|\sum_{j\geq 0} \delta {X}^{n-j+\nud} \tr_j\right| |c^{n+1}-c^n| \\
 		& \quad +  h |d^{n+1}| |\delta X^{n+\nud}| \\
 		& \leq \alpha_1 |\delta X^{n+\nud}|^2 + \alpha_1^{-1} |\delta c^{n+\nud}|^2 \\
		& \quad + \alpha_2 \left|h \sum_{j\geq 0} \delta {X}^{n-j+\nud} \tr_j\right|^2
		+ \alpha_2^{-1} |c^{n+1}-c^n|^2 \\
		& \quad + h^2  \alpha_3^{-1} |d^{n+1}|^2 +  \alpha_3 |\delta X^{n+\nud}|^2
 	\end{aligned}
 	$$
 	Leading to the final estimate:
 	$$
 	\begin{aligned}			
 		|\delta X^{n+\nud}|^2 &\leq 
 		(\alpha_1^{-1} + \alpha_2^{-1}) /(1 - \alpha_1 - \alpha_3)  |c^{n+1}-c^n|^2 \\
 		& + \alpha_2/(1 - \alpha_1 - \alpha_3)  \left|h \sum_{j\geq 0} 
		\delta {X}^{n-j+\nud} \tr_j\right|^2 + h^2	 \alpha_3^{-1} C_R'	/(1 - \alpha_1 - \alpha_3	) 		
 	\end{aligned}
 	$$
 	One chooses $\alpha_2=1-\alpha_1 - \alpha_3$ and dividing the previous expression by $h$ gives:
 	$$
 	\begin{aligned}			
 		y^n:= {} & \frac{|\delta X^{n+\nud}|^2}{h} \\
		& \leq
 		\frac{\alpha_1^{-1} + (1-\alpha_1 - \alpha_3)^{-1}}{1 - \alpha_1 - \alpha_3}  \frac{|c^{n+1}-c^n|^2}{h}
		+ \frac{1}{h} \left|h \sum_{j\geq 0} \delta {X}^{n-j+\nud} \tr_j\right|^2 \\
		& \quad + \frac{\alpha_3^{-1} h}{1 - \alpha_1 - \alpha_3} C_R'	\\
 		& \leq c \omega_n  + h \sum_{j\geq 0} y^{n-j} \tr_j 	
 	\end{aligned}
 	$$
 	where $\omega_n:= (c^{n+1}-c^n)^2/h + h $ and we used Jensen's inequality to 
 	estimate $\left|h \sum_{j\geq 0} \delta {X}^{n-j+\nud} \tr_j\right|^2$.
 	Define now $z^n:= \sum_{k=0}^{n} y^k$, then summing the previous inequality gives:
 	$$
 	z^n \leq c \sum_{k=0}^n \omega_k + h \sum_{j=0}^{n} \tr_j z^{n-j}
 	$$
 	We detail here how to obtain the last sum: for $n \geq 1$ 
 	$$
 	\begin{aligned}
 		\sum_{p= 0}^{n}\sum_{j\geq 0} y^{p-j} \tr_j
		& = \sum_{p=0}^{n} \sum_{j=0}^p y^{p-j} \tr_j = \sum_{0\leq j \leq p \leq n} y^{p-j} \tr_j \\
		& = \sum_{j= 0}^{n} \sum_{k=0}^{n-j} y^{k} \tr_j
 		= \sum_{j=0}^{n} z^{n-j} \tr_j
 	\end{aligned}
 	$$
 	since we assumed that the data is well prepared, for all $j \geq 1$ $X^{-j} = X^{-1}\in C^0$ implying that $y^k = 0$ for $k < 0$.

 	Using then Theorem \ref{thm.comp.disc.main}  leads to the desired estimate since 
 	$\sum_{k=0}^n \omega_k \leq C$. Indeed, 
	since $\dt c \in L^2 (0,T)$, its equivalent 
 	on the discrete level provides $\sum_{n=0}^{N_T} (\delta c^{n+ \nud})^2/h < \infty$. This ends the proof.	
 \end{proof}
 
\begin{remark}\label{rem:projection_step}
	The key step of the proof relies on bounding the projection-step product
	$\langle \delta P^{n+\nud}, \delta X^{n+\nud} \rangle$.
	Without time dependency, this term would be non-negative by monotonicity of the normal cone.
	Here, however, $X^{n+1}$ and $X^n$ do not necessarily belong to the same set, and the decomposition above
	yields a positive part and a component depending on the center trajectories.
	This heavily depends on the geometry of the moving set, and extending
	these results to general convex sets --- for instance as a function of the excess $e(C^{n},C^{n+1})$ --- remains an open problem.
	Numerical simulations (Figure~\ref{fig.circle.fig8}) suggest that $-\langle \delta P^{n+\nud}, \delta X^{n+\nud} \rangle / h^2$ remains bounded as $h \to 0$, which is consistent with the convergence of the scheme.
\end{remark}

 \subsection{Convergence towards the delayed sweeping process}

 \begin{proof}[Proof of Theorem \ref{thm:limit_sweeping.main}]
 	\emph{Step~1 -- Compactness of $\{X_h\}_h$.}
 	From the uniform $H^1$ bound, we obtain (Kondrachov’s selection theorem) that there exists
 	$X\in H^1([0,T];\mathbb R^m)$ and a subsequence such that
 	$X_h\to X$ in $C^0(0,T)$.
	Moreover, the piecewise-constant interpolant $\tilde{X}_h$ also converges to $X$ in $L^p(0,T)$.
 	
 	\emph{Step~2 -- Convergence of the discrete averages.}
	By Proposition \ref{prop:discrete_convolution_convergence}, 
	the discrete convolution defining $\overline{X}_h$ converges to the continuous convolution $\overline{X}$
	in $L^p$ for any $1\le p<\infty$. The continuous limit convolution decomposes accordingly as
 	\[
 	\overline{X}(t)
 	:=\int_0^t X(t-s)\varrho(s)\,ds
 	+ X_p \int_t^\infty \varrho(s)\,ds.
 	\]
 	
 	\emph{Step~3 -- Convergence in the discrete inclusion.}
 	At the discrete level, for each $n\ge0$, we have the projection characterization $
 	X^n = P_{C^n}(\overline{X}^n) $ is equivalent to the normal cone inclusion
 	$\overline{X}^n - X^n \in N_{C^n}(X^n)$ or, in duality form,
 	\[
	\forall \xi \in \R^m, \langle \overline{X}^n - X^n , \xi \rangle \leq \left| \overline{X}^n - X^n \right| d_{C^n}(\xi + \overline{X}^n - X^n).
 	\]
 	Define the piecewise-constant sets $C_h(t):=C^n$ for $t\in(nh,(n+1)h]$.
 	By the $H^1(0,T)$ assumption on the centers of balls  $c(t)$,
 	we may assume $C_h(t)\to C(t)$ in the Hausdorff sense for a.e.\ $t$.
 	Moreover by Corollary~\ref{coro.lag}, we have that $X^n-\overline{X}^n$ is uniformly bounded in $\R^m$.

	Up to a subsequence, we may fix $t\in(0,T)$ such that
 	$\overline{X}_h(t)\to \overline{X}(t)$, $\tilde{X}_h(t)\to X(t)$,
 	and $C_h(t)\to C(t)$.
 	Then for such $t$,
 	\[
 	\langle \overline{X}_h(t) - \tilde{X}_h(t) , \xi \rangle 
 	\leq \left| \overline{X}_h(t) - \tilde{X}_h(t) \right| d_{C_h(t)}(\xi + \overline{X}_h(t) - \tilde{X}_h(t)).
 	\]
	Because one has that 
	$$
	\begin{aligned}
		\left| d_{C_h(t)} (\xi + \right. & \overline{X}_h(t)- \tilde{X}_h(t))  \left.- d_{C(t)}(\xi + \overline{X}(t) - X(t))\right| \\
	& \leq \left| (\overline{X}_h(t) - \tilde{X}_h(t))- (\overline{X}(t) - X(t)) \right| + d_H(C_h(t),C(t)),
	\end{aligned}
	$$
	and that $d_{C_h(t)} (\xi + \overline{X}_h(t) - \tilde{X}_h(t))$ is uniformly bounded for every fixed $\xi$ 
	thanks to the boundedness of $\overline{X}_h(t) - \tilde{X}_h(t)$ (see Corollary~\ref{coro.lag}),
 	we obtain, by successive triangle inequalities and passing to the limit $h\to0$,
 	\[
 	\langle \overline{X}(t) - X(t) , \xi \rangle 
 	\leq \left| \overline{X}(t) - X(t) \right| d_{C(t)}(\xi + \overline{X}(t) - X(t)).
 	\]
 	Since this holds for all $\xi\in\R^m$, which is equivalent to the normal cone inclusion
 	$
 	\overline{X}(t)-X(t)\in N_{C(t)}\big(X(t)\big)
 	$.	
 \end{proof}

\section{Numerical implementation and examples}\label{sec:numerics}

We now describe the numerical algorithm used to compute the discrete sweeping sequence.
We consider kernels of the form
$\kernel(a) = \varepsilon e^{-\varepsilon a}$ with $\varepsilon > 0$
and feasible sets
$C(t) = \{x \in \RR^d : g(x - c(t)) \geq 0\}$,
where $c(t)$ is a given trajectory and $g$ is a smooth concave function with $g > 0$.
An interactive simulation platform is available at \url{https://steffenpl.github.io/delayed-sweeping} (source code: \url{https://github.com/SteffenPL/delayed-sweeping}).

\subsection{Time-stepping scheme}

For $n = 0, 1, \ldots, N$ with $N = \lfloor T/h \rfloor$:
\begin{compactenum}[1)]
	\item Compute the discrete kernel weights
	$R_j = \frac{1}{h}\int_{jh}^{(j+1)h} \kernel(a)\, da$,
	truncated at $j = J_{\max}$ where $R_{J_{\max}} < \texttt{tol}$.
	For the exponential kernel, $R_j = \frac{1}{h}(1 - e^{-\varepsilon h}) e^{-\varepsilon j h}$.
	\item Compute the weighted average of past positions
	$$\overline{X}^n = \frac{h \sum_{j=1}^{J_{\max}} R_j\, X^{n-j}}{h \sum_{j=1}^{J_{\max}} R_j},$$
	where $X^{n-j} = X_p((n-j)h)$ whenever $n-j < 0$.
	\item Compute the projection $X^n = P_{C^n}(\overline{X}^n)$ as described below.
\end{compactenum}

\subsection{Projection onto the constraint set}

When $g_n(\overline{X}^n) \geq 0$, the point is feasible and $X^n = \overline{X}^n$.
Otherwise we must compute $P_{C^n}(\overline{X}^n)$, the closest point on $\{g_n = 0\}$.
All gradients are approximated by central finite differences with step $\delta = 10^{-6}$.

\paragraph{Step 1: Newton to boundary.}
Starting from $p_0 = \overline{X}^n$, we iterate
\begin{equation}\label{eq.newton}
	p_{k+1} = p_k - \frac{g_n(p_k)}{|\nabla g_n(p_k)|^2}\, \nabla g_n(p_k)
\end{equation}
until $|g_n(p_k)| < \texttt{tol}$. This produces a point $p^{(0)}$ on the boundary,
but not necessarily the closest one to $\overline{X}^n$.

\paragraph{Step 2: Optimality refinement.}
At the metric projection $x^*$, the displacement $x^* - \overline{X}^n$ must be
parallel to $\nabla g_n(x^*)$. Starting from $p^{(0)}$, for $\ell = 0, 1, \ldots$
we compute the tangential component of the displacement
$d^{(\ell)} = p^{(\ell)} - \overline{X}^n$:
\begin{equation}\label{eq.tangential}
	d_\tau^{(\ell)} = d^{(\ell)} - \frac{d^{(\ell)} \cdot \nabla g_n(p^{(\ell)})}{|\nabla g_n(p^{(\ell)})|^2}\, \nabla g_n(p^{(\ell)}).
\end{equation}
If $|d_\tau^{(\ell)}| < \texttt{tol}$, we accept $X^n = p^{(\ell)}$.
Otherwise, we remove the tangential component and re-project onto the boundary:
\begin{equation}\label{eq.refinement}
	\hat{p}^{(\ell)} = p^{(\ell)} - \alpha\, d_\tau^{(\ell)}, \qquad
	p^{(\ell+1)} = \text{Newton-to-boundary}\bigl(\hat{p}^{(\ell)}\bigr),
\end{equation}
where $\alpha \in (0,1]$ is chosen by backtracking to ensure
$|p^{(\ell+1)} - \overline{X}^n| < |p^{(\ell)} - \overline{X}^n|$.
In practice, $\texttt{tol} = 10^{-8}$ and at most $50$ outer iterations suffice.

\subsection{Numerical examples}\label{sec:numerical_examples}

All numerical experiments use the discrete time-stepping scheme~\eqref{eq.sweeping} with the exponential kernel $\varrho(a) = \varepsilon e^{-\varepsilon a}$.

\paragraph{Circular constraint (Figures~\ref{fig.simulations}--\ref{fig.circle.fig8}).}
The constraint is a disk $C^n = \{X \in \mathbb{R}^2 : |X - c^n| \leq R\}$ whose center follows a Lissajous curve:
\[
c(t) = \bigl(2\sin(t),\; 2\sin(2t)\bigr), \quad t \in [0, 9].
\]
Parameters: $R = 0.5$, $\varepsilon = 0.75$, $h = 0.005$, with zero past condition $X_p(t) = (0, 0)$ for $t < 0$.
This past is compatible with the initial constraint since $X_p(0) = (0,0) \in C^0$.

\paragraph{Stadium constraint (Figures~\ref{fig.simulations}--\ref{fig.circle.fig8}).}
The constraint is a rotating stadium (capsule) defined by
$C(t) = \{x \in \mathbb{R}^2 : r - \sqrt{\max(|x_1| - R/2,\, 0)^2 + x_2^2} \geq 0\}$,
translated along the same Lissajous curve and rotated with angle $\alpha(t) = 4t$.
Parameters: $R = 0.72$, $r = 0.3$, $\varepsilon = 0.75$, $h = 0.005$, with zero past condition.

\subsection{Convergence and projection-step product}\label{sec:convergence_tests}

For both examples, we study convergence as $h \to 0$ with step sizes $h \in \{2^{-2}, 2^{-3}, \ldots, 2^{-8}\}$ and reference solution $h_{\mathrm{ref}} = 2^{-10}$.
The $L^2$ error is computed as
$\|z^h-z^{\mathrm{ref}}\|_{L^2}^2 \approx h\sum_{k=1}^{N} \|z^h(hk)-z^{\mathrm{ref}}(hk)\|^2$.
With compatible past ($X_p \equiv 0 \in C^0$), we observe second-order convergence for both constraint types (Figure~\ref{fig:conv_l2_loglog}).
With incompatible past ($X_p \equiv (2,0) \notin C^0$), the convergence degenerates to first order.

As discussed in Remark~\ref{rem:projection_step}, the projection-step product $\langle \delta P^{n+\nud}, \delta X^{n+\nud} \rangle$ plays a central role in the energy estimates.
Figure~\ref{fig.circle.fig8} shows that the rescaled quantity $-\langle \delta P^{n+\nud}, \delta X^{n+\nud} \rangle / h^2$ remains bounded as $h \to 0$, for both the circular and the stadium constraint.
This is consistent with the convergence of the scheme and supports the bound used in the proof of Theorem~\ref{thm:circular.main}.

\begin{figure}[htbp]
\centering
\begin{subfigure}[t]{0.48\textwidth}
	\centering
	\begin{tikzpicture}
	\begin{loglogaxis}[
		width=0.93\textwidth,
		xlabel={$h$},
		ylabel={$\|z^h-z^{\mathrm{ref}}\|_{L^2}$},
		grid=major,
		legend style={
			font=\small,
			at={(0.5,1.05)},
			anchor=south,
			legend columns=2,
		},
		mark size=2pt,
	]
	\addplot[blue, thick, mark=*] table[
		col sep=tab, x=h, y=Error
	]{fig_convergence-L2-circle.tsv};
	\addlegendentry{Compatible}
	\addplot[orange, thick, mark=square*] table[
		col sep=tab, x=h, y=Error
	]{fig_convergence-L2-circle-incompat.tsv};
	\addlegendentry{Incompatible}
	\addplot[red, thick, dashed] table[
		col sep=tab,
		x expr=\thisrow{h},
		y expr=0.2*(\thisrow{h})^2
	]{fig_convergence-L2-circle.tsv};
	\addlegendentry{$h^2$}
	\addplot[black, thick, dashdotted] table[
		col sep=tab,
		x expr=\thisrow{h},
		y expr=0.2*(\thisrow{h})
	]{fig_convergence-L2-circle.tsv};
	\addlegendentry{$h$}
	\end{loglogaxis}
	\end{tikzpicture}
	\caption{Circular constraint}
	\label{fig:conv_l2_circle}
\end{subfigure}
\hfill
\begin{subfigure}[t]{0.48\textwidth}
	\centering
	\begin{tikzpicture}
	\begin{loglogaxis}[
		width=0.93\textwidth,
		xlabel={$h$},
		ylabel={$\|z^h-z^{\mathrm{ref}}\|_{L^2}$},
		grid=major,
		legend style={
			font=\small,
			at={(0.5,1.05)},
			anchor=south,
			legend columns=2,
		},
		mark size=2pt,
	]
	\addplot[blue, thick, mark=*] table[
		col sep=tab, x=h, y=Error
	]{fig_convergence-L2-stadium.tsv};
	\addlegendentry{Compatible}
	\addplot[orange, thick, mark=square*] table[
		col sep=tab, x=h, y=Error
	]{fig_convergence-L2-stadium-incompat.tsv};
	\addlegendentry{Incompatible}
	\addplot[red, thick, dashed] table[
		col sep=tab,
		x expr=\thisrow{h},
		y expr=0.5*(\thisrow{h})^2
	]{fig_convergence-L2-stadium.tsv};
	\addlegendentry{$h^2$}
	\addplot[black, thick, dashdotted] table[
		col sep=tab,
		x expr=\thisrow{h},
		y expr=0.2*(\thisrow{h})
	]{fig_convergence-L2-stadium.tsv};
	\addlegendentry{$h$}
	\end{loglogaxis}
	\end{tikzpicture}
	\caption{Stadium constraint}
	\label{fig:conv_l2_stadium}
\end{subfigure}
\caption{$L^2$ convergence of the discrete solution. Compatible past ($X_p \equiv 0 \in C^0$, blue) shows second-order convergence; incompatible past ($X_p \equiv (2,0) \notin C^0$, orange) degenerates to first order. Reference slopes $h$ and $h^2$ are shown.}
\label{fig:conv_l2_loglog}
\end{figure}
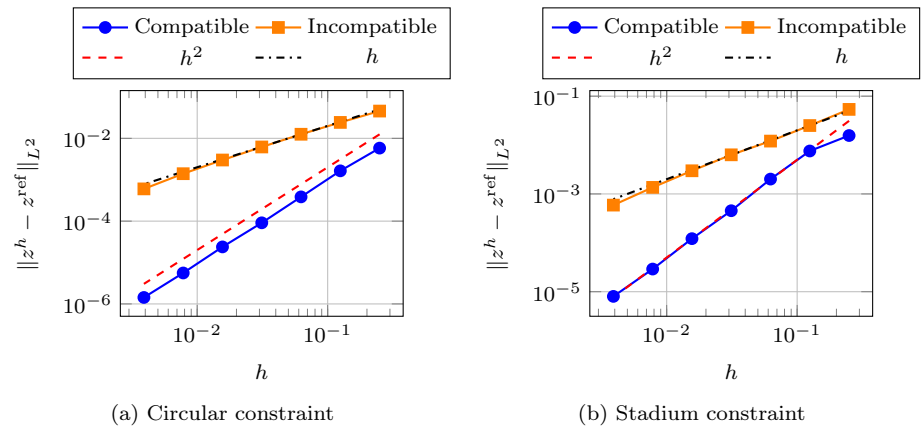

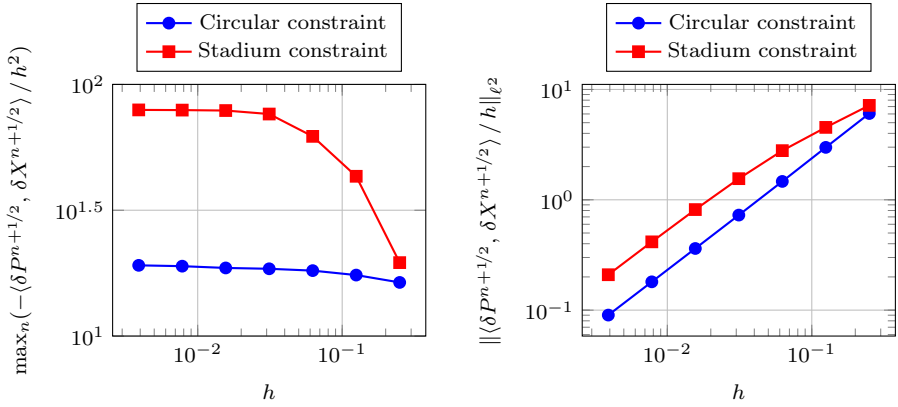
\begin{figure}
	\centering
	\begin{subfigure}[t]{0.48\textwidth}
		\centering
		\begin{tikzpicture}
			\begin{loglogaxis}[
				width=\textwidth,
				xlabel={$h$},
				ylabel={$\max_n (-\langle \delta P^{n+\nud},\, \delta X^{n+\nud} \rangle \,/\, h^2)$},
				ylabel style={font=\small},
				ymin=10, ymax=100,
				legend style={
					font=\small,
					at={(0.5,1.05)},
					anchor=south,
					legend columns=1,
				},
				grid=major,
				mark size=2pt,
			]
			\addplot[mark=*, thick, blue]
				table[col sep=tab, x=h, y={max KKT}]
				{fig_projection-step-product-max-circle.tsv};
			\addlegendentry{Circular constraint}
			\addplot[mark=square*, thick, red]
				table[col sep=tab, x=h, y={max KKT}]
				{fig_projection-step-product-max-stadium.tsv};
			\addlegendentry{Stadium constraint}
			\end{loglogaxis}
		\end{tikzpicture}
		\caption{$\max_n (-\langle \delta P^{n+\nud}, \delta X^{n+\nud} \rangle / h^2)$ vs.\ $h$}
		\label{fig.dPdX.max}
	\end{subfigure}
	\hfill
	\begin{subfigure}[t]{0.48\textwidth}
		\centering
		\begin{tikzpicture}
			\begin{loglogaxis}[
				width=\textwidth,
				xlabel={$h$},
				ylabel={$\| \langle \delta P^{n+\nud},\, \delta X^{n+\nud} \rangle \,/\, h \|_{\ell^2}$},
				ylabel style={font=\small},
				legend style={
					font=\small,
					at={(0.5,1.05)},
					anchor=south,
					legend columns=1,
				},
				grid=major,
				mark size=2pt,
			]
			\addplot[mark=*, thick, blue]
				table[col sep=tab, x=h, y={max KKT}]
				{fig_projection-step-product-L2-circle.tsv};
			\addlegendentry{Circular constraint}
			\addplot[mark=square*, thick, red]
				table[col sep=tab, x=h, y={max KKT}]
				{fig_projection-step-product-L2-stadium.tsv};
			\addlegendentry{Stadium constraint}
			\end{loglogaxis}
		\end{tikzpicture}
		\caption{$\|\langle \delta P^{n+\nud}, \delta X^{n+\nud} \rangle / h\|_{\ell^2}$ vs.\ $h$}
		\label{fig.dPdX.L2}
	\end{subfigure}
	\caption{Projection-step product $\langle \delta P^{n+\nud}, \delta X^{n+\nud} \rangle$ for varying step size~$h$, comparing circular and stadium-shaped constraints. See Section~\ref{sec:numerical_examples} for full specifications.}
	\label{fig.circle.fig8}
\end{figure}

\section{Conclusion and perspectives}

This work provides a novel type of comparison principle and applies this principle to the analysis 
of a discrete sweeping process with memory. Our comparison principle is flexible enough to yield 
compactness and convergence results for the discrete sweeping process. But the non-infinitesimal 
nature of the projections imposes strong geometric constraints. Further work is needed to extend 
our results to general convex sets and for applications in mathematical biology eventually also 
for prox-regular constraints.

Our focus in this work was to be able to start from a delayed constrained gradient flow \cite{Mi.5}
and show convergence of the discrete sweeping process to its continuous inclusion.
An alternative route to more general results is to consider Yoshida approximations
and fixed point techniques directly in the continuous setting. This is an ongoing work. 
The original model introducing the specific delayed gradient flow was motivated by 
the modeling of cell migration in structured environments \cite{OelzSch10},  
and contained a parameter $\varepsilon$ representing the inverse of the bond's stiffness
and the characteristic lifetime of the bonds. For the sake of clarity we did not include this
parameter in the present work. The limit $\varepsilon \to 0$ is another perspective of interest since in this case the delayed sweeping process converges to the classical Moreau process.
These aspects will be handled in a forthcoming publication.

On the numerical side, natural extensions include the multicellular case, 
a rigorous numerical analysis of the observed convergence rates and 
the role of incompatible initial past conditions, and the use of continuation methods to 
accelerate the projection step.

\section{Acknowledgements}

S.P. was supported by the Japanese Society for the Promotion of Science (JSPS) KAKENHI Grant Number JP22K13971. This manuscript was edited with help of AI assistance, and the computational code was created using Claude Code. The mathematical research and proofs were developed by the authors without AI assistance.

 \appendix

	 \section{Discrete approximation of kernels' moments}\label{sec:moments}
	
	In order to compare the continuous quantities 
	\(\mu_k:= \int_0^{\infty} a^k \varrho(a)\,da\) 
	with their discrete counterparts that appear in the estimates above,
	we introduce for every \(h>0\) the discrete moments
	\[
	\mu_{k,h}:= h \sum_{j\ge0} (jh)^k R_j,
	\qquad 
	R_j:= \frac{1}{h}\int_{jh}^{(j+1)h}\varrho(a)\,da,
	\qquad k\in\{0,1,2\}.
	\]
	Note that by construction the discrete mass is exact:
	\[
	\mu_{0,h}=h\sum_{j\ge0}R_j=\int_0^\infty\varrho(a)\,da=\mu_0.
	\]
	The following lemma shows that these quantities are first–order accurate
	approximations of the continuous moments under our integrability assumptions.
	
	\begin{lemma}[Approximation of the moments]
		\label{lem:moments}
		Assume that the kernel satisfies \(\varrho\ge0\) and 
		\(\int_0^{\infty}(1+a)^{k-1}\varrho(a)\,da<\infty\) 
		for \(k\in\{1,2\}\). Then there exist constants \(C_1,C_2>0\),
		independent of \(h\in(0,1]\), such that
		\begin{equation}\label{eq:moment_error}
			|\mu_k-\mu_{k,h}|
			\;\le\;
			C_k\,h
			\int_0^{\infty}(1+a)^{k-1}\varrho(a)\,da,
			\qquad
			C_1=1,\; C_2=3.
		\end{equation}
	\end{lemma}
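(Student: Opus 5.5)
We compare each discrete moment with its continuous counterpart cell by cell. Since $hR_j=\int_{jh}^{(j+1)h}\varrho(a)\,da$, we can write
\[
\mu_k-\mu_{k,h}=\sum_{j\ge0}\int_{jh}^{(j+1)h}\bigl(a^k-(jh)^k\bigr)\varrho(a)\,da .
\]
The integrand is non-negative on each cell, because $a\ge jh$ there. So no cancellation needs to be tracked, and it suffices to bound $a^k-(jh)^k$ pointwise for $a\in[jh,(j+1)h)$. The absolute value in \eqref{eq:moment_error} is then automatic, since $\mu_k\ge\mu_{k,h}$.

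\emph{Case $k=1$.} On each cell, $0\le a-jh\le h$. Summing over $j$ gives
\[
0\le \mu_1-\mu_{1,h}\le h\int_0^\infty\varrho(a)\,da ,
\]
which is the claim with $C_1=1$, since $(1+a)^{0}=1$.

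\emph{Case $k=2$.} Factor $a^2-(jh)^2=(a-jh)(a+jh)$. On the cell, $a-jh\le h$ and $a+jh\le 2a$. Hence
\[
0\le a^2-(jh)^2\le 2ha\le 2h(1+a),
\]
and integrating against $\varrho$ gives the bound with constant $2\le 3=C_2$. If the proof instead uses the cruder estimate $a+jh\le 2a+h$, then the restriction $h\le1$ turns $2a+h$ into at most $3(1+a)$, which explains the stated constant $C_2=3$.

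Finiteness of $\mu_k$ is needed to split the series and exchange sum and integral. This follows from the integrability assumption: $\mu_1\le\int(1+a)\varrho$ (the $k=2$ hypothesis), and $\mu_2$ is finite by Hypothesis~\ref{hypo.data} ii). Since everything is non-negative, Tonelli justifies all exchanges. There is no real obstacle here; the only point requiring care is matching the weight $(1+a)^{k-1}$ and the constant $C_2$, which the pointwise bound above settles.
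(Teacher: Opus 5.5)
Your proof is correct and follows essentially the same route as the paper: a cell-by-cell pointwise bound on $a^k-(jh)^k$ for $a\in[jh,(j+1)h]$. The paper gets that bound from the mean value theorem as $k\,((j+1)h)^{k-1}h$, while you factor directly; your version is slightly sharper, giving $C_2=2$, whereas the paper's bookkeeping $C_k=k2^{k-1}$ does not literally produce the stated $3$, and your observation that $\mu_2<\infty$ must come from Hypothesis~\ref{hypo.data} ii) rather than from the lemma's own assumption is a point the paper leaves implicit.
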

	
	\begin{proof}
		For every \(k\ge1\),
		\[
		\mu_k
		=\sum_{j\ge0}\int_{jh}^{(j+1)h}a^k\varrho(a)\,da.
		\]
		For \(a\in[jh,(j+1)h]\), by the mean value theorem,
		\(
		|a^k-(jh)^k|
		\le k (jh+h)^{k-1} h.
		\)
		Hence
		\[
		|\mu_k-\mu_{k,h}|
		\le
		k h \sum_{j\ge0}(jh+h)^{k-1}
		\int_{jh}^{(j+1)h}\varrho(a)\,da
		\le
		C_k h\int_0^{\infty}(1+a)^{k-1}\varrho(a)\,da,
		\]
		where \(C_k=k2^{k-1}\), giving the claimed constants
		\(C_1=1\) and \(C_2=3\).
	\end{proof}

	\begin{remark}
		Estimates \eqref{eq:moment_error}
		justify the substitution of discrete moments
		\(\mu_{i,h}\) for their continuous counterparts
		\(\mu_i\) in all bounds of discrete {\em a priori} estimates above. 
		In particular, the constants appearing in those inequalities
		remain uniform in \(h\) as \(h\to0\).
	\end{remark}

\section{$L^p$ Convergence of Discrete Convolutions}\label{sec:discrete_convolution}

Let $T > 0$ and $h = T/N$ for $N \in \mathbb{N}$. Define the intervals $J_n = [nh, (n+1)h)$ for $n=0, \dots, N-1$. 
Consider the piecewise constant functions:
\[ B_h(t) = \sum_{n=0}^{N-1} B_n \chi_{J_n}(t), \quad C_h(t) = \sum_{n=0}^{N-1} C_n \chi_{J_n}(t) \]

\begin{proposition}\label{prop:discrete_convolution_convergence}
Assume $B_h \to B$ strongly in $L^p(0,T)$ and $C_h \to C$ strongly in $L^1(0,T)$. 
Define the discrete causal convolution $A_h$ as:
\[ A_h(t) = \sum_{n=0}^{N-1} \left( h \sum_{j=0}^n B_{n-j} C_j \right) \chi_{J_n}(t) \]
Then $A_h \to B * C$ strongly in $L^p(0,T)$, where $(B * C)(t) = \int_0^t B(t-s)C(s)ds$.
\end{proposition}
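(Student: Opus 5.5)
The plan is to compare $A_h$ with the piecewise constant discretization of the continuous convolution, and to split the error into a part coming from the data and a part coming from the quadrature. First we rewrite $A_h$ as an exact convolution of piecewise constant functions. For $t\in J_n$, the step functions $B_h$ and $C_h$ give
\[
(B_h * C_h)(t)=\int_0^t B_h(t-s)C_h(s)\,ds ,
\]
which is a combination of $B_{n-j}C_j$ and $B_{n-j-1}C_j$ with weights linear in $t-nh$. Its average over $J_n$ equals $\tfrac{h}{2}\sum_{j}(B_{n-j}+B_{n-j-1})C_j$, with the convention $B_{-1}=0$. Hence we work with $\Pi_h(B_h*C_h)$, where $\Pi_h$ is the $L^2$-projection onto step functions (cell averaging). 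This gives the decomposition
\[
A_h - B*C = \bigl(A_h-\Pi_h(B_h*C_h)\bigr)+\Pi_h\bigl(B_h*C_h - B*C\bigr)+\bigl(\Pi_h(B*C)-B*C\bigr).
\]

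\textbf{Data and averaging terms.} Young's convolution inequality on $(0,T)$, extending the functions by zero, gives
\[
\|B_h*C_h-B*C\|_{L^p}\le \|B_h-B\|_{L^p}\|C_h\|_{L^1}+\|B\|_{L^p}\|C_h-C\|_{L^1}\to 0,
\]
using that $\|C_h\|_{L^1}$ is bounded by strong convergence. Since $\Pi_h$ is a contraction on $L^p$ (Jensen on each cell), the middle term tends to zero. The last term tends to zero because $\Pi_h g\to g$ in $L^p$ for every $g\in L^p$, $p<\infty$. This follows by density of continuous functions together with the uniform contraction bound.

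\textbf{Quadrature term (main obstacle).} What remains is the difference between the rectangle rule and the averaged exact convolution:
\[
A_h-\Pi_h(B_h*C_h)=\tfrac{h}{2}\sum_{j}(B_{n-j}-B_{n-j-1})C_j \quad\text{on } J_n .
\]
This is $\tfrac12\bigl((B_h-\tau_hB_h)*C_h\bigr)$ evaluated on the grid, where $\tau_h$ is the shift by one cell. Its $L^p$ norm is bounded, via discrete Young, by
\[
\tfrac12\|B_h-\tau_h B_h\|_{L^p}\,\|C_h\|_{L^1}.
\]
To show $\|B_h-\tau_hB_h\|_{L^p}\to0$, we compare with $B$: we have $\|B_h-B\|_{L^p}\to0$, the shift is isometric up to boundary cells, and $\|\tau_hB-B\|_{L^p}\to0$ by continuity of translations in $L^p$. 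Care is needed at the boundary cell $n=0$, where $B_{-1}=0$, which contributes $\tfrac{h}{2}B_nC_0$. Its $L^p$ norm is at most $\tfrac12\|B_h\|_{L^p}\,h|C_0|$, and $h|C_0|=\|C_h\|_{L^1(J_0)}\to0$ by uniform integrability of the convergent family $C_h$.

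The whole argument hinges on the quadrature term: the discrete Young inequality must be applied with the right indexing, and the equi-continuity of translations has to be transferred from $B$ to the family $B_h$. If the index bookkeeping becomes delicate, a fallback is a density argument. One first proves the result for continuous $B$ and $C$, where the Riemann-sum convergence is uniform, and then uses the uniform bound $\|A_h\|_{L^p}\le\|B_h\|_{L^p}\|C_h\|_{L^1}$ (discrete Young), which is linear in each argument, to pass to general data by an $\varepsilon/3$ argument.
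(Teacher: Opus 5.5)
Your proof is correct, and it uses the paper's skeleton: express $A_h$ through the exact convolution $B_h*C_h$, then bound $B_h*C_h-B*C$ by Young's inequality. Where you depart is the one step that carries the real difficulty.

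\textbf{The step the paper leaves unjustified.} The paper disposes of $\|A_h-B_h*C_h\|_{L^p}$ by saying that $A_h$ is a piecewise constant approximation of the continuous, piecewise linear function $B_h*C_h$. That is not automatic. The function changes with $h$, and its slope on $J_n$ is $\sum_j(B_{n-j}-B_{n-j-1})C_j$, which need not be bounded uniformly in $h$. For example, with $B\equiv1$ and $C(s)=s^{-1/2}$ the slope on $J_0$ is $C_0=2h^{-1/2}$. (Also, $A_h$ on $J_n$ is the value of $B_h*C_h$ at $(n+1)h$, not at $nh$.)

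Your three ingredients are exactly what closes this step: the explicit quadrature error, the discrete Young bound $\tfrac12\|B_h-\tau_hB_h\|_{L^p}\|C_h\|_{L^1}$, and the transfer of $L^p$-continuity of translations from $B$ to $B_h$. The same bound controls the paper's term directly. Writing $t=(n+\theta)h$, one has on $J_n$
\[
A_h-B_h*C_h=(1-\theta)\,h\sum_j(B_{n-j}-B_{n-j-1})C_j,
\]
so the factor becomes $(p+1)^{-1/p}$ instead of $\tfrac12$, and the averaging operator $\Pi_h$ is optional. The paper's route is shorter; yours is an actual proof, at the cost of needing $p<\infty$, which is the range used in Theorem~\ref{thm:limit_sweeping.main} anyway.

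\textbf{One slip to fix.} The term created by the convention $B_{-1}=0$ comes from $j=n$. It is $\tfrac h2 B_0C_n$ on $J_n$ and appears in every cell, not $\tfrac h2 B_nC_0$ in the first cell. Your bound also fails for the correct term: estimating it by $\tfrac h2|B_0|\,\|C_h\|_{L^p}$ does not work, because $C_h$ is only bounded in $L^1$.

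No separate treatment is needed. Define $\tau_hB_h$ with zero extension and let $\bar B$ be the zero extension of $B$. Then $\|B_h-\tau_hB_h\|_{L^p(0,T)}$ already contains the cell $J_0$, and
\[
\|B_h-\tau_hB_h\|_{L^p(0,T)}\le 2\|B_h-B\|_{L^p(0,T)}+\|\bar B-\bar B(\cdot-h)\|_{L^p(\mathbb R)}\to0 .
\]
If you want to isolate that term anyway, apply Young with $B$ in $L^p$ and $C$ in $L^1$. This gives $\tfrac12\|B_h\|_{L^p(0,h)}\|C_h\|_{L^1}\to0$.
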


\begin{proof}
The proof proceeds in two main steps: identifying $A_h$ as a projection of a continuous convolution, and applying Young's Inequality.

\paragraph{Step 1: Identification of the Operator}
For piecewise constant functions $B_h$ and $C_h$, the continuous convolution $(B_h * C_h)(t)$ evaluated at a grid point $t = nh$ is:
\[ (B_h * C_h)(nh) = \int_0^{nh} B_h(nh-s)C_h(s)ds = \sum_{j=0}^{n-1} \int_{jh}^{(j+1)h} B_h(nh-s)C_h(s)ds \]
Since $B_h$ and $C_h$ are constant on each interval $J_j$, the integral becomes $h B_{n-1-j} C_j$. Thus, the discrete sum in $A_h$ corresponds exactly to the values of the continuous convolution of the piecewise constant approximations at the nodes. Specifically, $A_h$ is the piecewise constant interpolant of $(B_h * C_h)$.

\paragraph{Step 2: Decomposition of the Error}
By the triangle inequality in $L^p(0,T)$:
\[ \|A_h - B * C\|_{L^p} \leq \|A_h - B_h * C_h\|_{L^p} + \|B_h * C_h - B * C\|_{L^p} \]

The first term $\|A_h - B_h * C_h\|_{L^p}$ vanishes as $h \to 0$ because $B_h * C_h$ is a continuous (piecewise linear) function, and $A_h$ is its piecewise constant approximation. 

For the second term, we apply Young's Inequality for convolutions ($\|f * g\|_{L^p} \leq \|f\|_{L^p}\|g\|_{L^1}$):
\[ \|B_h * C_h - B * C\|_{L^p} \leq \|(B_h - B) * C_h\|_{L^p} + \|B * (C_h - C)\|_{L^p} \]
\[ \leq \|B_h - B\|_{L^p}\|C_h\|_{L^1} + \|B\|_{L^p}\|C_h - C\|_{L^1} \]

Since $B_h \to B$ in $L^p$, the term $\|B_h - B\|_{L^p} \to 0$. Since $C_h \to C$ in $L^1$, the term $\|C_h - C\|_{L^1} \to 0$. By the stability of strong convergence, $\|C_h\|_{L^1}$ is uniformly bounded. Therefore, both terms approach zero.
And the claim follows.
\end{proof}

\section{Decay of decreasing functions in weighted \(L^1\) spaces}\label{sec:decay_lipschitz}
\begin{proposition}\label{prop:decay_weighted}
Let $\kernel: \mathbb{R}_+ \to \mathbb{R}$ be a monotone non increasing non-negative function such that 
$\kernel \in L^1(\mathbb{R}_+, (1+a)^2)$. 
Then $\lim_{a \to \infty} \kernel(a)(1+a)^2 = 0$.
\end{proposition}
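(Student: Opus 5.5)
The approach is to combine monotonicity with the integrability of $\kernel(a)(1+a)^2$ through a tail-integral (Cauchy-type) argument. For any $a>0$, since $\kernel$ is non-increasing and non-negative, on the interval $[a/2,a]$ one has $\kernel(s)\ge \kernel(a)$. I will use this to bound $\kernel(a)$ from above by an average of $\kernel$ over that interval.

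The key inequality comes from integrating the weighted kernel over $[a/2,a]$:
\[
\int_{a/2}^{a} \kernel(s)(1+s)^2\,ds \;\ge\; \kernel(a)\int_{a/2}^{a}(1+s)^2\,ds \;\ge\; \kernel(a)\,\frac{a}{2}\,\Bigl(1+\frac a2\Bigr)^2 .
\]
For $a\ge 2$ we have $\frac a2(1+\frac a2)^2 \ge c\,(1+a)^2\cdot\frac{a}{2}\cdot\frac14 \ge c'(1+a)^2$ with an absolute constant $c'>0$, using $1+\frac a2\ge \frac12(1+a)$. Dividing by $c'$ then gives
\[
\kernel(a)(1+a)^2 \;\le\; \frac{1}{c'}\int_{a/2}^{\infty}\kernel(s)(1+s)^2\,ds .
\]
The right-hand side is the tail of a convergent integral, because $\kernel\in L^1(\mathbb R_+,(1+a)^2)$. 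By dominated convergence, or simply by the definition of a convergent improper integral of a non-negative function, this tail tends to $0$ as $a\to\infty$. Hence $\lim_{a\to\infty}\kernel(a)(1+a)^2=0$.

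I don't expect a real obstacle. The only care needed is the elementary constant in the lower bound $\int_{a/2}^a(1+s)^2ds\gtrsim a(1+a)^2$ for large $a$. I will track this crudely: since $(1+s)^2\ge \frac14(1+a)^2$ on $[a/2,a]$, the integral is at least $\frac a8(1+a)^2\ge \frac14(1+a)^2$ for $a\ge 2$. This in fact gives the stronger decay $\kernel(a)(1+a)^2=o(1/a)$, which is more than the claim requires.
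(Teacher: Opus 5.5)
Your proof is correct and is essentially the paper's argument: monotonicity bounds $\kernel(a)(1+a)^2$ by a multiple of the weighted integral of $\kernel$ over a window to the left of $a$, and integrability of $\kernel(a)(1+a)^2$ makes those window integrals vanish. The only difference is the window. The paper uses fixed-width cells $[(n-1)h,nh]$, while your dilating window $[a/2,a]$ gives explicit constants valid for all $a\ge 2$ and the stronger rate $\kernel(a)(1+a)^2=o(1/a)$.
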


\begin{proof}
	Let $f(a) = \kernel(a)(1+a)^2$. By hypothesis, $f \in L^1(\mathbb{R}_+)$.
	Denote $f^n:= \frac{1}{h} \int_{nh}^{(n+1)h} f(a) da$ for $n \in \mathbb{N}$ and $h > 0$.
	Since $f$ is non-negative and integrable, we have $\lim_{n \to \infty} f^n = 0$.
	Moreover, since $\kernel$ is non-increasing,
	\[
	f^{n-1}\geq \kernel(nh) \frac{1}{h}  \int_{(n-1)h}^{nh} (1+a)^2 da \geq  
	\kernel(nh) (nh)^2.
	\]
	for any $n \geq 1$. Thus one has:
	$$
	(1+a)^2 \kernel(a) \leq 2 f^{n-1} \quad \text{ for } a \in [nh, (n+1)h]
	$$
	which shows the desired limit by letting $n \to \infty$.
\end{proof}

\bibliographystyle{spmpsci}
\bibliography{biblio}

\end{document}